\documentclass[reqno,a4paper,11pt]{amsart}

\setlength{\textwidth}{6.5in}
\setlength{\oddsidemargin}{-0.15in}
\setlength{\evensidemargin}{-0.15in}

\usepackage{amsfonts,amsmath,amsthm,amssymb,amscd}
\usepackage{amsthm}
\usepackage{comment}

\usepackage{chngcntr}
\usepackage{nccmath}

\usepackage[mathscr]{euscript}

\usepackage{frcursive}
\usepackage[T1]{fontenc}  

\usepackage{helvet}

\usepackage{anyfontsize}

\usepackage{epsf}

\usepackage{graphicx,psfrag}
\usepackage{tabularx}
\usepackage{multicol}
\usepackage{color}

\usepackage{mathtools}
\DeclarePairedDelimiter{\ceil}{\lceil}{\rceil}

\usepackage{scalerel,stackengine}
\stackMath
\newcommand\reallywidehat[1]{%
\savestack{\tmpbox}{\stretchto{%
  \scaleto{%
    \scalerel*[\widthof{\ensuremath{#1}}]{\kern-.6pt\bigwedge\kern-.6pt}%
    {\rule[-\textheight/2]{1ex}{\textheight}}
  }{\textheight}%
}{0.5ex}}%
\stackon[1pt]{#1}{\tmpbox}%
}

\newcommand{\verteq}{\rotatebox{90}{$\,=$}}


\font\sans=cmss12

\font\sc=cmcsc10 at 12truept 
\font\tsc=cmcsc8 at 9truept

\def \SaK{\text{\sans K}}

\def \ScptA{\mathscr{A}}
\def \ScptB{\mathscr{B}}
\def \ScptC{\mathscr{C}}

\def \ScptG{\mathscr{G}}

\def \Ak{\mathscr{A}}

\def \Gk{\mathscr{G}}
\def \Hk{\mathscr{H}}

\def \Jk{\mathscr{J}}

\def \Lk{\mathscr{L}}
\def \Mk{\mathscr{M}}

\def \Sk{\mathscr{S}}

\def \Uk{\mathscr{U}}

\def \Xk{\mathscr{X}}

\def \SamS{\text{{\fontfamily{phv}\selectfont{S}}}}

\def \mG{\text{\rm{\bf{G}}}}

\def \mH{\text{\rm{\bf{H}}}}

\def \mS{\text{\rm{\bf{S}}}}

\def \mU{\text{\rm{\bf{U}}}}

\def \mk{k}
\def \mkun{k^{un}}

\def \myAff{\text{\rm{Aff}}}

\def \Ad{\text{\rm{Ad}}}

\def \myAd{\text{\rm{Ad}}}
\def \myBall{\text{\rm{Ball}}}
\def \myblue{black}

\def \mychild{\text{\rm{c}}}
\def \my,{\hskip 0.02in}

\def \mydistD{\text{\rm{$\mathbb D$}}}

\def \myFacet{\text{\rm{Facet}}}

\def \myGal{\text{\rm{Gal}}}
\def \mygrad{\text{\rm{grad}}}
\def \myht{\text{\rm{ht}}}
\def \mycInd{\text{\rm{c-Ind}}}
\def \myInd{\text{\rm{Ind}}}

\def \myId{\text{\rm{Id}}}

\def \myint{\text{\rm{int}}}
\def \myOrb{\text{\rm{Orb}}}
\def \myparent{\text{\rm{p}}}

\def \myrank{\text{\rm{rank}}}

\def \mysph{\text{\rm{Sph}}}
\def \myTau{\text{\rm{T}}}
\def \myun{\text{\rm{un}}}

\def \myVsph{\text{\rm{VSph}}}

\def \Cic{{C^{\infty}_{c}}}

\def \Hom{\text{\rm{Hom}}}

\def \Lie{\text{\rm{Lie}}}
\def \meas{\text{\rm{meas}}}
\def \SL{\text{\rm{SL}}}

\def \myAd{\text{\rm{Ad}}}

\def \mysubsubsection{{\hskip 0.0in}}

\def \myInd#1#2{\text{\rm{Ind}}{\hskip 0.005in}^{#1}_{#2}}
\def \mycInd#1#2{\text{\rm{c-Ind}}{\hskip 0.005in}^{#1}_{#2}}

\def \mystar{\text{\rm{Star}}}

\def \bC{{\mathbb C}}

\def \bF{{\mathbb F}}

\def \bN{{\mathbb N}}

\def \bR{{\mathbb R}}

\def \bZ{{\mathbb Z}}

\def \fkg{{\mathfrak g}}

\def \myFFsl2{{\mathfrak s}{\mathfrak l}  (2,{\mathbb F}_{q})}
\def \myFFGL2{{\text{\rm{GL}}}  (2,{\mathbb F}_{q})}
\def \myFFSL2{{\text{\rm{SL}}}  (2,{\mathbb F}_{q})}

\def \myauthor{Allen Moy and Gordan Savin}

\catcode`\@=11

\@addtoreset{equation}{section} \@addtoreset{equation}{subsection}
\def\theequation{\ifnum\value{subsection}>0\relax
\thesubsection.\arabic{equation}\relax
\else\ifnum\value{section}>0\relax
\thesection.\arabic{equation}\relax \else\arabic{equation}\fi\fi}

\newtheorem{thm}[equation]{Theorem}
\newtheorem{lemma}[equation]{Lemma}
\newtheorem{prop}[equation]{Proposition}

\newtheorem*{thm*}{Theorem}
\newtheorem*{prop*}{Proposition}

\newtheorem{cor}[equation]{Corollary}

\newtheorem{defn}[equation]{Definition}





\newcommand \reBna{B}

\newcommand \reBCM{BCM}

\newcommand \reBD{BD}
\newcommand \reBS{BS}
\newcommand \reBKV{BKV}

\newcommand \reBTa{BTa}
\newcommand \reBTb{BTb}

\newcommand \reHCb{HC}

\newcommand \reKea{K}

\newcommand \reMS{MS}

\newcommand \reMPa{MPa}
\newcommand \reMPb{MPb}

\newcommand \reSSa{SS}

\newcommand \reTa{T}


\begin{document}
 
{\large{
 


\vskip 0.30in

\title[Euler-Poincar{\'{e}} formulae for positive depth Bernstein projectors]  
{Euler-Poincar{\'{e}} formulae for positive depth \\  Bernstein projectors}
\markboth{\myauthor}{Euler-Poincar{\'{e}} formulas for positive depth Bernstein projectors}

\author{\myauthor}

\thanks{The first author is partly supported by Hong Kong Research Grants Council grant CERG  {\#}16301915 and {\#}160301718.}

\thanks{The second author is partly supported by NSF grant DMS-1901745.}

\subjclass{Primary  22E50, 22E35}

\keywords{Bernstein center, Bernstein projector, Bruhat--Tits building,  depth, distribution, equivariant system, essentially compact, Euler--Poincar{\'{e}}, idempotent, resolution}

\begin{abstract}

Work of Bezrukavnikov--Kazhdan--Varshavsky uses an equivariant system of trivial idempotents of Moy--Prasad groups to obtain an Euler--Poincar{\'{e}} formula for the r-depth Bernstein projector.  Barbasch--Ciubotaru--Moy use depth-zero cuspidal representations of parahoric subgroups to decompose the Euler--Poincar{\'{e}} presentation of the depth-zero projector.  For positive depth $r$, we establish a decomposition of the Euler--Poincar{\'{e}} presentation of the r-depth Bernstein projector based on a notion of associate classes of cuspidal pairs for Moy--Prasad quotients.  We apply these new Euler--Poincar{\'{e}} presentations to the obtain decompositions of the resolutions of Schneider--Stuhler and Bestvina--Savin.

\end{abstract}

 
\vskip 0.30in 
\maketitle 
 

\section{Introduction}\label{intro-ms}

\medskip

Suppose $\mk$ is a non-archimedean local field and $\mG$ is a reductive group defined over $\mk$.  Let $\Gk = \mG ( \mk )$ denote the group of $\mk$-rational points.  Let $\Omega (\Gk )$ denote the category of smooth (complex) representations of $\Gk$.  Bernstein (see [\reBna]) used parabolic induction to define natural full subcategories $\Omega ( \{ ( \Mk , \sigma ) \} )$ of $\Omega (\Gk )$ indexed by equivalence classes $\{ ( \Mk , \sigma ) \}$ of cuspidal data consisting of a Levi subgroup $\Mk$ (of $\Gk$) and an irreducible cuspidal representation of $\Mk$.  The cuspidal data equivalence is $(\Mk , \sigma ) \, \sim \, ({\Mk}' , \sigma ' )$, if  (i) there exists $g \in \Gk$, and (ii) there exists an unramified character $\chi$ of $\Mk$  so that ${\Mk}' = \myAd (g) ({\Mk})$ and $\sigma '$ and $(\sigma  \otimes \chi ) \circ \myAd (g)$ are equivalent.   These subcategories are called Bernstein components.   Furthermore (see [\reBD]) , to a Bernstein component $\Omega ( \{ ( {\Mk} , \sigma ) \} )$, there is an unique essentially compact $\Gk$-invariant distribution $P_{\Omega ( \{ ( {\Mk}  , \sigma ) \} )}$, i.e., an element of the Bernstein center,  with the property that: \quad
\smallskip
\begin{itemize}
\item[(i)] \ $P_{\Omega ( \{ ( {\Mk}  , \sigma ) \} )}$ is idempotent, \ 
\smallskip
\item[(ii)] \ for any smooth representation $(\pi ,V_{\pi} )$, the subrepresentation $\pi (P_{\Omega ( \{ ( {\Mk}  , \sigma ) \} )} ) (V_{\pi})$ lies in $\Omega ( \{ ( {\Mk} , \sigma ) \} )$, and \ 
\smallskip
\item[(iii)] \ if $(\pi , v_{\pi})$ is a smooth representation in $\Omega ( \{ (M , \sigma ) \} )$, then $\pi (P_{\Omega ( \{ ( {\Mk} , \sigma ) \} )} ) = \myId_{V_{\pi}}$.
\end{itemize}

\noindent The nomenclature Bernstein projector is used for such a distribution.

\medskip

As a consequence of the work [{\reMPa},{\reMPb}] of Moy-Prasad on unrefined minimal $\SaK$-types, to any irreducible representation $(\pi , V_{\pi})$ of $\Gk$, there is a nonnegative rational number $\rho (\pi )$ (called the depth of $\pi$).  It can be characterized as the smallest rational number $r$ so that exists $x \in \ScptB (\Gk )$ so that $V^{\Gk_{x,r^{+}}}_{\pi} \neq \{ 0 \}$ (and the action of $\Gk_{x,r}/\Gk_{x,r^{+}}$ on $V^{\Gk_{x,r^{+}}}_{\pi} \neq \{ 0 \}$ can be shown to be nondegenerate).   The depth is the same for all irreducible representations in a Bernstein component $\Omega = \Omega ( \{ (M , \sigma ) \} )$. We use the notation $\rho (\Omega )$ to denote this rational number.   The $r$-th depth projector $P_{\le r }$ is defined to be (the finite sum): 

\begin{equation}\label{depth-ler-projector-intro-ms}
P_{\le r} \ : = \ {\underset { \rho (\Omega ( \{ ( {\Mk} , \sigma ) \} ) ) \le r} \sum }  P_{\Omega ( \{ ( {\Mk} , \sigma ) \} )}  \ .
\end{equation}

If $N \in \bN_{+}$, and $r \in {\frac{1}{N}} \bN$,  Bezrukavnikov--Kazhdan--Varshavsky [{\reBKV}] replace the usual simplicial structure on the Bruhat--Tits building $\ScptB = \ScptB (\Gk )$, with a refinement.  For ease of exposition, we assume $\Gk$ is split and therefore $\ScptB (\Gk )$ has a hyperspecial point.  The new structure, which we denote as $\ScptB_{N}$, is obtained by taking each affine apartment $\ScptA \subset \ScptB$ and refining the simplicial structure on $\ScptA$.  One selects a hyperspecial point as an origin and then scales the locus of affine root hyperplanes which define the original simplicial structure on $\ScptA$ by a factor of ${\frac{1}{N}}$.  The resulting refined simplicial structure on $\ScptA$ is denoted $\ScptA_{N}$ and is independent of the hyperspecial point selected.    An important feature of this refined simplicial structure is that the Moy-Prasad groups $\Gk_{x,r^{+}}$ are constant on the interior of any facet of $F \subset \ScptB_{N}$.  Denote this group as $\Gk_{F,r^{+}}$.   Fix a Haar measure on $\Gk$, and define the $\Cic (\Gk )$ idempotent:
\begin{equation}\label{idempotent-intro-ms}
e^{r}_{F} \ := \ {\frac{1}{\meas (\Gk_{F,r^{+}} )}} \ 1_{\Gk_{F,r^{+}}} \ . 
\end{equation}

\smallskip
\noindent Inspired by work of Meyer--Solleveld in [{\reMS}], Bezrukavnikov--Kazhdan--Varshavsky use $\ScptB_{N}$ to give a presentation of the distribution $P_{\le r}$ as the Euler--Poincar{\'{e}} sum:

\smallskip

\begin{equation}\label{bkv-intro-ms}
P_{\le r} \ = \ {\underset {F \subset \ScptB_{N}} \sum } \ (-1)^{\dim (F) } \, e^{r}_{F} \ .
\end{equation}

\smallskip

A natural question to ask is whether there are similar Euler-Poincar{\'{e}} presentations for other linear combinations of Bernstein projectors.  An extreme case would be to ask if there is an Euler-Poincar{\'{e}} presentation for an individual projector $P_{\Omega ( \{ ( {\Mk} , \sigma ) \} )}$.  Evidence of the latter is the work in [{\reBCM}].  We recall the building $\ScptB (\Mk )$ of a Levi subgroup $\Mk$ is the union of the apartments $\ScptA (\Sk ) \subset \ScptB (\Gk )$ as $\Sk$ runs over the maximal split tori in $\Mk$, and that for $x \in \ScptB (\Mk )$ we have $\Mk_{x,r} = (\Gk_{x,r} \cap \Mk)$.   When the class $\{ ( {\Mk} , \sigma ) \}$ has depth zero, it is a consequence of work in [{\reMPb}] that there is a facet $F \subset \ScptB$ so that 
\smallskip
\begin{itemize}
\item[(i)] \ $F \subset \ScptB ( \Mk ) \subset \ScptB (\Gk )$  (hence that $\Gk_{F,0}/\Gk_{F,0^{+}} = \Mk_{F,0} / \Mk_{F,0^{+}}$).
\smallskip
\item[(ii)] a cuspidal representation $\tau$ of the finite field group $\Mk_{F,0} / \Mk_{F,0^{+}}$ so that the cuspidal representation $\sigma$ contains $\tau$ (inflated to $\Mk_{F,0}$).  
\end{itemize}
\smallskip
\noindent Such a pair $(F, \tau )$ is called a cuspidal pair.  As indicated in [{\reMPb}], uniqueness of the pair $(F, \tau )$ is up to the equivalence of associate pairs, which we now recall.  Two facets $F$ and $F'$ are associate if $\dim (F) = \dim (F')$ and there exists $g \in \Gk$ so that the convex facet closure $C(F,g.F')$ of $F$ and $g.F'$ also has dimension this common dimension.  We say the facets $F$ and $g.F'$ are aligned.  We define two cuspidal pairs $(F , \tau )$ and $(F' , \tau ')$ to be associate if they satisfy the following:  
\begin{itemize} 
\item[(i)] \ The facets $F$ and $F'$ are associate, e.g., assume $F$ and $g.F'$ are aligned.   The alignment of $F$ and $g.F$ yields a canonical identification of $\Gk_{F,0}/\Gk_{F,0^{+}}$ and  $\Gk_{g.F',0}/\Gk_{g.F',0^{+}}$.  
\smallskip
\item[(ii)] \ The element $g$ in part (i) can be taken so that  $\tau \circ \myAd (g)$ is to be equivalent to $\tau '$.
\end{itemize}

\smallskip

\noindent A cuspidal pair $(F,\tau )$ defines an associate class ${\mathcal C} = \{ (F,\tau ) \}$, and to the associate class ${\mathcal C}$ there is an $\Gk$-equivariant system of idempotents $e^{\my, \mathcal C}_{K}$ ($K$ a facet in $\ScptB (\Gk )$) so that
 
\begin{equation}\label{bcm-intro-ms}
P^{\my, \mathcal C}_{0} \ := \ {\underset {K \subset \ScptB} \sum } \ (-1)^{\dim (K) } \, e^{\my, \mathcal C}_{K} \ 
\end{equation} 

\noindent is an Euler--Poincar{\'{e}} presentation of the sum {\,}$\big( \, {\sum}' \, P_{\Omega ( \{ ( {\Lk} , \kappa ) \} )} \, \big)${\,} over the classes  $\{ ( {\Lk} , \kappa ) \}$ which contain a pair $(\Mk , {\theta})$ so that the restriction $\theta_{{|}\Mk_{F,0}}$ contains $\tau$.  We caution the double use of notation (which should be clear from context): \ the class $\{ ( {\Lk} , \kappa ) \}$ is that of a Levi subgroup $\Lk$ and a cuspidal representation $\kappa$ of $\Lk$, while class $\{ ( F , \tau ) \}$ is a facet $F$, and a cuspidal representation $\tau$ of $\Gk_{F,0}/\Gk_{F,0^{+}}$ inflated to $\Gk_{F,0}$.  

\medskip

A key result of [{\reBCM}] is the (orthogonal) decomposition:
\begin{equation}\label{hc-intro-ms}
P_{\le0} \ = \ {\underset 
{\text{\rm{\tiny $\begin{matrix} {\mathcal C}  \end{matrix}$}}} \sum} \  P^{\my, \mathcal C}_{0} \ .
\end{equation}
 
\noindent We view this decomposition of $P_{\le 0}$ as an embodiment of Harish-Chandra's philosophy of cusp forms [{\reHCb}] over finite field groups.  

\medskip

Our goal here is to show a partial analogue of \eqref {hc-intro-ms} for positive depth.  We assume $N \in \bN_{+}$ and $r \in {\frac{1}{N}}\bN_{+}$.  Let $\ScptB_{N}$ denote the Bruhat--Tits building with the refined simplicial structure mentioned above.  In addition to constancy of the groups $\Gk_{x,r^{+}}$ on the interior of a facet $F$, the groups $\Gk_{x,r}$ are constant too (we use the notations $\Gk_{F,r^{+}}$ and $\Gk_{F,r}$ for these groups), and if $E$ is a subfacet of $F$, then $\Gk_{E,r} \supset \Gk_{F,r}  \supset \Gk_{F,r^{+}} \supset \Gk_{E,r^{+}}$.  The condition $r > 0$ has the consequence that the quotient $\Gk_{F,r} / \Gk_{F,r^{+}}$ is commutative.

\bigskip


Let $F\supset E$ be a pair of facets in $\ScptB_{N}$. A facet $\bar F\supset E$ is called opposite to $F$ if there exists an apartment $\ScptA (\Sk )$ that contains $F$ and $\bar F$ is the reflection of $F$ about the affine subspace of $\ScptA (\Sk )$ generated by $E$. 
Two opposite facets $F, \, F'$ yield (see \eqref{iwahori-b-prelim-ms} in Proposition \ref{iwahori-a-prelim-ms}) 
 an Iwahori decomposition  of $\Gk_{E,r}/\Gk_{E,r^{+}}$ as:
\begin{equation}\label{iwahori-decompo-intro-ms}
\Gk_{E,r}/\Gk_{E,r^{+}} \ = \ \Gk_{\bar F,r^{+}}/\Gk_{E,r^{+}} \ \oplus \ \Gk_{F,r}/\Gk_{F,r^{+}} \ \oplus \ \Gk_{F,r^{+}}/\Gk_{E,r^{+}} \ .
\end{equation}

\noindent  Since $F$ and $\bar F$ are aligned, note that we have a canonical identification


\begin{equation}\label{opposite-intro-ms}
\Gk_{F,r}/\Gk_{F,r^{+}} \ =  \ \Gk_{\bar F,r}/\Gk_{\bar F,r^{+}} \ .
\end{equation}


\medskip

Let $E$ be a facet of $\ScptB_{N}$.  
In homage to the usual notions of parabolic induction and restriction, we define a character $\chi$ of $\Gk_{E,r}/\Gk_{E,r^{+}}$ to be cuspidal, if for any facet $F \supsetneq E$, the restriction of $\chi$ to the summand $\Gk_{F,r^{+}}/\Gk_{E,r^{+}}$ in \eqref{iwahori-decompo-intro-ms} is non-trivial.  
Propositions \ref{property-pontryagin-ms} and \ref{uniqueness-pontryagin-ms} say to any character $\chi$ of $\Gk_{E,r}/\Gk_{E,r^{+}}$, there exists a facet $F$ containing $E$ and a cuspidal character $\phi$ of $\Gk_{F,r}/\Gk_{F,r^{+}}$ so that the inflation of $\phi$ to $\Gk_{F,r}$ equals $\chi_{| \Gk_{F,r}}$; or, equivalently, that $\chi$ is contained in the induced representation  $\myInd{\Gk_{E,r}}{\Gk_{F,r}} \, \phi$.

\medskip

We define a cuspidal pair $(E, \chi )$ as consisting of a facet $E\subset \ScptB_{N}$ and a cuspidal character of $\Gk_{E,r} / \Gk_{Er^{+}}$.   For this definition, there is a precise analogue of the depth zero equivalence relation of associate cuspidal pairs.  As in the depth zero situation, we call the equivalence class of a cuspidal pair $(F, \chi)$, its associate class ${\mathcal C} = \{ {\,} (E', {\chi}' ) {\,} | {\,} (E', {\chi}' ) \sim  (E, {\chi} ) {\,} \}$.  To the associate class ${\mathcal C}$, in a fashion analogous to the depth zero situation, there is an $\Gk$-equivariant system of idempotents $e^{\my, \mathcal C}_{J}$ ($J$ a facet in $\ScptB_{N}$).  The support of the equivariant system is on the facets which are contained in a facet of the associate class.  Our main Theorem (Theorem \ref{main-convolve-ms}) is that the Euler-Poincar{\'{e}} sum 
\begin{equation}\label{ep-sum-intro-ms}
P^{\my, \mathcal C} \ := \ {\underset {J \subset \ScptB_{N}} \sum } \ (-1)^{\dim (J) } \, e^{\my, \mathcal C}_{J} \ 
\end{equation}

\noindent is a distribution in the Bernstein center and idempotent (hence a finite sum of Bernstein projectors), and the depth $r$ project $P_{\le r}$ has the decomposition 
\begin{equation}\label{ms-intro-ms}
P_{\le r} \ = \ {\underset {\mathcal C} \sum} \ P^{\my, \mathcal C} \ .
\end{equation} 

\noindent We note that there is an associate class ${\mathcal N}$ associated to cuspidal pairs of the type $( C , \chi_{\text{\rm{triv}}} )$, where $C$ is a chamber in $\ScptB_{N}$, and that (in the obvious notation) $P_{< r} = P^{\my, {\mathcal N}}$.  Let $r_{0}$ to be the largest depth of a representation of $\Gk$ which is strictly less than $r$.  Then $P_{<r} = P_{\le r_{0}}$.  The Euler--Poincar{\'{e}} presentation $P^{\my, {\mathcal N}}$ of $P_{<r}$, and the Euler--Poincar{\'{e}} presentation \eqref{bkv-intro-ms} of $P_{\le r_{0}}$ are two presentations of the same distribution.  It would be interesting to have a direct proof of this connection.

\medskip

For respectively the Bruhat-Tits building $\ScptB$ and the refined simplicial structure $\ScptB_{N}$, the idempotents $e^{r}_{J}$ appear in the work of Schneider--Stuhler and  Bestvina--Savin on resolutions of a smooth representation $(\pi , V_{\pi})$.  Indeed, for $r \in \bN$, Schneider--Stuhler independently discovered the groups $\Gk_{F,r}$ and used the notation $U^{r}_{F}$ for these groups.  Both  Schneider--Stuhler and Bestvina--Savin assemble the vector spaces $\pi (e^{r}_{J}) (V_{\pi})$ into a smooth representation of $\Gk$ 
\begin{equation}\label{resolution-b-intro-ms}
W^{r,k}_{\pi} \ = \ {\underset {\text{\rm{\tiny $\begin{matrix} {J \subset {\ScptB_{N}} } \\ {\dim ( J ) = k} \end{matrix}$ }}} \bigoplus } (J , \pi (e^{r}_{J}) (V_{\pi}))
\end{equation}
\noindent which is projective.  They define natural  boundary maps $W^{r,k}  \xrightarrow{ \ \ \partial \ \ } W^{r,(k-1)}$ and an augmentation map $W^{r,0} \longrightarrow V_{\pi}$ so that if $V_{\pi}$ is generated by the image of the augmentation map, then the resulting complex is a resolution of $V_{\pi}$.   

\medskip

For an associate class ${\mathcal C} = \{ (F , \chi ) \}$, a key property of the idempotent distribution $P^{\my, \mathcal C}$ (see Theorem \ref{main-convolve-ms} part (iii)) is that 
$$
P^{\my, \mathcal C} \ \star \ e^{r}_{J} \ = \ e^{\my, \mathcal C}_{J} \qquad {\text{\rm{for any facet $J \, \subset \, \ScptB_{N}$}}} \ . 
$$
A consequence of this is that when $P^{\my, \mathcal C}$ is applied to a resolution of Schneider--Stuhler or of Bestvina--Savin, we obtain
$$
\pi (P^{\my, \mathcal C}) \ \big( \, \pi (e^{r}_{J}) (V_{\pi}) \, \big) \ = \  \pi (e^{\my, \mathcal C}_{J}) (V_{\pi}) \ .
$$
\noindent Thus, if we define $W^{{\my, \mathcal C},k}$ as in \eqref{resolution-b-intro-ms} by replacing $\pi (e^{r}_{J}) (V_{\pi})$ with $\pi (e^{\my, \mathcal C}_{J}) (V_{\pi})$, we obtain a resolution of $\pi (P^{\my, \mathcal C})(V_{\pi})$.

\medskip

We outline our presentation of results.  In section \ref{prelim-ms}, we introduce notation and the simplicial refinement $\ScptB_{N}$ ($N \in \bN$) of the Bruhat--Tits building $\ScptB$.  If $E$ and $E'$ are facets of $\ScptB_{N}$, we introduce the set $C(E,F)$ which is the convex facet closure of $E$ and $F$.  The set $C(E,F)$ is used to defined the notion of aligned facets and associate facets.  It is essential in formulating the important convolution result Proposition \ref{convolution-prelim-ms}. 

\medskip

In section \ref{pontryagin-ms} we define cuspidal characters of the quotient groups $\Gk_{F,r}/\Gk_{F,r^{+}}$ (and their inflations to   $\Gk_{F,r}$), and establish that if $\chi$ is a character of $\Gk_{E,r}/\Gk_{E,r^{+}}$ then there is a facet $F \supseteq E$ and a cuspidal character $\phi$ of $\Gk_{F,r}/\Gk_{F,r^{+}}$, so that $\chi_{|\Gk_{F,r}}$ is the inflation of $\phi$.  Furthermore if the same is true for another cuspidal pair $(F',{\phi}')$, then it is associate to $(F,\phi )$.  We also establish useful convolution properties of cuspidal characters.  We end the section defining the $\Gk$-equivariant system of idempotents $e^{\my, \mathcal C}_{J}$ ($J$ a facet of $\ScptB_{N}$) associated to an associate class ${\mathcal C}$ of a cuspidal pair.

\medskip
  
In section \ref{ep-ms}, we fix a chamber $C \subset \ScptB_{N}$ and establish formulae for the Euler--Poincar{\'{e}} sums over certain subfacets of $C$.  In section \ref{keyprop-ms}, we fix a base chamber $C_{0} \subset \ScptB_{N}$ and  establish Proposition \ref{proposition-keyprop-ms} which is key to obtaining what we believe to be an elegant proof that the Euler--Poincar{\'{e}} sum \eqref{ep-sum-intro-ms} is in the Bernstein center.  In section \ref{convolve-ms},
we prove our main Theorem  \ref{main-convolve-ms} about the distributions $P^{\my, \mathcal C}$ of \eqref{ep-sum-intro-ms}.  In section \ref{resolutions-ms}, we apply the distributions $P^{\my, \mathcal C}$ to obtain decompositions of the resolutions (see [{\reSSa},{\reBS}]) of Schneider--Stuhler and Bestvina--Savin.
In section \ref{nonsplit}, we exposit the change necessary in the definition of virtual affine roots to extend the split arguments to nonsplit groups.

\medskip


\vskip 0.70in 
 

\section{Refined simplicial structure on a Bruhat--Tits building}\label{prelim-ms}

\bigskip

\subsection{Notation and Preliminaries} \quad 

\medskip

As in the introduction, we assume $\mk$ is a non-archimedean local field.  We denote by ${\mathfrak O}_{\mk}$, $\wp_{\mk}$, and $\bF_{q} = {\mathfrak O}_{\mk}/\wp_{\mk}$ respectively, the ring of integers, prime ideal, and residue field of $\mk$.   We use similar notation for a maximal unramified extension ${\mkun}/{\mk}$.   The residue field ${\mathfrak O}^{\myun}_{\mk}/\wp^{\myun}_{\mk}$ is an algebraic closure of $\bF_{q}$ and we denote it as ${\overline{\bF_{q}}}$. \ Let $\mG$ be a connected reductive linear algebraic group defined over $\mk$.   For convenience, we assume $\mG$ is $\mk$-split and quasisimple.   Set $\ell = \myrank (\mG)$.  If $\mH$ is a $\mk$-subgroup of $\mG$, we write $\Hk$ for the group $\mH (\mk )$ of $\mk$-rational points of $\mH$, e.g., $\Gk = \mG (\mk )$.   We set $\Gk^{\myun} := \mG (\mkun )$, and $\Gk := \mG (\mk )$.

\medskip

Let $\mS$ be a maximal $\mkun$-split torus of $\mG$ (so, by definition, $\Sk^{\myun} = \mS (\mkun )$), and let (see [{\reTa},{\reBTa},{\reBTb}]) $\Ak (\Sk^{\myun} )$ be the apartment associated to $\Sk^{\myun}  \subset \Gk^{\myun}$.  Let 
\begin{equation}\label{roots-prelim-ms}
\aligned
\Phi (\mS ) \ := \ &{\text{\rm{the set roots of $\mG$ with respect to $\mS$}}} \ , \\
\Psi (\mS ) \ := \ &\{ \ \alpha + k \ | \ \alpha \, \in \, (\Phi \cup \{ 0 \} ) \ , \ k \in \bZ \ \} \\ 
&\ {\text{\rm{the set of affine roots (with respect to $\mS$)}}} \ .  
\endaligned
\end{equation}

\noindent  When clear, we abbreviate these two sets to $\Phi$ and $\Psi$ (as well as $\Phi (\Sk )$ and $\Psi (\Sk )$) respectively.  To a root $\alpha \in \Phi$ (resp.~affine root $\psi \in \Psi$), let $\Uk_{\alpha}$ (resp.~$\Xk_{\psi}$) denote the attached root group (resp.~affine root group).

\medskip

When $\xi$ is a nonconstant affine function on $\Ak(\Sk^{\myun} )$, we define 
\begin{equation}\label{hyperplane-defn-prelim-ms}
H_\xi \ := \ {\text{\rm{the zero hyperplane of $\xi$}}} .
\end{equation}
\noindent We recall that an affine root hyperplane is a hyperplane $H_{\psi}$ attached to a nonconstant affine root $\psi \in \Psi$. \ Our hypotheses on $\mG$ ($\mk$-split, quasisimple) and $\mS$ (maximal $\mkun$-split torus) means the apartment $\Ak ( \mkun )$ is a simplicial complexes under the decomposition by the zero hyperplanes of the nonconstant affine roots.   The Bruhat-Tits building  $\ScptB (\Gk^{\myun} )$ of $\Gk^{\myun}$ is obtained by gluing apartments $\Ak ( \Sk^{\myun}  )$ ($\mS$ running over all $\mkun$-split tori) along (convex) sets of subsimplicies.  Thus, $\ScptB ( \Gk^{\myun} )$ is a simplicial complex and metric space, with a simplicial and isometrical action of  $\Gk^{\myun}$.    The Galois group ${\text{\rm{Gal}}}(\mkun / \mk )$ acts on $\Gk^{\myun}$  with fixed points $(\Gk^{\myun})^{{\text{\rm{Gal}}}(\mkun / \mk )}$ equal to $\Gk$. \ So, $\Gk$ acts on the Bruhat-Tits building:
\begin{equation}\label{building-prelim-ms}
\ScptB (\Gk) := \ScptB (\Gk^{\myun})^{{\text{\rm{Gal}}}(\mkun / \mk )} \ . 
\end{equation}
\noindent In fact, $\ScptB (\Gk )$ can be obtained by gluing apartments  $\Ak (\Sk^{\myun} )$ of maximal $\mk$-split tori $\mS$.  We conveniently denote such an apartment also as $\ScptA (\Sk )$.

\medskip

The group $\Gk^{\myun}$ (resp.~$\Gk$) acts transitively on the chambers, i.e., $\ell$-simplicies, of $\ScptB (\Gk^{\myun})$ (resp.~$\ScptB (\Gk)$).  The choice of a hyperspecial point $x_{0} \in \ScptA =  \ScptA (\Sk )$ corresponds to the choice of a Chevalley basis for the Lie algebra $\fkg$ of $\Gk$.   Such a choice gives an identification of $\ScptA (\Sk )$ with $\Hom (\mG_{m} , \mS ) \otimes_{\bZ} \bR$ whereby the point $x_{0}$ becomes $0$.

\bigskip

\subsection{Simplicial refinement by $N \in \bN_{+}$}  \quad 

\medskip

\mysubsubsection For {\,}$N \in \bN_{+}$, the identification of $\ScptA (\Sk )$ with $\Hom (\mG_{m} , \mS ) \otimes_{\bZ} \bR$ allows us to scale the simplicial structure on {\,}$\ScptA (\Sk )${\,} by a factor of {\,}$\frac{1}{N}$.  Each facet of $\ScptA (\Sk )$ is a union of facets of the new simplicial structure.  So, the new simplicial structure is a refinement of the original structure.  We use the notation $\ScptA (\Sk )_{N}$ to denote $\ScptA (\Sk )$ with this refined simplicial structure.

\medskip

\noindent Equivalently, $\ScptA (\Sk )_{N}$ can be described as the refined simplicial structure on $\ScptA (\Sk )$ arising from the zero hyperplanes of the set of virtual affine roots defined as:

\vskip -0.10in

\begin{equation}\label{virtualn-prelim-ms}
\Psi (\Sk )_{N} \, := \, \left\{ \
\begin{matrix}
\alpha + \ell \\
\alpha \in \Phi \ , \ \ell \in {\frac{1}{N}} \, \bZ
\end{matrix}
\ \right\} \ \ \supset \ \ \Psi ( \Sk ) \ .
\end{equation}

\vskip 0.10in

\noindent We shall sometimes refer to these hyperplanes as refined affine hyperplanes, and sometimes lapse into calling virtual affine roots refined affine roots.  The refined simplicial structure $\ScptA (\Sk )_{N} $ of each apartment $\ScptA (\Sk )$ yields a refined simplicial structure $\ScptB (\Gk )_{N}$ of $\ScptB (\Gk )$.

\bigskip

\subsection{Simplicial closure of two facets} \ 

\medskip

If $E, F  \, \subset \, \ScptB$ are facets, we define their 
{\color{\myblue}convex simplicial closure} is defined as:
\begin{equation}\label{c-prelim-ms}
\aligned
C(E,F) \ :&= \ {\text{\rm{smallest convex union of facets of ${\ScptB}$ }}} \\
&{\hskip 0.70in} {\text{\rm{containing $E$ and $F$}}} \\
&=\ {\underset {\tiny{\begin{matrix} {\text{\rm{apartment $\ScptA$,}}} \\
{\text{\rm{so that  $E , F \subset \ScptA$}}} \end{matrix}}} \bigcap } \ \ScptA \ \ . 
\endaligned
\end{equation}

\noindent Suppose $N \in \bN_{+}$.   The above can be extrapolated to refined facets $E, F \, \subset \, \ScptB_{N}$ as follows.
\begin{equation}\label{ccc-prelim-ms}
\aligned
C(E,F) \ :&= \ {\text{\rm{smallest convex union of facets of ${\ScptB}_{N}$}}} \\
&{\hskip 0.70in} {\text{\rm{containing $E$ and $F$.}}} \\
\endaligned
\end{equation}


\begin{defn}  Suppose $E$ and $F$ are facets of $\ScptB_{N}$. 
\begin{itemize}

\item[$\bullet$] \ Define facets $E$ and $F$ to be aligned if
\begin{equation}\label{aligned-prelim-ms}
{\text{\rm{$\dim(C(E,F)) = \dim (E) = \dim (F)$.{\hskip 0.70in} }}}
\end{equation}

\item[$\bullet$] \ Define a facet $E$ of $\ScptB_{N}$ is {\color{\myblue} subaligned} to a facet $F$ if:
\begin{equation}\label{subaligned-prelim-ms}
\aligned
{\text{\rm{(i)}}} &{\ \ }{\text{\rm{$\dim (E) \le \dim (F)$}}} \\
{\text{\rm{(ii)}}} &{\ \ }{\text{\rm{$\dim(C(E,F)) = \dim (F)$. {\hskip 1.45in} }}}
\endaligned
\end{equation} 
\noindent We use the term  {\color{\myblue} strictly subaligned} to mean $\dim (E) < \dim (F)$ and subaligned.  
\end{itemize}
\end{defn}

\noindent A consequence of facets $E$ and $F$ being aligned is that there is a canonical equality of $\Gk_{E,0}/\Gk_{E,0^{+}}$ and   $\Gk_{F,0}/\Gk_{F,0^{+}}$ and also of $\Gk_{E,r}/\Gk_{E,r^{+}}$ and   $\Gk_{F,r}/\Gk_{F,r^{+}}$.   If $E$ is subaligned to $F$, then we can view 
$\Gk_{F,0}/\Gk_{F,0^{+}}$ and $\Gk_{F,r}/\Gk_{F,r^{+}}$ as canonically inside $\Gk_{E,0}/\Gk_{E,0^{+}}$ and $\Gk_{E,r}/\Gk_{E,r^{+}}$ respectively.

\medskip

\noindent

\begin{lemma}\label{facet-prelim-ms} Given facets $E,F \subset \ScptB_{N}$, there are unique facets $D_{E} , D_{F} \subset C(E,F)$ so that:
\vskip 0.05in
\begin{itemize}
\item[(i)] \ $\dim (D_{E}) = \dim (D_{F}) = \dim ( C(E,F))$; thus, $D_{E}$ and $D_{F}$ are aligned (so both $\Gk_{D_{E},0}/\Gk_{D_{E},0^{+}} = \Gk_{D_{F},0}/\Gk_{D_{F},0^{+}}$ and $\Gk_{D_{E},r}/\Gk_{D_{E},r^{+}} = \Gk_{D_{F},r}/\Gk_{D_{F},r^{+}}$).
\medskip
\item[(ii)] \ $E \subset D_E$ and $F \subset D_{F}$; thus, the parahoric subgroup $\Gk_{D_{E},0}$ (resp.~$\Gk_{D_{F},0}$) is contained in $\Gk_{E,0}$ (resp.~$\Gk_{F,0}$). 
\end{itemize}
\end{lemma}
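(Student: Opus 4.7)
The plan is to work inside a single apartment containing both $E$ and $F$, and to construct $D_E$ and $D_F$ by following a straight-line segment between the interiors of the two facets. By the axioms of the Bruhat--Tits building (and the fact that the refined structure $\ScptB_{N}$ inherits the apartment system of $\ScptB$), there exists an apartment $\ScptA$ with $E \cup F \subset \ScptA$. Inside $\ScptA$ every facet is described by its sign ($+$, $-$, or $0$) with respect to each refined affine hyperplane $H_\xi$, $\xi \in \Psi(\Sk)_N$; I will use this sign-pattern description throughout.

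Choose relative-interior points $e \in E$ and $f \in F$. The closed segment $[e,f]$ meets only finitely many hyperplanes from $\Psi(\Sk)_N$, so there is an $\varepsilon > 0$ such that the half-open segment $\{e + t(f-e) : t \in (0,\varepsilon]\}$ lies in a single facet, which I define to be $D_E$; define $D_F$ symmetrically using $t \in [1-\varepsilon,1)$. First I would check: $D_E$ lies on a hyperplane $H_\xi$ if and only if both $E$ and $F$ do, and otherwise $D_E$ sits on the same strict side of $H_\xi$ as does the point which has the maximal value of $|\xi|$ among $\{e,f\}$ not lying on $H_\xi$. Letting $W$ be the intersection of all hyperplanes containing $E \cup F$, this characterization shows $D_E \subset W$ and that $D_E$ is of top dimension $\dim W$ inside $W$; the same for $D_F$. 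Also, every point of the form $e + t(f-e)$ lies in the (closed) convex hull of $E \cup F$, hence in $C(E,F)$, so $D_E, D_F \subset C(E,F)$. Conversely $C(E,F) \subset W$ since $W$ is a convex union of facets containing $E$ and $F$. Combining, $\dim D_E = \dim D_F = \dim W = \dim C(E,F)$, which gives claim (i); and $e \in \overline{D_E}$ together with $e$ being in the relative interior of $E$ forces the entire facet $E$ to lie in $\overline{D_E}$ (closures of facets are unions of facets), giving claim (ii). The group statements then follow from the standard fact that $\Gk_{X,s}$ depends only on the facet containing $X$ and is monotone under the facet closure order.

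For uniqueness, suppose $D$ is any facet contained in $C(E,F)$ with $E \subset \overline{D}$ and $\dim D = \dim C(E,F) = \dim W$. Since $D \subset W$ and has top dimension in $W$, for each hyperplane $H_\xi$ not containing $W$, $D$ must lie strictly on one side. If $H_\xi$ does not contain $E$, the requirement $E \subset \overline{D}$ forces $D$ to be on the same side as $E$. If $H_\xi$ contains $E$ but not $W$ (hence not $F$), then $D \subset C(E,F)$ combined with the fact that the whole convex hull lies on the same side as $F$ forces $D$ to be on the $F$-side. Thus the sign pattern of $D$ with respect to every hyperplane is determined, so $D = D_E$. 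The analogous argument handles $D_F$.

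The main technical point I expect to work out carefully is the identification $\dim C(E,F) = \dim W$, and concomitantly that every facet in $C(E,F)$ lies in $W$; this is the only place where the definition of $C(E,F)$ as a \emph{convex union of facets} (rather than as an arbitrary convex set) really gets used, and it is crucial for forcing $D$ to be top-dimensional in $W$ in the uniqueness step. Once this is nailed down, the sign-pattern bookkeeping and the transfer to Moy--Prasad groups are routine.
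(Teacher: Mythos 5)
Your proof is correct, and it works in the same circle of ideas as the paper's own argument (fix one apartment containing $E$ and $F$, and play convexity of $C(E,F)$ against the refined affine root hyperplanes), but it is organized differently in two useful respects. First, you actually \emph{construct} $D_E$, as the facet met by the segment from $e$ toward $f$ just after leaving $E$, whereas the paper simply posits ``a facet of maximal dimension in $C(E,F)$ containing $E$'' and proves only uniqueness; your construction therefore supplies the existence step that the paper leaves implicit. Second, for uniqueness the paper exhibits a single separating hyperplane between two putative candidates and appeals to convexity, while you determine the entire sign vector of any candidate $D$: the sign is forced by $E\subset\overline D$ when $H_\xi\not\supset E$, and forced to the $F$-side by $C(E,F)\subset\{\xi\ge 0\}$ when $H_\xi\supset E$ but $H_\xi\not\supset F$. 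This is the same convexity input, and your second case in fact makes explicit the step the paper compresses into ``obviously $C(E,F)$ must be in the closure of exactly one of the halfspaces.'' One small slip, harmless because it is never used downstream: your rule that $D_E$ lies on the side of ``the point with maximal $|\xi|$ among $\{e,f\}$ not on $H_\xi$'' is wrong when both $e$ and $f$ are off $H_\xi$ (take $\xi(e)=1$, $\xi(f)=-5$: for small $t>0$ one has $\xi(e+t(f-e))>0$, so $D_E$ is on the side of $e$, not of $f$); the correct rule is ``the side of $e$ if $e\notin H_\xi$, otherwise the side of $f$.'' The facts you actually use later --- that $D_E\subset H_\xi$ iff both $E,F\subset H_\xi$, that $E\subset\overline{D_E}$, and that $D_E\subset C(E,F)$ --- are all established correctly.
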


\medskip

\begin{proof} We recall that any apartment $\ScptA$ which contains the (refined) facets $E$ and $F$ also contains their convex closure $C(E,F)$.  Let $D_{E} \subset C(E,F)$ be a refined facet of maximal dimension $\dim (C(E,F))$ containing $E$.  In $\ScptA$, the  affine subspace $\myAff (E)$ contains $C(E,F)$.  Suppose $D'_{E} \subset C(E,F)$ is another refined facet containing $E$ of dimension $\dim (C(E,F))$.  If $D_{E} \neq D'_{E}$, there must be an virtual affine root $\psi$ so that \ (i)
the intersection $H_{\psi} \cap \myAff (C(E,F))$ is codimension one in $\myAff (C(E,F))$, and so divides it into two halfspaces, and \ (ii)  $D_{E}$ and $D'_{E}$ are in opposite halfspaces.  Obviously $C(E,F)$ must be in the closure of exactly one of the halfspaces which contradicts $D_{E}$ and $D'_{E}$ being subsets of 
$C(E,F)$.  Thus, $D_{E}$ is unique.  Similarly for $F$.

\end{proof}

\bigskip

\subsection{Iwahori factorizations} \

\medskip

We fix an apartment $\ScptA = \ScptA (\Sk )$ of $\ScptB = \ScptB (\Gk )$.  If $\alpha \in \Phi = \Phi (\Sk)$, let $\Uk_{\alpha}$ denote the $\alpha$-root group.  Similarly, if $\psi \in \Psi$, let $\Xk_{\psi}$ denote the affine root group indexed by $\psi$.  \  Fix $x \in \ScptA$ and $r > 0$.  For any $\alpha \in \Phi \cup \{ 0 \}$, set 
$$
{\text{\rm{$\psi_{\alpha , x ,r } \ := \ $ the smallest affine root $\psi$ with $\mygrad(\psi ) = \alpha$, and {\,}$\psi (x) \ge r$.}}}
$$

\noindent We recall for any choice $\Phi^{+}$ and $\Phi^{-} = - \Phi^{+}$ of positive and negative roots, that the group $\Gk_{x,r}$ has the Iwahori factorization:

\begin{prop} {\text{\rm{(Iwahori factorization)}}} \quad $\Gk_{x,r} \, = \, \Uk^{-}_{x,r} \, \Sk_{r} \, \Uk^{+}_{x,r}$, where
$$
\Uk^{-}_{x,r} \  = \ {\underset {\alpha \in \Phi^{-}} \prod } \Xk_{\psi_{ \alpha , x , r }} 
{\text{\rm{ \quad and \quad }}}
\Uk^{+}_{x,r} \  = \ {\underset {\alpha \in \Phi^{+}} \prod } \Xk_{\psi_{ \alpha ,  x , r }} \ .
$$
\end{prop}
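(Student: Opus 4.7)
The plan is to deduce the factorization from the classical Bruhat--Tits structure theory of $\Gk_{x,r}$, using the fact that $r > 0$ keeps us inside the pro-unipotent radical of the parahoric $\Gk_{x,0}$, where commutator relations behave well with respect to the affine root filtration.

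First I would recall the generating set: for $\mk$-split $\mG$ and $r > 0$, the group $\Gk_{x,r}$ is generated by $\Sk_{r}$ together with the affine root groups $\Xk_\psi$ for all $\psi \in \Psi$ with $\psi(x) \geq r$. For a fixed $\alpha \in \Phi$, any such $\psi$ with $\mygrad(\psi) = \alpha$ satisfies $\psi \geq \psi_{\alpha,x,r}$ as affine functions on $\ScptA$, and since the filtration of $\Uk_\alpha$ by affine root values is decreasing one has $\Xk_\psi \subseteq \Xk_{\psi_{\alpha,x,r}}$. Hence $\Sk_{r}$ together with the finitely many groups $\Xk_{\psi_{\alpha,x,r}}$ ($\alpha \in \Phi$) already generate $\Gk_{x,r}$.

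Next, after fixing an ordering of $\Phi^{+}$ (and correspondingly of $\Phi^{-}$), I would show that $\Uk^{+}_{x,r}$ and $\Uk^{-}_{x,r}$ as defined in the statement are genuine subgroups of $\Gk_{x,r}$, independent of the chosen ordering. The key input is the Chevalley commutator formula
\[
[\Xk_\psi, \Xk_{\psi'}] \ \subseteq \ \prod_{\substack{i,j \ge 1 \\ i\,\mygrad(\psi) + j\,\mygrad(\psi') \in \Phi}} \Xk_{i\psi + j\psi'}.
\]
Since $\psi(x),\psi'(x) \geq r$ and $i,j \geq 1$, each product factor satisfies $(i\psi + j\psi')(x) \geq r$, so lies in the corresponding $\Xk_{\psi_{i\alpha+j\beta,x,r}}$. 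When $\mygrad(\psi)$ and $\mygrad(\psi')$ are both positive (resp.~both negative) no opposite-root cancellation occurs, so $\Uk^{\pm}_{x,r}$ is closed under the product and commutator reordering matches products taken in any order.

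Finally, I would show $\Gk_{x,r} = \Uk^{-}_{x,r}\,\Sk_{r}\,\Uk^{+}_{x,r}$ by verifying that the right-hand side contains every generator and is stable under left and right multiplication by each generator. Multiplication by elements of $\Sk_{r}$ or by $\Uk^{\pm}_{x,r}$-elements on the appropriate side is immediate; the interesting case is moving an element of $\Xk_{\psi_{-\alpha,x,r}}$ past $\Uk^{+}_{x,r}$ (or symmetrically). For this I would reduce to the rank-one $\SL_{2}$ subgroup generated by $\Uk_{\alpha}$ and $\Uk_{-\alpha}$ and use the standard identity
\[
x_{-\alpha}(t)\,x_{\alpha}(s) \ = \ x_{\alpha}(s')\,h(u)\,x_{-\alpha}(t'),
\]
checking by direct valuation computation that when $s,t$ lie at depth $\geq r$ the outputs satisfy $s' ,t'$ at depth $\geq r$ and $h(u) \in \Sk_{r}$. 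Injectivity of the product map then follows from the usual uniqueness of the decomposition in the opposite unipotent radicals modulo the torus. \emph{The main obstacle is this last rank-one bookkeeping}: one must verify that the torus factor produced when interchanging a positive and negative affine root element genuinely lies in $\Sk_{r}$ and not merely in $\Sk_{0^{+}}$, and it is exactly the hypothesis $r > 0$ (which forces the valuations of $s$ and $t$ to add constructively) that makes this work.
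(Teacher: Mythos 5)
The paper does not prove this Proposition: it is introduced with ``We recall \dots'' and stated as a known fact from Bruhat--Tits/Moy--Prasad theory (cf.\ [BTa], [MPa]), so there is no in-text argument to compare yours against. Your sketch is the classical direct proof and its ingredients are the right ones: reduction to the minimal affine root groups $\Xk_{\psi_{\alpha,x,r}}$, closure of $\Uk^{\pm}_{x,r}$ under multiplication via the affine Chevalley commutator formula, and the rank-one identity $x_{-\alpha}(t)\,x_{\alpha}(s)=x_{\alpha}\bigl(s(1+st)^{-1}\bigr)\,\alpha^{\vee}\bigl((1+st)^{-1}\bigr)\,x_{-\alpha}\bigl(t(1+st)^{-1}\bigr)$, whose torus factor lies in $\Sk_{2r}\subset\Sk_{r}$ precisely because $\psi_{\alpha,x,r}(x)+\psi_{-\alpha,x,r}(x)\ge 2r>0$, as you observe.

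The one step you understate is the claim that moving $\Xk_{\psi_{-\alpha,x,r}}$ past $\Uk^{+}_{x,r}$ ``reduces to the rank-one subgroup.'' It does not, on the nose: pushing $x_{-\alpha}(t)$ past $x_{\beta}(s)$ for $\beta\in\Phi^{+}$ with $\beta\ne\alpha$ produces, via the same commutator formula, terms in $\Xk_{i\psi'+j\psi}$ with gradient $-i\alpha+j\beta$, and some of these gradients are again negative roots; so fresh negative-root factors appear that must themselves be pushed back to the left, and the rank-one identity alone does not terminate the process. The saving grace, which you should make explicit, is that every such cross term has affine value $\ge i\psi'(x)+j\psi(x)\ge 2r$ at $x$, hence lies in $\Gk_{x,2r}$, and likewise for the rank-one torus contribution. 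One therefore argues by successive approximation along the filtration $\Gk_{x,r}\supset\Gk_{x,2r}\supset\Gk_{x,3r}\supset\cdots$, proving the factorization modulo $\Gk_{x,kr}$ for each $k$ and passing to the limit (the intersection of the filtration is trivial and the groups are compact, so the approximations converge). With that induction supplied, your argument is complete and is exactly the standard one; without it, the ``stable under multiplication by each generator'' step does not close.
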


\noindent If $\psi = \alpha + \ell$ is a virtual affine root, define the {\color{\myblue} {\it{(virtual) affine root group}}}  $\Xk_{\psi}$ as:
$$
\Xk_{\psi} \ := \ \Xk_{(\alpha + \ceil{\ell{\hskip 0.01in}})} \ .
$$
\noindent The (virtual) affine root groups satisfy analogous commutation relations as the affine root groups.  In the definition of the groups $\Gk_{x,r}$, we can replace affine root groups with virtual affine root groups, but we obviously do not get any different groups.

\begin{lemma}  \ Given $x \in \ScptA = \ScptA (\Sk)$, and $r>0$:
\smallskip
\begin{itemize}
\item[(i)] \ $\Gk_{x,r} \ = \ {\underset {\tiny{\begin{matrix} {\text{\rm{$\psi$ virtual affine}}} \\ {\psi (x) \ge r} \end{matrix}}} \prod } \Xk_{\psi}$.
\smallskip
\item[(ii)] For a choice of positive roots $\Phi^{+}$:
$$
\Uk^{-}_{x,r} = {\underset {\tiny{\begin{matrix} {\text{\rm{$\psi$ virtual affine}}} \\ \mygrad (\psi ) \in \Phi^{-} \\ {\psi (x) \ge r} \end{matrix}}} \prod } \Xk_{\psi} \qquad {\text{\rm{and}}} \qquad \Uk^{+}_{x,r} = {\underset {\tiny{\begin{matrix} {\text{\rm{$\psi$ virtual affine}}} \\ \mygrad (\psi ) \in \Phi^{+} \\ {\psi (x) \ge r} \end{matrix}}} \prod } \Xk_{\psi} \ .
$$ 
\end{itemize}
\end{lemma}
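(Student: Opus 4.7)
The plan is to reduce both statements to the ordinary (non-virtual) Iwahori factorization given in the preceding proposition, by showing that for each fixed gradient $\alpha \in \Phi \cup \{0\}$ the product of virtual affine root groups indexed by $\{\psi : \mygrad(\psi) = \alpha, \, \psi(x) \ge r\}$ collapses to the single affine root group $\Xk_{\psi_{\alpha,x,r}}$. Once this collapse is established, (i) is immediate from the proposition (taking the product over all $\alpha \in \Phi \cup \{0\}$, where the $\alpha = 0$ contribution yields $\Sk_r$), and (ii) follows by restricting the gradient to $\Phi^-$ or $\Phi^+$.

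The key bookkeeping step is as follows. Fix $\alpha \in \Phi$. By the very definition of the virtual affine root group, $\Xk_{\alpha + \ell} = \Xk_{\alpha + \lceil \ell \rceil}$ for any $\ell \in \tfrac{1}{N}\bZ$; in particular every virtual affine root group of gradient $\alpha$ already equals a genuine affine root group of gradient $\alpha$. Among the virtual affine roots $\psi = \alpha + \ell$ with $\psi(x) \ge r$, the smallest $\ell$ is
\[
\ell_{0} \; := \; \tfrac{1}{N}\,\lceil N(r - \alpha(x))\rceil,
\]
and for any larger $\ell \in \tfrac{1}{N}\bZ$ one has $\lceil \ell \rceil \ge \lceil \ell_{0} \rceil$. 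Since the genuine affine root groups $\{\Xk_{\alpha+k}\}_{k \in \bZ}$ form a decreasing filtration, the inclusions $\Xk_{\alpha + \lceil \ell_{0}\rceil} \supseteq \Xk_{\alpha + \lceil \ell \rceil}$ hold, so the product over all admissible $\psi$ collapses to $\Xk_{\alpha + \lceil \ell_{0}\rceil}$.

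It then remains to verify the identification $\lceil \ell_{0}\rceil = \lceil r - \alpha(x)\rceil$, which is a one-line check: $\ell_{0}$ is the ceiling of $r - \alpha(x)$ in the finer lattice $\tfrac{1}{N}\bZ$, and the further ordinary ceiling recovers $\lceil r - \alpha(x)\rceil$. By the definition of $\psi_{\alpha, x, r}$, this gives exactly $\Xk_{\psi_{\alpha, x, r}}$, matching the term supplied by the ordinary Iwahori factorization. The same argument with gradient $\alpha = 0$, where the affine root groups become the filtration pieces of $\Sk$, accounts for the torus factor. Applying this collapse gradient-by-gradient, in the fixed order prescribed by the ordinary Iwahori factorization, proves (i); splitting the gradients into $\Phi^-$ and $\Phi^+$ gives (ii).

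The only real obstacle is notational: keeping straight the distinction between the integer $\lceil r - \alpha(x)\rceil$ indexing genuine affine roots and the two-step ceiling arising from the virtual indexing set $\tfrac{1}{N}\bZ$, and making sure the product of $\Xk_\psi$'s is being interpreted in a fixed ordering compatible with the usual Iwahori factorization so that equality of sets of group elements (not merely of the underlying pieces) is justified. Once the collapse to $\Xk_{\psi_{\alpha,x,r}}$ is in hand, no new commutator analysis is needed beyond what the ordinary factorization already supplies.
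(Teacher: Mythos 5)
Your proof is correct, and it supplies precisely the "elementary verification" that the paper leaves to the reader: the collapse of all virtual affine root groups of a fixed gradient $\alpha$ with $\psi(x)\ge r$ to the single group $\Xk_{\psi_{\alpha,x,r}}$, via the nested-ceiling identity $\lceil\lceil Nt\rceil/N\rceil=\lceil t\rceil$, after which the ordinary Iwahori factorization gives both statements. No gaps.
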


\begin{proof}  Elementary verification.
\end{proof}

\medskip

\begin{lemma}\label{bf-prelim-ms}  \quad Fix {\,}$N \in \bN_{+}$.  \ For {\,}$r \in {\frac{1}{N}}  \, \bN$, and a (refined) facet {\,}$F \subset \ScptB (\Gk)_{N}$:
\begin{itemize}
\item[(i)] The groups $\Gk_{x,r^{+}}$ and $\Gk_{x,r}$ are constant on the interior of any facet of {\,}$F$. 
\vskip 0.05in
\noindent{\hskip 0.03in}Let {\,}$\Gk_{F,r^{+}}$ and  {\,}$\Gk_{F,r}${\,}  denote these groups.
\smallskip
\item[(ii)] Suppose $E \subset F$ is a (refined) subfacet of $F$, then 
$$
\Gk_{E,r} \ \supset \ \Gk_{F,r} \ \supset \ \Gk_{F,r^{+}} \ \supset \ \Gk_{E,r^{+}} \ .
$$
\end{itemize}
\end{lemma}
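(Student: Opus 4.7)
The plan is to reduce both parts to a statement about which virtual affine roots $\psi \in \Psi(\Sk)_N$ satisfy $\psi(x) \ge r$ (resp.\ $\psi(x) > r$). By the Iwahori factorization together with the preceding Lemma, the group $\Gk_{x,r}$ is the product of the $x$-independent torus filtration $\Sk_r$ with the virtual affine root subgroups $\Xk_\psi$ for those $\psi$ satisfying $\psi(x) \ge r$, and the analogous statement holds for $\Gk_{x,r^+}$ using strict inequality. The central geometric observation driving the argument is that because $r \in \tfrac{1}{N}\bZ$, the translate $\psi - r$ of any virtual affine root $\psi$ is again a virtual affine root; in particular its zero hyperplane $H_{\psi - r}$ is one of the refined affine hyperplanes defining the simplicial structure on $\ScptA(\Sk)_N$.

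For part (i), I fix an apartment $\ScptA(\Sk)$ containing $F$. By the definition of the refined simplicial structure, on $\myint(F)$ each affine function $\psi - r$ has constant sign: either $\psi \equiv r$ on $\overline{F}$, or $\psi > r$ on $\myint(F)$, or $\psi < r$ on $\myint(F)$. Consequently the two sets
$$\{\psi \in \Psi(\Sk)_N : \psi(x) \ge r\} \qquad \text{and} \qquad \{\psi \in \Psi(\Sk)_N : \psi(x) > r\}$$
are independent of $x \in \myint(F)$, and therefore so are the groups $\Gk_{x,r}$ and $\Gk_{x,r^+}$.

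For part (ii), with $E$ a subfacet of $F$ (so $E \subset \overline{F}$), I handle the three inclusions separately. The middle inclusion $\Gk_{F,r} \supset \Gk_{F,r^+}$ is immediate. For $\Gk_{E,r} \supset \Gk_{F,r}$: if $\psi(y) \ge r$ for $y \in \myint(F)$, then by continuity of the affine function $\psi$ we have $\psi \ge r$ on $\overline{F} \supset E$, so $\psi$ also contributes to $\Gk_{E,r}$. For $\Gk_{F,r^+} \supset \Gk_{E,r^+}$: suppose $\psi(x) > r$ for $x \in \myint(E)$; the trichotomy from part (i) applied to $F$ rules out the cases $\psi \equiv r$ on $\overline{F}$ and $\psi < r$ on $\myint(F)$, since each would yield $\psi(x) \le r$ on $E \subset \overline{F}$, a contradiction. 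Hence $\psi > r$ on $\myint(F)$, as required.

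No step poses a genuine obstacle; the whole argument is a direct translation between the simplicial geometry of $\ScptA(\Sk)_N$ and the virtual-affine-root description of the Moy--Prasad filtration. The one point worth flagging is that the hypothesis $r \in \tfrac{1}{N}\bN$ is used precisely to guarantee that the $r$-level set of each virtual affine root is already among the refined hyperplanes defining $\ScptA(\Sk)_N$; without this, the sign of $\psi - r$ need not be constant on the interior of a facet of $\ScptB(\Gk)_N$, and neither (i) nor (ii) would hold in this clean form.
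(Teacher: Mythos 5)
Your proof is correct and follows essentially the same route as the paper: both reduce to the virtual-affine-root description of $\Gk_{x,r}$ and $\Gk_{x,r^{+}}$ and use the fact that, since $r\in\tfrac{1}{N}\bZ$, the level set $\psi=r$ of a virtual affine root is a refined hyperplane and hence cannot cut the interior of a refined facet (the paper phrases this as the values of a nonconstant $\psi$ on $\myint(F)$ lying strictly between consecutive multiples of $\tfrac1N$). Your explicit verification of the three inclusions in part (ii) simply fills in what the paper dismisses with ``Statement (ii) follows from (i).''
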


\begin{proof} \ \ To prove (i), suppose $\psi$ is a virtual affine root.  If sufficies to show 
\begin{equation}\label{bf-a-prelim-ms}
\aligned
\forall \ x, \, y \, \in \, \myint (F) \quad : \quad \Xk_{\psi} \ \subset \ \Gk_{x,r^{+}} \quad &\iff \quad \Xk_{\psi} \ \subset \ \Gk_{y,r^{+}} \\
\quad \Xk_{\psi} \ \subset \ \Gk_{x,r} {\hskip 0.08in} \quad &\iff \quad \Xk_{\psi} \ \subset \ \Gk_{y,r} \ . \\
\endaligned
\end{equation}
This is obvious if $\psi$ is constant on $\myint (F)$.  \ If $\psi$ is not constant on $\myint (F)$, then there exists $j \in \bZ$ so that {\,}$\forall \ x \, \in \, \myint (F) \ :  {\frac{j}{N}} \, < \, \psi \, (x) \, < \, {\frac{j+1}{N}}$.  \ This is because otherwise the values of $\psi$, on $\myint (F)$,  would contain an open neighborhood of some  ${\frac{j}{N}}$, and hence the hyperplane $H_{\big( \psi - {\frac{j}{N}} \big)}$ would cut $F$ into two strictly smaller pieces, contradicting the assumption $F$ is a refined facet.   So, \eqref{bf-a-prelim-ms} is true, and the two assertions of the statement (i) hold.

\smallskip

\noindent Statement (ii) follows from (i).
\end{proof}

\smallskip

\noindent Statement (ii) of the Lemma for the value $r=0$ is the assertion the parahoric subgroup $\Gk_{E,0}$ contains the parahoric subgroup $\Gk_{F,0}$.    

\bigskip

For a facet $E \subset \ScptB (\Gk )_{N}$, define the {\color{\myblue} \it{virtual sphere}} {\,}${\myVsph}(E)$ of $E$ as the set:
\begin{equation}\label{virtual-sphere-prelim-ms} 
{\myVsph}(E) \ : = \ \{ \ {\text{\rm{(facet}}} \ F \subset \ {\ScptB (\Gk )}_{N} \ | \ F \supsetneq E \ \} \ .
\end{equation}

\noindent For $N=1$, and $\ScptB (\Gk)_{N} = \ScptB (\Gk)$ the building with its original simplicial structure, we shall also refer to the above set as the {\color{\myblue} \it{sphere}} of $E$ and use the notation $\mysph (E)$.  \ Let
\begin{equation}\label{facet-b-prelim-ms}
E \ \rightarrow E'
\end{equation}

\noindent be the map which takes a facet $E \subset {\ScptB (\Gk )}_{N}$ to its facet closure in $\ScptB (\Gk )$.  Obviously 
$$
\Gk_{E,0} \ = \ \Gk_{E',0} \quad , \quad \Gk_{E,0} \ = \ \Gk_{E',0} \ , \ \ \ {\text{\rm{so}}} \ \ \ \Gk_{E,0}/\Gk_{E,0^{+}} \, = \, \Gk_{E',0}/\Gk_{E',0^{+}} \ \ .
$$

\noindent Each facet $F \subset \myVsph (E)$ defines a parabolic subgroup $\Gk_{F,0}/\Gk_{E,0^{+}}$ of $\Gk_{E,0}/\Gk_{E,0^{+}}$, and this map is a surjection but for $N \ge 2$ not a bijection because the facet closure map from ${\ScptB (\Gk )}_{N}$ to  ${\ScptB (\Gk )}$ is not a bijection.  For $N=1$, we have

\begin{prop}\label{facets-prelim-ms}  The facets $F \subset \ScptB (\Gk)$ which properly contain a $E \subset \ScptB (\Gk )$ are in bijection with the spherical Tits building of $\Gk_{E,0}/\Gk_{E,0^{+}}$.
\end{prop}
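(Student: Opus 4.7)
The plan is to exhibit an explicit bijection between $\mysph(E)$ and the non-trivial parabolic subgroups of the reductive quotient $\overline{G}_E := \Gk_{E,0}/\Gk_{E,0^{+}}$ (which is the same data as the spherical Tits building of $\overline{G}_E$, with maximal parabolics corresponding to vertices). Fix an apartment $\ScptA = \ScptA(\Sk)$ containing $E$, and let $\Phi_E \subset \Phi$ denote the set of gradients of affine roots $\psi \in \Psi(\Sk)$ that vanish identically on $E$. Standard Bruhat--Tits theory identifies $\overline{G}_E$ as the connected reductive $\bF_q$-group with maximal torus coming from the image of $\Sk \cap \Gk_{E,0}$ and root system $\Phi_E$; its root subgroups are the images of the affine root groups $\Xk_{\psi}$ for $\psi|_E \equiv 0$.

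For the forward map: given a facet $F \in \mysph(E)$, choose an apartment containing both $E$ and $F$ (in $\ScptA$ after translation by $\Gk_{E,0}$). By Lemma \ref{bf-prelim-ms} applied with $r=0$, we have $\Gk_{E,0^{+}} \subset \Gk_{F,0^{+}} \subset \Gk_{F,0} \subset \Gk_{E,0}$, so $P_F := \Gk_{F,0}/\Gk_{E,0^{+}}$ is a well-defined subgroup of $\overline{G}_E$. Using the Iwahori factorization of $\Gk_{F,0}$, an element $\alpha \in \Phi_E$ contributes its root subgroup to $P_F$ exactly when there is an affine root $\psi$ with $\mygrad(\psi) = \alpha$, $\psi|_E \equiv 0$, and $\psi \geq 0$ on the closure of $F$. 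This set of roots is a parabolic subset of $\Phi_E$ (closed under addition and containing one of $\pm \alpha$ for every $\alpha$, since either $\psi$ or $-\psi+1$ is $\geq 0$ on the closure of $F$), so $P_F$ is a parabolic subgroup of $\overline{G}_E$. The condition $F \supsetneq E$ ensures that the cone $\{x \in \ScptA : \psi(x) \geq 0 \text{ for the chosen } \psi\}$ is strictly smaller than $\ScptA$ locally at $E$, i.e. that $P_F \neq \overline{G}_E$.

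Injectivity and surjectivity then reduce to a combinatorial statement inside $\ScptA$: facets $F$ with $E \subset \overline{F}$ are in bijection with the connected components of the complement of the affine hyperplanes $H_\psi$ (with $\psi|_E \equiv 0$) in a small punctured neighborhood of $E$, which are in turn indexed by the ``sign vectors'' $(\epsilon_\psi)_\psi$, and these sign vectors are exactly the parabolic subsets of $\Phi_E$ containing $\Phi_E^+$ for some chamber structure (equivalently, cosets of type). Surjectivity: given a parabolic subset $R \subset \Phi_E$, the cone $\bigcap \{\psi \geq 0 : \mygrad(\psi) \in R, \psi|_E \equiv 0\}$ contains a unique facet $F$ (of the refined Bruhat--Tits structure) with $E \subset \overline{F}$ mapping to the corresponding parabolic. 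The main obstacle is the careful bookkeeping in this last paragraph: one must match the simplicial adjacency in $\mysph(E)$ (facet containment $F \subset F'$) with the opposite inclusion on parabolics $P_F \supset P_{F'}$, and verify the resulting simplicial isomorphism coincides with the standard realization of the spherical building of $\overline{G}_E$. This matching is essentially the content of the Bruhat--Tits description of $\overline{G}_E$ together with the standard correspondence between parabolic subsets of $\Phi_E$ and parabolic subgroups of $\overline{G}_E$.
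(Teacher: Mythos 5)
Your argument is correct and is essentially the paper's proof with the details filled in: the paper simply invokes the standard correspondence between proper parahoric subgroups of $\Gk_{E,0}$ and proper parabolic subgroups of $\Gk_{E,0}/\Gk_{E,0^{+}}$, which is exactly what your root-theoretic construction of $F \mapsto P_F$ establishes. One small caution: the statement concerns the unrefined building $\ScptB (\Gk )$ (i.e.\ $N=1$), so your parenthetical reference to ``the refined Bruhat--Tits structure'' should be dropped --- for $N\ge 2$ the map from refined facets containing $E$ to parabolics is surjective but not injective, as the paper notes immediately before this proposition.
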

\begin{proof}  The proper parahoric subgroups contained in $\Gk_{E,0}$ are in one-to-one correspondence with the proper parabolic subgroups of the finite field group $\Gk_{E,0}/\Gk_{E,0^{+}}$, and the later are parametrized by the facets of the spherical Tits building of   $\Gk_{E,0}/\Gk_{E,0^{+}}$.
\end{proof}

\smallskip

\bigskip

\subsection{Iwahori decompositions} \quad 

\medskip

Fix $N \in \bN_{+}$, and a facet $E \subset \ScptB (\Gk)_{N}$.  Suppose $\ScptA = \ScptA (\Sk )$ is an apartment containing $E$.   Let $r \in \frac{1}{N}\mathbb Z$. 
 The set 
\begin{equation}\label{constant-prelim-ms}
\Phi_{E} \ := \ \{ \ \mathrm{grad}(\psi) \ \ | \ {\text{\rm{$\psi$ a virtual  affine root,  $\psi=r$  on $E$}}} \ \} 
\end{equation}

\noindent is  a root sub-system of a $\Phi$, independent of $r$. For example, the extremes are:

\smallskip

\begin{itemize}
\item[(i)] \ If the facet $E$ is a special point then $\Phi_{E} = \Phi$. 

\smallskip

\item[(ii)] \ If $E$ is a chamber, then $\Phi_{E} = \emptyset$. 
\end{itemize}

\smallskip



\medskip

 For $\alpha \in \Phi \cup \{ 0 \}$, define:
\smallskip
$$
\aligned
\quad \psi_{\alpha , E , r}  \ :&= \ 
\begin{cases} 
\begin{array}{l}
{\text{\rm{the smallest virtual affine root (with gradient $\alpha$) so}}} \\
{\text{\rm{that $\psi_{\alpha , E , r}(x) \ge r$ \ $\forall \ x \in E$}}} \\
\end{array}
\end{cases} \\
&\ \\
\quad \psi_{\alpha , \myint (E) , r^{+}} :&= \ 
\begin{cases} 
\begin{array}{l}
{\text{\rm{the smallest virtual affine root (with gradient $\alpha$) so}}} \\
{\text{\rm{that $\psi_{\alpha , \myint (E) , r^{+}}( x ) > r$ \ $\forall \ x \in \myint (E)$}}}
\end{array}
\end{cases}
\endaligned
$$

\smallskip


\noindent Then, 
$$
\aligned
\Gk_{E,r} \ &= \ \Big( {\underset 
{\tiny{\begin{matrix} {\alpha \, \in \, (\Phi (\Sk ) \cup \{ 0 \} )} \\ \ \end{matrix}}}  \prod } \Xk_{\psi_{\alpha , E , r}} \ 
\Big) \ = \ \Big( {\underset 
{\tiny{\text{\rm{$\alpha \in (\Phi_{E} \cup \{ 0 \})$}}}}  \prod } \Xk_{\psi_{\alpha , E , r}} \ 
\Big) \quad \Big( \ 
{\underset {\tiny{\begin{matrix} {\text{\rm{$\alpha \notin \Phi_{E}$}}} \\
\ \end{matrix} }} \prod } \Xk_{\psi_{\alpha , E , r}} \ \Big) \\ 
\Gk_{E,r^{+}} \ &= \ \Big( {\underset 
{\tiny{\begin{matrix} {\alpha \, \in \, (\Phi (\Sk ) \cup \{ 0 \} )} \\ \ \end{matrix}}}  \prod } \ \Xk_{\psi_{\alpha , \myint (E) , r^{+}}} \ 
\Big) \ = \ \Big( {\underset 
{\tiny{\text{\rm{$\alpha \in (\Phi_{E} \cup \{ 0 \})$}}}}  \prod } \Xk_{\psi_{\alpha , \myint (E) , r^{+}}} \ 
\Big) \quad \Big( \ 
{\underset {\tiny{\begin{matrix} {\text{\rm{$\alpha \notin \Phi_{E}$}}} \\
\ \end{matrix} }} \prod } \ \Xk_{\psi_{\alpha , \myint (E) , r^{+}}} \ \Big) \\
\endaligned 
$$

\noindent We note that:

\smallskip
\begin{itemize}
 \item[(i)] \ If $\alpha\in \Phi_E\cup \{0\}$, then the value of $\psi_{\alpha , E , r}$ (resp.~$\psi_{\alpha , \myint (E) , r^{+}}$) on $E$ is $r$ (resp.~$> r$).

\smallskip
\item[(ii)] \ Otherwise  $\psi_{\alpha , E , r} =  \psi_{\alpha , \myint (E) , r^{+}}$.

\end{itemize}

\smallskip

\noindent Therefore,

\begin{equation}\label{p-prelim-ms}
\Gk_{E,r} / \Gk_{E,r^{+}} \ = \ {\underset {\alpha \, \in \, (\Phi_{E} \cup \{ 0 \} )} \prod }  \ \Xk_{ \psi_{\alpha , E , r}} {\, } / {\, }\Xk_{\psi_{\alpha , \myint (E) , r^{+}}} \ .
\end{equation}

\bigskip


\medskip

We recall that if $\ScptA$ is a (real orthogonal) affine space, and ${\mathcal E}$ is an affine subspace of $\ScptA$, then there is a {\color{\myblue}{\it{unique affine orthogonal transformation}}} $R_{\mathcal E}$ which reflects a point of $\ScptA$ across ${\mathcal E}$.

\medskip

Fix $N \in \bN$, and let $E \subset \ScptB(\Gk )_{N}$ be a (refined) facet.  Define two (refined) facets $F$ and $\bar F$ containing $E$ to be {\color{\myblue}{\it{opposite with respect to $E$}}} if there exists an apartment $\ScptA = \ScptA (\Sk )$ containing both $F$ and $\bar F$, so that in $\ScptA$:
\begin{itemize}
\item[(i)]  The affine subspaces $\myAff (F)$ and $\myAff (\bar F)$ are equal.
\smallskip
\item[(ii)] $R_{\myAff (E)}(F) \, = \, \bar F$.
\end{itemize}

\medskip

\noindent Since the parahoric subgroup $\Gk_{F,0}$ acts transitively on the apartments containing $F$, if $F$ and $F'$ are opposite with respect to $E$ in one apartment apartment, they will be opposite with respect to $E$ in any apartment containing them.

\medskip

Suppose $F$ and $\bar F$ are $E$-opposite, and $\ScptA = \ScptA (\Sk )$ contains $F$ and $\bar F$.  
 If $\psi$ is a virtual affine root such that $\psi=r$ on $F$ then $\psi=r$ on $\bar F$ and vice versa. Hence 
\begin{equation}\label{q-prelim-ms}
\aligned
 \Phi_{\bar F} \ = \ \Phi_{F} \ . \\
\endaligned
\end{equation}

\medskip

Let $\psi$ be a virtual affine root such that $\psi=r$ on $E$, hence its gradient is an element in $\Phi_E$.  
 We consider two possibilities for $\psi$:

\smallskip

\begin{itemize}
\item[(i)] \ $\psi$ is constant on $F$ (and on $\bar F$). Then $\psi=r$ on $F$ (and on $\bar F$), and the gradient of $\phi$ is in $\Phi_F=\Phi_{\bar F}$.

\smallskip

\item[(ii)] \ $\psi$ is nonconstant on $F$ (which is true if and only if it is nonconstant on $\bar F$).  Here we have the dichotomy that either $\psi > r$  on  $\myint (F)$
(and $\psi<r$ on $\myint (\bar F)$),  or the other way around. 
Set $\Phi_{E,F}$ to be the set of gradients of $\psi$ such that $\psi=r$ on $E$ and $\psi > r$  on  $\myint (F)$ roots. It is a choice of positive roots (determined by the facet $F$) in the root system $\Phi_{E}$.  We have 
$\Phi_{E,F} \, = \, -\Phi_{E,\bar F}$, and  $\Phi_{E}$ is the disjoint union of 
$\Phi_F=\Phi_{\bar F}$, $\Phi_{E,F}$ and $\Phi_{E,\bar F}$. 

\end{itemize}

\smallskip

\noindent As a consequence, we have:

\begin{prop}\label{iwahori-a-prelim-ms} \quad Under the hypothesis on $N$ and $r$ of Lemma \ref{bf-prelim-ms}, suppose $E$ is a (refined) facet of $\ScptB (\Gk )_{N}$, 
 $F$, and $\bar F$ are (refined) facets which are opposite with respect to $E$.  Let $\ScptA (\Sk )$ be an apartment containing $F$ and $\bar F$. 
 Then
{\rm{(Iwahori decomposition)}}
\begin{equation}\label{iwahori-b-prelim-ms}
\aligned
\Gk_{E,r}/\Gk_{E,r^{+}} \ &= \ \Gk_{F,r^{+}}/\Gk_{E,r^{+}} \ \oplus \ \Gk_{F,r}/\Gk_{F,r^{+}} \ \oplus \ \Gk_{\bar F,r^{+}}/\Gk_{E,r^{+}} \ .
\endaligned 
\end{equation}

\end{prop}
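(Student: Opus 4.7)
The plan is to derive the decomposition directly from the product decomposition \eqref{p-prelim-ms} together with the tripartite partition of $\Phi_E\cup\{0\}$ discussed immediately before the proposition. First, because $r>0$, the quotient $\Gk_{E,r}/\Gk_{E,r^{+}}$ is abelian: the Moy--Prasad filtration property gives $[\Gk_{E,r},\Gk_{E,r}]\subseteq \Gk_{E,2r}\subseteq \Gk_{E,r^{+}}$. Consequently the product in \eqref{p-prelim-ms} becomes a direct sum of the one-parameter quotients $\Xk_{\psi_{\alpha,E,r}}/\Xk_{\psi_{\alpha,\myint(E),r^{+}}}$, indexed by $\alpha\in\Phi_E\cup\{0\}$.

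Next, I would use the three-way partition $\Phi_E\cup\{0\}=(\Phi_F\cup\{0\})\sqcup \Phi_{E,F}\sqcup \Phi_{E,\bar F}$ furnished by the dichotomy preceding the proposition; that dichotomy is itself a consequence of the reflection identity $\psi\circ R_{\myAff(E)}=2r-\psi$, valid for any virtual affine $\psi$ taking value $r$ on $E$, which forces nonconstant such $\psi$ to take opposite-side values on the reflected facets $F$ and $\bar F$. Here is precisely where the hypothesis that $F$ and $\bar F$ are opposite about $E$ enters. I would then match each block of the partition with one summand on the right-hand side of \eqref{iwahori-b-prelim-ms}: for $\alpha\in\Phi_F\cup\{0\}$ the virtual affine roots in question are constant on $F$, so $\psi_{\alpha,E,r}=\psi_{\alpha,F,r}$ and $\psi_{\alpha,\myint(E),r^{+}}=\psi_{\alpha,\myint(F),r^{+}}$, and this block reconstructs $\Gk_{F,r}/\Gk_{F,r^{+}}$; for $\alpha\in\Phi_{E,F}$ the root $\psi_{\alpha,E,r}$ itself takes values $>r$ on $\myint(F)$, hence equals $\psi_{\alpha,\myint(F),r^{+}}$, and this block recovers $\Gk_{F,r^{+}}/\Gk_{E,r^{+}}$; the symmetric argument with $F$ and $\bar F$ swapped handles the block $\Phi_{E,\bar F}$ and yields $\Gk_{\bar F,r^{+}}/\Gk_{E,r^{+}}$.

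The main obstacle, and the step requiring the most care, is verifying that each of the three right-hand-side summands is supported on precisely the asserted block, with the other two blocks contributing trivially. Concretely, for $\Gk_{F,r^{+}}/\Gk_{E,r^{+}}$ one must check that $\psi_{\alpha,\myint(F),r^{+}}=\psi_{\alpha,\myint(E),r^{+}}$ whenever $\alpha\in\Phi_F\cup\{0\}$ (immediate from constancy on $F\supset E$) or $\alpha\in\Phi_{E,\bar F}$ (a short bookkeeping with the discrete $\tfrac{1}{N}\bZ$-structure of virtual affine root values, using that the next virtual affine root above $\psi_{\alpha,E,r}$ measured on $\myint(E)$ agrees with the next one measured on $\myint(F)$). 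Once these identifications are in place, the three summands on the right of \eqref{iwahori-b-prelim-ms} match the three blocks of the direct sum from the first step, completing the proof.
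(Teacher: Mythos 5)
Your argument is correct and is essentially the route the paper takes: the proposition is stated there as an immediate consequence of the product decomposition \eqref{p-prelim-ms} together with the tripartite partition of $\Phi_{E}$ into $\Phi_{F}=\Phi_{\bar F}$, $\Phi_{E,F}$ and $\Phi_{E,\bar F}$, which is exactly how you match the three summands. Your explicit justification of the dichotomy via the reflection identity $\psi\circ R_{\myAff (E)}=2r-\psi$ and the block-by-block bookkeeping with $\psi_{\alpha ,E,r}$ and $\psi_{\alpha ,\myint (E),r^{+}}$ merely fills in details the paper leaves implicit.
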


\medskip

We recall each of piece of the Iwahori decomposition \eqref{iwahori-b-prelim-ms} is a $\bF_{q} = {\mathfrak O}_{\mk}/\wp_{\mk}$ vector space.  Set

\begin{equation}\label{pontryagin-a-prelim-ms}
\aligned
V_{E} \ :&= \ \Gk_{E,r} / \Gk_{E,r^{+}} \quad {\text{\rm{(the notation suppresses the dependence on $r$)}}} \\
V_{F} \ :&= \ \Gk_{F,r} / \Gk_{F,r^{+}} \ = \ \Gk_{\bar F,r} / \Gk_{\bar F,r^{+}} \\
V_{E,F} \ :&= \ \Gk_{F,r^{+}} / \Gk_{E,r^{+}} \quad , \quad V_{E,\bar F} \ := \ \Gk_{\bar F,r^{+}} / \Gk_{E,r^{+}} \ \ .
\endaligned
\end{equation}

\bigskip
\bigskip

\medskip

    
\bigskip

\subsection{Convolution of {\,}$e_{\Gk_{E,r^{+}}}${\,} and {\,}$e_{\Gk_{F,r^{+}}}${\,} }   \quad \ 

\medskip
Henceforth we fix the natural number $N$. To simplify notation and terminology, affine roots, facets etc will mean their refined versions. 
Furthermore, if $\psi$ is an affine root and $F$ a facet $\psi >r$ on $E$ shall typically mean on the interior of $E$. 

\smallskip 
We fix a Haar measure on $\Gk$, and therefore a convolution structure on $\Cic (\Gk)$.   For any open compact subgroup $J \subset \Gk$ we define the idempotent:
\begin{equation}\label{haar-prelim-ms}
e_{J} \ := \ {\mfrac{1}{\meas ( J )}} \ 1_{J} \ .
\end{equation}

\medskip

\medskip

\begin{prop}\label{convolution-prelim-ms}  Suppose $E, F \subset \ScptB_{N}$. Let $C(E,F)$ be the combinatorial convex hull of $E$ and $F$. 
Let $D_E, D_F \subset C(E,F)$ be the unique facets so that {\ }(i) $E \subset D_{E}$, $F \subset D_{F}$, and {\ }ii) $\dim (D_{E}) = \dim (D_{F}) = \dim (C(E,F))$.  Then,
\begin{equation}\label{convolution-a-prelim-ms}
e_{\Gk_{E,r^{+}}} \ \star \ e_{\Gk_{F,r^{+}}} \ = \  e_{\Gk_{E,r^{+}}} \ \star \ 
e_{\Gk_{D_{E},r^{+}}} 
\ \star \ 
e_{\Gk_{D_{F},r^{+}}}
\ \star \ e_{\Gk_{F,r^{+}}} \ .
\end{equation}
\end{prop}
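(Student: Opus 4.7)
My first step is to reduce the right-hand side. By Lemma \ref{bf-prelim-ms}(ii), the subfacet inclusions $E\subset D_E$ and $F\subset D_F$ yield $\Gk_{E,r^+}\subset\Gk_{D_E,r^+}$ and $\Gk_{F,r^+}\subset\Gk_{D_F,r^+}$. For open compact subgroups $H\subset K$ a direct computation shows $e_H\star e_K=e_K$; applying this to collapse the outer two convolutions on the right of \eqref{convolution-a-prelim-ms}, the four-term product becomes $e_{\Gk_{D_E,r^+}}\star e_{\Gk_{D_F,r^+}}$. Hence the proposition is equivalent to
$$e_{\Gk_{E,r^+}}\star e_{\Gk_{F,r^+}}\ =\ e_{\Gk_{D_E,r^+}}\star e_{\Gk_{D_F,r^+}}.$$

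For any open compact $H,K\subset\Gk$ the elementary identity $e_H\star e_K=\dfrac{\meas(H\cap K)}{\meas(H)\,\meas(K)}\,1_{HK}$ holds. The inclusion $\Gk_{E,r^+}\Gk_{F,r^+}\subset\Gk_{D_E,r^+}\Gk_{D_F,r^+}$ is automatic from the group inclusions, and using the standard formula $\meas(HK)=\meas(H)\meas(K)/\meas(H\cap K)$ one checks that both the support equality and the scalar equality needed for the displayed identity above follow from the single index equality
$$\bigl[\Gk_{D_E,r^+}\cap\Gk_{D_F,r^+}:\Gk_{E,r^+}\cap\Gk_{F,r^+}\bigr]\ =\ \bigl[\Gk_{D_E,r^+}:\Gk_{E,r^+}\bigr]\cdot\bigl[\Gk_{D_F,r^+}:\Gk_{F,r^+}\bigr].$$
Thus the entire proof is reduced to this index equality.

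To establish it, the plan is to work in an apartment $\ScptA$ containing $C(E,F)$. Each pro-$p$ group $\Gk_{F',r^+}$ factors as a product of virtual affine root subgroups $\Xk_\psi$ over virtual affine roots $\psi$ with $\psi>r$ on the interior of $F'$, and analogously an intersection $\Gk_{F'_1,r^+}\cap\Gk_{F'_2,r^+}$ is described by those $\psi$ with $\psi>r$ on the interiors of both $F'_1$ and $F'_2$. Each quotient in the index equality is then an elementary abelian $\bF_q$-vector space whose dimension equals the cardinality of a set of virtual affine roots characterized by sign conditions on the four facets $E,F,D_E,D_F$. The Iwahori decomposition of Proposition \ref{iwahori-a-prelim-ms}, applied to facets opposite to $D_E$ and $D_F$ with respect to $E$ and $F$, provides the systematic bookkeeping for these roots.

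The main obstacle is the geometric step that makes the matching work: namely, that the uniqueness of $D_E, D_F$ from Lemma \ref{facet-prelim-ms} (derived from the property that $C(E,F)$ lies in a single closed halfspace of any virtual affine hyperplane separating two of its maximal facets) forces each virtual affine root $\psi$ contributing to $\Gk_{D_E,r^+}/\Gk_{E,r^+}$ (respectively to $\Gk_{D_F,r^+}/\Gk_{F,r^+}$) to satisfy $\psi>r$ also on $D_F$ (respectively on $D_E$). This compatibility makes the natural injection $(\Gk_{D_E,r^+}\cap\Gk_{D_F,r^+})/(\Gk_{E,r^+}\cap\Gk_{F,r^+})\hookrightarrow(\Gk_{D_E,r^+}/\Gk_{E,r^+})\times(\Gk_{D_F,r^+}/\Gk_{F,r^+})$ surjective, yielding the required index equality and completing the proof.
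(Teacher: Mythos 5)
Your proposal is correct, and it rests on exactly the same geometric input as the paper's proof, but it packages that input differently. The paper also begins by noting that the right-hand side collapses to $e_{\Gk_{D_E,r^+}}\star e_{\Gk_{D_F,r^+}}$, but it then proves the identity by convolution manipulation: it factors $\Gk_{D_E,r^+}$ and $\Gk_{D_F,r^+}$ into products of virtual affine root groups sorted according to whether the root is constant, increasing, or decreasing from $D_E$ to $D_F$, and shows that the ``decreasing'' factor of $\Gk_{D_E,r^+}$ lies in $\Gk_{E,r^+}$ while the ``increasing'' factor lies in $\Gk_{F,r^+}$, so the two middle idempotents can be inserted for free. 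You instead reduce to the elementary identity $e_H\star e_K=\meas(HK)^{-1}\,1_{HK}$ and hence to a single index equality, which you verify by counting virtual affine roots. The decisive convexity step is identical in both arguments: a virtual affine root with $\psi=r$ on $E$ and $\psi>r$ on $\myint(D_E)$ must have $\psi>r$ on $\myint(D_F)$, since otherwise the convex union of facets $\{\psi\le r\}$ would contain $E$ and $F$ and hence $C(E,F)\supset D_E$. Your route makes the normalization of the scalars completely transparent at the cost of a measure computation; the paper's avoids measures entirely but requires care with the ordering of the root-group factors. One point to spell out when writing up the surjectivity onto the product of quotients: you also need that a root contributing to $\Gk_{D_E,r^+}/\Gk_{E,r^+}$ is automatically $>r$ on $\myint(F)$ (and symmetrically), i.e.\ that no virtual affine root can equal $r$ on both $E$ and $F$ while exceeding $r$ on $D_E$ --- this follows from the same convexity observation (the hyperplane $\{\psi=r\}$ is a convex union of refined facets, so it would contain all of $C(E,F)$), and it is what makes the two sets of contributing roots disjoint and keeps a representative for one quotient from disturbing the coset in the other.
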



\medskip

\begin{proof} \ \ If $D_{E} = D_{F}$ (denote by $D$), then $E$ and $F$ are subfacets of $D$.  If $\psi$ is an affine root so that it values are greater than $r$ on either $E$, or $F$, then obviously, since $E$ and $F$ are subfacets of $D$, the values of $\psi$ on $D$ must also be greater than $r$.  The assertion \eqref{convolution-prelim-ms} follows.  \ So, we can and do assume $D_{E} \ne D_{F}$.  Set 
$$
\aligned
S \ :&= \ \{ \ \psi \ {\text{\rm{affine root}}} \ | \ \psi {\text{\rm{ has constant value on $D_{E}$ (and therefore on $D_{F}$ too)}}} \ \} \\
P \ :&= \ \{ \ \psi \ {\text{\rm{affine root}}} \ | \ \psi (D_{F}) > \psi (D_{E})  \ \} \\
N \ :&= \ \{ \ \psi \ {\text{\rm{affine root}}} \ | \ \psi (D_{F}) < \psi (D_{E})  \ \} \\
\endaligned
$$

\noindent Then, the convolution \eqref{convolution-a-prelim-ms} follows from the decompositions (in any order)
\begin{equation}\label{decomposition-a-prelim-ms}
\aligned
\Gk_{D_{E},r^{+}} \ &= \ \Big( {\underset {\tiny{\begin{matrix} {\psi \in N} \\ {\psi (D_{E}) > r} \end{matrix}}}  \prod } \ \Xk_{\psi} \ \Big) \ \Big( {\underset {\tiny{\begin{matrix} {\psi \in S} \\ {\psi (D_{E}) > r} \end{matrix}}} \prod } \ \Xk_{\psi} \ \Big) \ \Big( {\underset {\tiny{\begin{matrix} {\psi \in P} \\ {\psi (D_{E}) > r} \end{matrix}}}  \prod } \ \Xk_{\psi} \Big) \\
\Gk_{D_{F},r^{+}} \ &= \ \Big( {\underset {\tiny{\begin{matrix} {\psi \in N} \\ {\psi (D_{F}) > r} \end{matrix}}}  \prod } \ \Xk_{\psi} \ \Big) \ \Big( {\underset {\tiny{\begin{matrix} {\psi \in S} \\ {\psi (D_{F}) (=\psi (D_{E})) > r} \end{matrix}}} \prod } \ \Xk_{\psi} \ \Big) \ \Big( {\underset {\tiny{\begin{matrix} {\psi \in P} \\ {\psi (D_{F}) > r} \end{matrix}}}  \prod } \ \Xk_{\psi} \Big)  \ \ , \\
\endaligned
\end{equation}
\noindent and the inclusions
\begin{itemize}
\item[(i)]
\begin{equation}\label{decomposition-b-prelim-ms}
\Big( {\underset {\tiny{\begin{matrix} {\psi \in P} \\ {\psi (D_{E}) > r} \end{matrix}}}  \prod } \ \Xk_{\psi} \Big) \ \subset \ \Big( {\underset {\tiny{\begin{matrix} {\psi \in P} \\ {\psi (D_{F}) > r} \end{matrix}}}  \prod } \ \Xk_{\psi} \Big)
\quad{\text{\rm{and}}}\quad 
\Big( {\underset {\tiny{\begin{matrix} {\psi \in N} \\ {\psi (D_{F}) > r} \end{matrix}}}  \prod } \ \Xk_{\psi} \Big) \ \subset \ \Big( {\underset {\tiny{\begin{matrix} {\psi \in N} \\ {\psi (D_{E}) > r} \end{matrix}}}  \prod } \ \Xk_{\psi} \Big) \ .
\end{equation}

\item[(ii)]
\begin{equation}\label{decomposition-c-prelim-ms}
\Big( {\underset {\tiny{\begin{matrix} {\psi \in N} \\ {\psi (D_{E}) > r} \end{matrix}}}  \prod } \ \Xk_{\psi} \Big) \ \subset \ \Gk_{E,r^{+}} \quad{\text{\rm{and}}}\quad 
\Big( {\underset {\tiny{\begin{matrix} {\psi \in P} \\ {\psi (D_{E}) > r} \end{matrix}}}  \prod } \ \Xk_{\psi} \Big) \ \subset \ \Gk_{F,r^{+}}
\end{equation}
\end{itemize}
\noindent The inclusions of \eqref{decomposition-b-prelim-ms} are obvious.  To check the first inclusion of \eqref{decomposition-c-prelim-ms}, we need to show that $\psi (E) > r$.  If not, then $\psi (E) = r$.  Since $\psi \in N$, if follows that $r > \psi (D_{F})$; hence both $E$ and $F$ are contained in the half-space $\psi \le r$, while $D_{F}$ is not.  This contradicts $D_{F}$ being in the convex hull of $E$ and $F$.  Similarly for the second inclusion.
\end{proof}

\medskip

\noindent {\sc Remarks}: \quad (i) \  Since $E \subset D_{E}$, we have $\Gk_{D_{E},r^{+}} \supset \Gk_{E,r^{+}}$, and therefore $( e_{\Gk_{E,r^{+}}} \star  e_{\Gk_{D_{E},r^{+}}} ) = e_{\Gk_{D_{E},r^{+}}}$.  Similarly $( e_{\Gk_{D_{F},r^{+}}} \star  e_{\Gk_{F,r^{+}}} ) = e_{\Gk_{D_{F},r^{+}}}$.

\smallskip

\noindent (ii) \ Although not needed here, we note the proof of Proposition \eqref{convolution-prelim-ms} in fact shows that if $K$ is a subfacet of $C(E,F)$, then 
$$
e_{\Gk_{D_{E},r^{+}}} \ \star \ e_{\Gk_{D_{F},r^{+}}} \ = \ e_{\Gk_{D_{E},r^{+}}} \ \star \ e_{\Gk_{K},r^{+}} \ \star \  e_{\Gk_{D_{F},r^{+}}} \ .
$$

\vskip 0.70in 
 

\section{Representations of $\Gk_{E,r}/\Gk_{E,r^{+}}$}\label{pontryagin-ms}

\medskip

\subsection{Parabolic inflation and cuspidal characters} \ \quad

\smallskip

Let $V$ be a finite dimensional $\bF_{q}$ vector space, and  $V^{*} = \Hom_{\bF_{q}}(V ,  \bF_{q} )$ denote the dual space.  The choice of a non-trivial additive character $\chi$ of $\bF_{q}$ determines an identification of the Pontryagin dual  $\widehat{V}$ and the dual space $V^{*}$

\begin{equation}\label{dual-pontryagin-ms}
\aligned
V^{*} \ &\xrightarrow{\qquad } \ \widehat{V} \\
A \ &\xrightarrow{\qquad } \ \chi_{A} \, = \, \chi \circ A \ .
\endaligned
\end{equation}
\smallskip

Suppose $N \in \bN_{+}$, $r \in {\frac{1}{N}}\bN_{+}$, $E \subset \ScptB (\Gk )_N$ and $F, \, \bar F \, \in \myVsph (E)$ are opposite with respect to $E$.  
  The projection maps of the Iwahori decomposition

\begin{equation}\label{direct-pontryagin-ms}
\begin{matrix}
{\Gk_{E,r}/\Gk_{E,r^{+}}}  &=  &{\Gk_{F,r^{+}}/\Gk_{E,r^{+}}}  &\oplus  &{\Gk_{F,r}/\Gk_{F,r^{+}}}  &\oplus  &{\Gk_{\bar F,r^{+}}/\Gk_{E,r^{+}}} \\
\verteq  &\  &\verteq   &\   &\verteq  &\   &\verteq   &\  \\
V^{r}_{E}     &\  &V^{r}_{E,F}    &\   &V^{r}_{F}     &\   &V^{r}_{E,\bar F}   &\  \
\end{matrix}
\end{equation}

\medskip

\noindent to the subspaces $V^{r}_{E,F}$, $V^{r}_{F}$ and $V^{r}_{E,\bar F}$ give a natural identification
 of the dual spaces $(V^{r}_{E,F})^{*}$, $(V^{r}_{F})^{*}$, and $(V^{r}_{E,\bar F})^{*}$ as subspaces of $(V^{r}_{E})^{*}$ so that 
\begin{equation}\label{decomp-pontryagin-ms}
(V^{r}_{E})^{*} \ = \ (V^{r}_{E,F})^{*} \ \oplus \ (V^{r}_{F})^{*} \ \oplus \ (V^{r}_{E,\bar F})^{*} \ .
\end{equation}

\noindent Given $A \in (V^{r}_{E})^{*}$, we write its \eqref{decomp-pontryagin-ms} decomposition as:
\begin{equation}\label{decomp-2-pontryagin-ms}
A \ = \ A_{E,F} \ + \ A_{F} \ + \ A_{E,\bar F} \ .
\end{equation}

\smallskip


\begin{defn}\label{para-inflated-pontryagin-ms}
 Suppose $N \in \bN_{+}$, $r \in {\frac{1}{N}} \bN_{+}$, and $E \in \ScptB (\Gk )_{N}$.  Define $A \in (V^{r}_{E})^{*}$ to be {\color{\myblue} \it{parabolically inflated}} from a pair $(F,B)$ consisting of a facet $F \in \myVsph (E) \cup \{ E \}$ and $B \in  V^{*}_{F}$, if:

\smallskip 

\begin{itemize}
\item[(i)] \  The restriction of the character $\chi_{A} \in \reallywidehat{\Gk_{E,r} / \Gk_{E,r^{+}} }$ to $\Gk_{F,r{+}}/\Gk_{E,r^{+}}$ is trivial; therefore 
$\chi_{A}$ is the inflation of a character of $\Gk_{E,r}/\Gk_{F,r^{+}}$.
\smallskip
\item[(ii)]  \ The restriction of $\chi_{A}$ to $\Gk_{F,r}/\Gk_{F,r^{+}} (= V_{F}$) is $\chi_{B}$.  
\end{itemize}

\smallskip

\noindent If the facet $F$ belongs to $\myVsph (E)$, the above is equivalent to the existence of a facet $\bar F \in \myVsph (E)$, which is $E$-opposite to $F$, so that  the decomposition \eqref{decomp-2-pontryagin-ms} of $\chi_{A}$, has $A_{E,F} = 0$, and $A_{F} = B$, i.e., 
$$
 A \ = \ B + A_{E,\bar F} \ \ \ {\text{\rm{with \ $A_{E,\bar F} \in V^{*}_{E,\bar F}$}}} \  . 
$$
\end{defn}

\noindent We observe the property of being parabolically inflated is transitive, i.e., if $E \subset F \subset H$ are facets of $\ScptB (\Gk )_{N}$, and \ 
(i) $A \in V^{*}_{E}$ is parabolically inflated from $B \in V^{*}_{F}$ and \ (ii) $B$ is parabolically inflated from $C \in V^{*}_{H}$, then $A$ is parabolically inflated from $C \in V^{*}_{H}$.

\medskip

\begin{defn}\label{cuspidal-pontryagin-ms}
Define $A \in (V^{r}_{E})^{*}$ to be {\color{\myblue}{\it{non-cuspidal}}} if there exists facet $F \in \myVsph (E)$, i.e., $F \supsetneq E$, so that $A$ is parabolically induced from $V_{F}$, and {\color{\myblue}{\it{cuspidal}}} otherwise.  
\end{defn}

\noindent Equivalently, as a character on $\Gk_{E,r}$, $A$ is cuspidal if for any facet $F$ properly containing $E$, 
$$
{\text{\rm{the integral}}} \ \ \ \int_{\Gk_{F,r^{+}}} \ \chi_{A} (u) \, du \ \ \ {\text{\rm{vanishes}}} \ .  
$$

\vskip 0.30in

\noindent{\sc{Remarks and Examples}}.
\smallskip
\begin{itemize}
\item[(i)]  When $N=1$, $E \subset \ScptB (\Gk )$, and depth $r \in \bN_{+}$, the quotient group $\Gk_{E,r}/\Gk_{E,r^{+}}$ is isomorphic with the finite field Lie algebra $\Lie (\Gk_{E,0}/\Gk_{E,0^{+}} )$ of the group $\Gk_{E,0}/\Gk_{E,0^{+}}$,   The existence of a cuspidal character $A$ is equivalent to existence on an element in $\Lie (\Gk_{E,0}/\Gk_{E,0^{+}} )$ which does not lie in any proper parabolic subalgebra.

\medskip
 
\item[(ii)]  As an example, take the group $\SL (3)$

\begin{figure}[ht]
  \begin{center}
\includegraphics[height=5cm, angle=0]{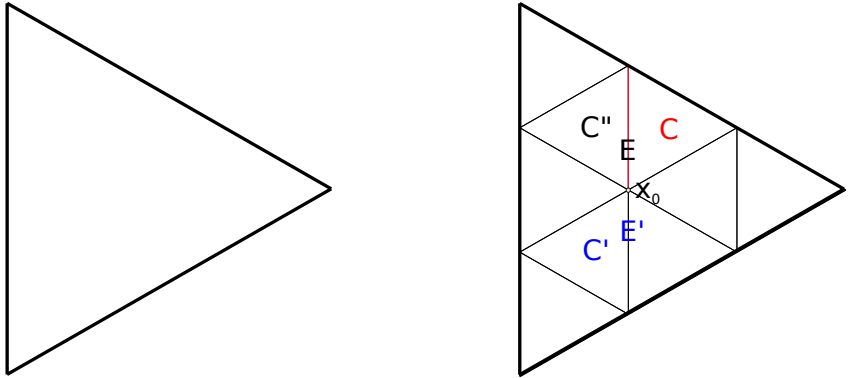} 
\caption{A2 chamber and refined A2 chamber ($N=3$)}
\label{figure-pontryagin-ms}
  \end{center}
\end{figure}

\item[(ii.1)] \ Take the point {\,}$x_0${\,} ({\,}$\alpha (x_0) = {\frac{1}{3}}$, $\beta (x_0) = {\frac{1}{3}}{\,})$, and depth $r = {\frac13}$.  \ \  Then:

$$
{\Gk_{x_{0},0}/\Gk_{x_{0},0^{+}}} = \left[ \begin{matrix} {\square} &{\quad } &{\quad } \\ {\quad } &{\square} &{\quad } \\ {\quad } &{\quad } &{\square} \end{matrix} \right] \quad , \quad 
{\Gk_{x_{0},{\frac{1}{3}}}/\Gk_{x_{0},{\frac{1}{3}}^{+}}} = \left[ \begin{matrix} {\quad } &{\square} &{\quad } \\ {\quad } &{\quad } &{\square} \\ {\square} &{\quad } &{\quad } \end{matrix} \right] \quad , \quad 
$$

\noindent with the boxed matrix entries indicating where the quotient is nontrivial.  \  For the $x_{0}$-opposite chambers ${\color{red}{C}}$, ${\color{blue}{C'}}$, the Iwahori decomposition of ${\Gk_{x_{0},{\frac{1}{3}}}/\Gk_{x_{0},{\frac{1}{3}}^{+}}}$ is: 
$$
{\Gk_{x_{0},{\frac{1}{3}}}/\Gk_{x_{0},{\frac{1}{3}}^{+}}} \ = \ {\color{red}{\left[ \begin{matrix} {\quad} &{\square} &{\quad} \\ {\quad} &{\quad} &{\square} \\{\quad} &{\quad} &{\quad} \end{matrix} \right]}} \, \oplus \, 
\{ 0 \} \, \oplus \, 
{\color{blue}{\left[ \begin{matrix} {\quad} &{\quad} &{\quad} \\ {\quad} &{\quad} &{\quad} \\{\square} &{\quad} &{\quad} \end{matrix} \right]}}
$$

\noindent For $x_{0}$-opposite edges ${\color{red}{E}}$, ${\color{blue}{E'}}$, the Iwahori decomposition is: 
$$
{\Gk_{x_{0},{\frac{1}{3}}}/\Gk_{x_{0},{\frac{1}{3}}^{+}}} \ = \ 
{\color{red}{\left[ \begin{matrix} {\quad} &{\quad } &{\quad} \\{\quad} &{\quad} &{\square} \\{\quad} &{\quad} &{\quad} \end{matrix} \right]}} \, \oplus \, 
\left[ \begin{matrix} {\quad} &{\square} &{\quad} \\ {\quad} &{\quad} &{\quad} \\{\quad} &{\quad} &{\quad} \end{matrix} \right] \, \oplus \, 
{\color{blue}{\left[ \begin{matrix} {\quad} &{\quad} &{\quad} \\ {\quad} &{\quad} &{\quad} \\ {\square} &{\quad} &{\quad} \end{matrix} \right]}}
$$

\item[(ii.2)] \ Take the facet {\,}$E${\,} ({\,}$\alpha (x_0) = {\frac{1}{3}}$, ${\frac{1}{3}} <\beta (x_0) < {\frac{2}{3}}{\,})$.  \ \  Then:

$$
{\Gk_{E,{\frac{1}{3}}}/\Gk_{E,{\frac{1}{3}}^{+}}} \ = \ \left[ \begin{matrix} {\quad } &{\square} &{\quad } \\ {\quad } &{\quad } &{\quad } \\ {\quad } &{\quad } &{\quad } \end{matrix} \right] \quad .
$$

\noindent For $E$-opposite chambers ${\color{red}{C}}$, ${\color{blue}{C''}}$, we have 
$$
{\Gk_{E,{\frac{1}{3}}}/\Gk_{E,{\frac{1}{3}}^{+}}} \ = \ 
{\color{red}{\left[ \begin{matrix} {\quad} &{\square} &{\quad} \\ {\quad} &{\quad} &{\quad} \\{\quad} &{\quad} &{\quad} \end{matrix} \right]}} \, \oplus \, 
\{ 0 \} \, \oplus \, 
{\color{blue}{\left[ \begin{matrix} {\quad} &{\quad} &{\quad} \\ {\quad} &{\quad} &{\quad} \\{\quad} &{\quad} &{\quad} \end{matrix} \right]}}
$$

\noindent  Here, every $A \in V^{*}_{E}$ is non-cuspidal.  
 
\smallskip

\end{itemize}

\medskip

\bigskip

\begin{prop}\label{property-pontryagin-ms} \quad Suppose $(E,B)$ is a cuspidal pair.

\smallskip

\begin{itemize}
\item[(i)] \ \ Take any apartment $\ScptA$ containing $E$.  

Suppose $\xi$ is a nonconstant affine function so that the hyperplane $H_{\xi}$ contains $E$; i.e., $\mygrad (\xi ) \, \perp \, E$. Then, there exists an affine root $\psi$ which have constant value $r$ on $E$, and 
\smallskip 
\begin{itemize}
\item[(i.1)]  \ $\langle \mygrad (\xi ) , \mygrad (\psi ) \rangle \, > \, 0$, 

\medskip

\item[(i.2)] \ $\Xk_{(\psi-r)}/\Xk_{(\psi-r)^{+}}$ is non-zero and $\chi_B$ restricted to $\Xk_{(\psi-r)}$ is non-trivial.
\end{itemize}

\medskip

\noindent Conversely, if $B \in (V^{r}_{E})^{*}$ satisfies the above, then $(E,B)$ is a cuspidal pair.

\medskip
\item[(ii)]  \ \ Under the inclusion $V^{*}_{E} \hookrightarrow V^{*}_{E} \otimes_{\bF_{q}} (\overline{\bF_{q}}) = \Hom_{\overline{\bF_{q}}}(\Gk^{\myun}_{E,r}/\Gk^{\myun}_{E,r^{+}} , \overline{\bF_{q}})$, the $\Gk^{\myun}_{E,0}/\Gk^{\myun}_{E,0^{+}}$-orbit of $B$ in $\Hom_{\overline{\bF_{q}}}(\Gk^{\myun}_{E,r}/\Gk^{\myun}_{E,r^{+}} , \overline{\bF_{q}})$ is closed.  
\end{itemize}
\end{prop}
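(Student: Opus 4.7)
\emph{Plan for part \textup{(i)}.} Fix an apartment $\ScptA(\Sk)$ containing $E$ and a base point $x_0 \in \myint(E)$; let $V_\perp$ denote the orthogonal complement in the direction space of $\ScptA$ of the tangent directions to $E$. By the decomposition in \eqref{p-prelim-ms}, the space $V^r_E := \Gk_{E,r}/\Gk_{E,r^+}$ is a direct sum of one-dimensional summands indexed by the affine roots $\psi$ with $\psi(E) = r$ and with $\Xk_{(\psi-r)}/\Xk_{(\psi-r)^+}$ non-zero; let $S_B \subset \Phi_E$ denote the set of gradients $\mygrad(\psi)$ on which $\chi_B$ restricts non-trivially. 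The dictionary to be set up is that the facets $F \supsetneq E$ in $\ScptB_N$ correspond bijectively to the open cones $C_F$ of the linear hyperplane arrangement on $V_\perp$ cut by $\mygrad(\psi)^\perp$ (for the $\psi$ above), and moreover that $\psi > r$ on $F$ is equivalent to $\langle \mygrad(\psi), w\rangle > 0$ for some (equivalently every) $w \in C_F$. This equivalence holds because each $C_F$ is a cell of the arrangement and therefore lies strictly on one side of each $\mygrad(\psi)^\perp$ it does not contain.

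\emph{The two implications of \textup{(i)}} now follow in parallel. For cuspidality $\Rightarrow$ the stated condition: given $\mygrad(\xi) \in V_\perp \setminus \{0\}$, let $F^{\ast}$ be the facet whose cone $C_{F^{\ast}}$ contains $\mygrad(\xi)$ in its relative interior; cuspidality at $F^{\ast}$ produces $\mygrad(\psi) \in S_B$ with $\psi > r$ on $F^{\ast}$, so $\langle \mygrad(\xi), \mygrad(\psi)\rangle > 0$. Conversely, given $F \supsetneq E$, pick $w_F$ in the relative interior of $C_F$; applying the hypothesis to $\mygrad(\xi) = w_F$ yields $\psi$ with $\langle w_F, \mygrad(\psi)\rangle > 0$, so by the dictionary $\psi > r$ throughout $F$ and $\chi_B$ is non-trivial on $\Gk_{F, r^+}/\Gk_{E, r^+}$.

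\emph{Plan for part \textup{(ii)}.} Let $G := \Gk^{\myun}_{E,0}/\Gk^{\myun}_{E,0^+}$, a connected reductive group over $\overline{\bF_q}$, acting on $V^*_E \otimes_{\bF_q} \overline{\bF_q}$. The claim reduces, via the Hilbert--Mumford criterion, to showing that for every non-trivial one-parameter subgroup $\lambda\colon \mathbb{G}_m \to G$ the limit $\lim_{t\to 0}\lambda(t)\cdot B$ fails to exist. After conjugating $\lambda$ into the maximal torus arising from an apartment $\ScptA(\Sk)$ containing $E$, identify $\lambda$ with a non-zero vector $v_\lambda \in V_\perp$. The dictionary from part \textup{(i)} aligns the $\lambda$-weight decomposition with the Iwahori decomposition: $V_{\lambda,>0} = V^r_{E, F_\lambda}$, where $F_\lambda$ is the facet with $v_\lambda$ in the relative interior of $C_{F_\lambda}$. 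The limit exists exactly when $B$ vanishes on $V^r_{E, F_\lambda}$, equivalently when $B$ is parabolically inflated from $F_\lambda$, which is ruled out by cuspidality.

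\emph{Main obstacle.} The core technical work lies entirely in the arrangement-theoretic dictionary: identifying refined facets $F \supsetneq E$ with cones of the local hyperplane arrangement on $V_\perp$, and confirming that $\mygrad(\psi)^\perp$ genuinely belongs to this arrangement for every $\mygrad(\psi) \in S_B$ (which is the integrality $\mygrad(\psi)(x_0) \in \bZ$ implicit in \textup{(i.2)}, ensuring $\psi$ corresponds to an actual affine root). Once the dictionary is in place, part \textup{(i)} is elementary convex geometry and part \textup{(ii)} is a direct application of Hilbert--Mumford.
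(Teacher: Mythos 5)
For part (i) your argument is, modulo vocabulary, the paper's own proof: the paper takes the minimal facets $F,\bar F$ entered by moving off an interior point of $E$ in the directions $\pm\mygrad(\xi)$ (these are exactly your cones), and reads the conclusion off the nontriviality of $\chi_B$ on the summand $V^{r}_{E,F}=\Gk_{F,r^{+}}/\Gk_{E,r^{+}}$ of the Iwahori decomposition \eqref{iwahori-b-prelim-ms}; the converse is likewise identical (a $\xi$ whose gradient ``moves generically along $F$''). Two imprecisions in your dictionary are worth noting, though neither damages the argument: the facets $F\supsetneq E$ correspond to \emph{all} faces of the local arrangement, not only the open cones; and the arrangement cut only by those $\psi$ with $\Xk_{(\psi-r)}/\Xk_{(\psi-r)^{+}}\neq 0$ can be strictly coarser than the true local simplicial structure at $E$ (see the paper's $\SL(3)$ example (ii.2), where a wall through $E$ carries no nonzero root-space quotient), so $F\mapsto C_F$ need not be injective. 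Since whether $\chi_B$ is trivial on $V^r_{E,F}$ depends only on the coarser cone, the proof of (i) survives.

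For part (ii) the paper gives no argument beyond citing Kempf, so you are supplying detail it omits; but your reduction misstates the Hilbert--Mumford/Kempf criterion, and the misstatement is not harmless. Closedness of the orbit of $B$ is \emph{not} equivalent to the nonexistence of $\lim_{t\to 0}\lambda(t)\cdot B$ for every nontrivial $\lambda$: whenever the stabilizer of $B$ contains a nontrivial torus such limits exist (and equal $B$). That situation is unavoidable here — any one-parameter subgroup of $\Gk^{\myun}_{E,0}/\Gk^{\myun}_{E,0^{+}}$ whose projection to $V_\perp$ vanishes (e.g.\ one tangent to $E$, or central when $\Phi_E$ does not span the apartment directions) acts trivially on $V^{r}_{E}$ and hence fixes $B$. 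The correct criterion is that the orbit is closed iff every limit that \emph{does} exist already lies in the orbit. Your weight computation handles the $\lambda$ inducing a nontrivial grading on $V^r_E$ — cuspidality makes $B$ nonvanishing on both $V^{r}_{E,F_\lambda}$ and $V^{r}_{E,\bar F_\lambda}$, so no limit exists — but you must add the complementary case, where the grading is trivial and the limit is $B$ itself, hence trivially in the orbit. With that one additional observation the argument closes.
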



\begin{proof} \quad  To prove assertion (i), in the given apartment $\ScptA$, set: 
$$
\aligned
F \ :&= \ \begin{cases}
\begin{tabular}{p{4.5in}}
the minimal facet containing $E$ so that for any interior point $y$ of $E$, the point {\,}$y \, + \, t \, \mygrad (\xi)  \in F${\,} for $t > 0$ sufficiently small,\\
\end{tabular}
\end{cases} \\
\bar F \ :&= \ \begin{cases}
\begin{tabular}{p{4.5in}}
the minimal facet containing $E$ so that for any interior point $y$ of $E$, the point {\,}$y \, + \, t \, \mygrad (\xi)  \in F${\,} for $t < 0$ sufficiently small. \\
\end{tabular}
\end{cases} \\
\endaligned
$$
The facets $F$ and $\bar F$ are $E$-opposite facets and provide an Iwahori decomposition of $V^{r}_{E}$ as in \eqref{decomp-pontryagin-ms}.  The cuspidality hypothesis on $(E,B)$ means $\chi_{B}$ is not trivial on $V^{r}_{E,F}$ (and $V^{r}_{E,{\overline{F}}}$ too).  The existence of the affine root $\psi$ satisfying (i.1) and (1.2) follows.  
\smallskip

To prove the converse, suppose $(E,B)$ satisfies the assumption of (i) (for any apartment $\ScptA$ that contains $E$).   If $F$ is a facet properly containing $E$, let $\xi$ be an affine functional which is constant on $E$, and ${\mygrad}(\xi )$ moves generically along $F$, i.e., for $x$ an interior point of $E$, then $x + t \mygrad (\xi )$ is in the interior of $F$ for small positive $t$.  Let $\psi$ be an affine root satisfying (i.1) and (i.2).  The hypothesis (i.1) that $\langle \mygrad (\xi ) , \mygrad (\psi ) \rangle > 0$ means $(\psi - r) > 0$ on $F$, so $\Xk_{(\psi - r)} \subset \Gk_{F,r^{+}}$.  That hypothesis (i.2) that $\chi_{B}$ restricted to $\Xk_{(\psi - r)}$ is nontrivial, means $\chi_{B}$ is not inflated from $\Gk_{E,r}/\Gk_{F,r^{+}}$.   As this is true for all $F \supsetneq E$, we deduce $(E,B)$ is a cuspidal pair.

\medskip

Assertion (ii) is a consequence of the 2nd fundamental result proved in [{\reKea}]. 
\end{proof}

\bigskip

The following noncuspidality Lemma also follows from the 2nd fundamental result proved in [{\reKea}]. 

\begin{lemma} Suppose $E \subset \ScptB_{N}$, and $B \in V^{*}_{E} \hookrightarrow V^{*}_{E} \otimes_{\bF_{q}} (\overline{\bF_{q}}) = \Hom_{\overline{\bF_{q}}}(\Gk^{\myun}_{E,r}/\Gk^{\myun}_{E,r^{+}} , \overline{\bF_{q}})$.  If there is a exists a one-parameter multiplicative subgroup $\lambda$ of $\Gk^{\myun}_{E,0}/\Gk^{\myun}_{E,0^{+}}$ which centralizes $B$, and acts nontrivially on $V^{*}_{E}$, then the pair $(F,B)$ is not cuspidal.
\end{lemma}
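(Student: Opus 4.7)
The plan is to translate the existence of $\lambda$ into a facet $F \supsetneq E$ in $\ScptB(\Gk)_N$ witnessing non-cuspidality of $(E,B)$ (I am reading the pair $(F,B)$ in the statement as a typo for $(E,B)$). Write $G_E := \Gk^{\myun}_{E,0}/\Gk^{\myun}_{E,0^{+}}$; this is a connected reductive $\overline{\bF_q}$-group whose root system is the $\Phi_E$ of \eqref{constant-prelim-ms}, acting by conjugation on the additive group $V^{r}_{E}\otimes_{\bF_q}\overline{\bF_q}$. The one-parameter subgroup $\lambda$ provides a $\bZ$-grading
\[
V^{r}_{E}\otimes_{\bF_q}\overline{\bF_q} \;=\; \bigoplus_{n\in\bZ} V[n],
\qquad
V^{*}_{E}\otimes_{\bF_q}\overline{\bF_q} \;=\; \bigoplus_{n\in\bZ} V^{*}[n],
\]
and the two hypotheses of the lemma read $B \in V^{*}[0]$ (centralization) and $V^{*}[n] \neq 0$ for some $n \neq 0$ (nontriviality).

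First I would manufacture the facet $F$ geometrically. Choose a maximal $\mkun$-split torus $\mS$ whose apartment $\ScptA(\Sk^{\myun}) \subset \ScptB(\Gk^{\myun})_{N}$ contains $E$ and realizes a representative of $\lambda$ in the cocharacter lattice $X_{*}(\mS)$. Since roots in $\Phi_E$ pair trivially with tangent directions of $E$, the hypothesis that some $\langle\alpha,\lambda\rangle \neq 0$ with $\alpha \in \Phi_E$ forces $\lambda$, viewed as a real direction in the apartment, to have nonzero normal component at $E$. Let $F$ be the facet of $\ScptB(\Gk^{\myun})_N$ entered by moving from the interior of $E$ infinitesimally in the direction of $\lambda$, and let $\bar F$ be its $E$-opposite, obtained by moving in the opposite direction. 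The second fundamental result of Kempf cited in the section ensures that since $B$ is $\bF_q$-rational, the canonical destabilizing datum is Galois-stable; hence $F$ descends to a facet of $\ScptB(\Gk)_N$.

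The main technical step---and what I expect to be the principal obstacle---is to align the $\lambda$-weight filtration with the Iwahori decomposition \eqref{iwahori-b-prelim-ms}. Using \eqref{p-prelim-ms}, each summand $\Xk_{\psi_{\alpha,E,r}}/\Xk_{\psi_{\alpha,\myint(E),r^{+}}}$ is a one-dimensional $\overline{\bF_q}$-eigenspace for $\lambda$ of weight $\langle\alpha,\lambda\rangle$, and for the $F,\bar F$ chosen above, the partition $\Phi_E = \Phi_F \sqcup \Phi_{E,F} \sqcup \Phi_{E,\bar F}$ of \eqref{q-prelim-ms} coincides with the partition of $\Phi_E$ by the sign of $\langle\alpha,\lambda\rangle$; the $\alpha = 0$ torus piece is tautologically $\lambda$-weight zero. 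After base change one therefore gets
\[
V[0] \;=\; V^{r}_{F}\otimes\overline{\bF_q},\qquad V_{>0} \;=\; V^{r}_{E,F}\otimes\overline{\bF_q},\qquad V_{<0} \;=\; V^{r}_{E,\bar F}\otimes\overline{\bF_q},
\]
and dually $V^{*}[0] = (V^{r}_{F})^{*}\otimes\overline{\bF_q}$.

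Once this matching is in hand the conclusion is immediate: $B \in V^{*}[0] = (V^{r}_{F})^{*}\otimes\overline{\bF_q}$ forces $B_{E,F} = 0$ and $B_{E,\bar F} = 0$ in the decomposition \eqref{decomp-2-pontryagin-ms}, so $B = B_F$. By Definition \ref{para-inflated-pontryagin-ms}, $B$ is parabolically inflated from $(F, B_F)$, and hence $(E,B)$ is not cuspidal. All content is thus concentrated in the weight/Iwahori identification; everything else is formal bookkeeping once the Kempf--Rousseau theory has been invoked.
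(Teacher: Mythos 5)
Your proposal is correct and follows essentially the same route as the paper: the authors likewise convert the $\lambda$-weight triangular decomposition of $V^{*}_{E}$ into the Iwahori decomposition attached to the pair of $E$-opposite facets obtained by moving from an interior point of $E$ infinitesimally in the $\pm\lambda$ directions, and then read off that $B$, lying in the weight-zero piece $(V^{r}_{F})^{*}$, is parabolically inflated from $F$. Your write-up is in fact more careful than the paper's (which is silent on the rationality/descent of $F$ and on the explicit matching of weights with the partition $\Phi_{F}\sqcup\Phi_{E,F}\sqcup\Phi_{E,\bar F}$), but no new idea is involved.
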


\begin{proof} \quad  The non-triviality of action of $\lambda$  on $V^{*}_{E}$ gives a nontrivial triangular decomposition $V^{*}_E \, = \, (V^{*}) \oplus (V^{*})^{0} \oplus (V^{*})^{-}$  where $(V^{*})^{0}$ is the fixed points of $\lambda$ and  $\lambda (t)$  acts as by positive powers of $t$ on $(V^{*})^{+}$ and negative on $(V^{*})^{-}$.  But, this decomposition arises from a pair of opposite chambers containing $E$, namely, take a point $x$ in the interior of $E$ and move along the line through $x$ slightly in the direction $\lambda$ (in both positive and negative direction) and this will give the interior points of $F$ and its $E$-opposite.
\end{proof}

\bigskip

Fix $N \in \bN_{+}$, and $r \in {\frac{1}{N}}\bN_{+}$.  For any pair $(E,A)$  of depth $r$, let $e_{E,A}$ denote the idempotent 
\begin{equation}\label{idem-char-pontryagin-ms}
e_{E,A} = \begin{cases} 
{\ } {\frac{1}{\meas (\Gk_{E,r} )}} \ \chi_{A} \quad &{\text{\rm{on $\Gk_{E,r}$}}} {\ }_{\begin{matrix} \ \ \end{matrix}}  \\
{\ }0 \quad &{\text{\rm{off $\Gk_{E,r}$}}}  {\ }^{\begin{matrix} \ \ \end{matrix}}
\end{cases}
\end{equation}
\noindent When the  character $\chi$ is given, without the corresponding $A$, we shall write $e_{E,\chi}$.

\begin{prop}\label{uniqueness-pontryagin-ms} \quad Given $A \in V^{*}_{E}$, 
there exists a cuspidal pair $(F,B)$ so that $A$ is parabolically inflated from the pair $(F,B)$.  In such a situation, 
$$
e_{E,A} \, \star \, e_{F,B} \ =  \ e_{E,A}. 
$$

\end{prop}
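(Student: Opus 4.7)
The plan splits naturally into two essentially independent parts: first, produce a cuspidal pair $(F,B)$ from which $A$ is parabolically inflated; second, verify the convolution identity.

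For existence I would iterate Definition \ref{cuspidal-pontryagin-ms}, exploiting the transitivity of parabolic inflation recorded after Definition \ref{para-inflated-pontryagin-ms}. Set $(F_0,B_0) := (E,A)$, and inductively test whether $B_i$ is cuspidal on $F_i$. If it is, stop and take $(F,B) := (F_i,B_i)$. Otherwise, non-cuspidality produces a facet $F_{i+1}\supsetneq F_i$ and some $B_{i+1}\in V^{*}_{F_{i+1}}$ such that $B_i$ is parabolically inflated from $(F_{i+1},B_{i+1})$; transitivity then upgrades this to the statement that $A$ itself is parabolically inflated from $(F_{i+1},B_{i+1})$. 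The sequence $\dim F_0<\dim F_1<\dim F_2<\cdots$ is bounded above by $\ell=\myrank(\mG)$, so the procedure terminates and delivers the desired cuspidal pair.

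For the convolution identity I would compute $(e_{E,A}\star e_{F,B})(g)$ directly. By Lemma \ref{bf-prelim-ms}, the inclusion $E\subset F$ yields $\Gk_{F,r}\subset\Gk_{E,r}$, so the support of the convolution lies in $\Gk_{E,r}\,\Gk_{F,r}=\Gk_{E,r}$. For $g\in\Gk_{E,r}$, writing $h=gk^{-1}$ with $k\in\Gk_{F,r}$ produces
\[
(e_{E,A}\star e_{F,B})(g)\;=\;\frac{\chi_A(g)}{\meas(\Gk_{E,r})\,\meas(\Gk_{F,r})}\int_{\Gk_{F,r}} \chi_A(k^{-1})\,\chi_B(k)\,dk,
\]
where $\chi_B$ is understood as its inflation to $\Gk_{F,r}$. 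The definition of parabolic inflation is precisely what collapses this integral: condition (i) forces $\chi_A$ to be trivial on $\Gk_{F,r^+}$, and condition (ii) identifies the resulting character of $\Gk_{F,r}/\Gk_{F,r^+}$ with $\chi_B$, so $\chi_A|_{\Gk_{F,r}}=\chi_B$. The integrand is then identically $1$, the integral evaluates to $\meas(\Gk_{F,r})$, and the convolution collapses to $\chi_A(g)/\meas(\Gk_{E,r})=e_{E,A}(g)$, as required.

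No serious obstacle is expected. The existence half is essentially a dimension-increasing search bounded by the rank, while the convolution half reduces to a one-line calculation once parabolic inflation is unpacked. The only place requiring care is the bookkeeping of the nested chain $\Gk_{E,r^+}\subset\Gk_{F,r^+}\subset\Gk_{F,r}\subset\Gk_{E,r}$ from Lemma \ref{bf-prelim-ms}, which is used both to locate the support of $e_{E,A}\star e_{F,B}$ and to reinterpret the restriction of $\chi_A$ to $\Gk_{F,r}$ as the inflation of $\chi_B$.
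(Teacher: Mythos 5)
Your proposal is correct and follows essentially the same route as the paper: existence comes from transitivity of parabolic inflation together with a maximality/termination argument on the dimension of the facet (the paper picks a facet of maximal dimension directly rather than iterating, but this is the same idea), and cuspidality follows from that maximality. Your explicit computation of the convolution integral is exactly the orthogonality-of-characters argument the paper invokes in one line, using $\chi_A|_{\Gk_{F,r}}=\chi_B$ to make the integrand identically $1$.
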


\smallskip

\begin{proof} \quad Use the transitivity of parabolic inflation, to deduce that there is a facet $F$ of maximal dimension so that $A$ is parabolically inflated from some $B \in V^{*}_{F}$.  That $B$ is cuspidal is because of the maximality assumption on $\dim (F)$.  The convolution formula is the orthogonality or characters $\widehat{V_{E}}$.

\end{proof}

\bigskip

\subsection{Some convolutions}\label{some-convolutions-pontryagin-ms}

We now come to the main result of this section. 

\medskip

\begin{prop}\label{key-prop-pontryagin-ms} Suppose $(E,A)$ and $(F,B)$ are two cuspidal pairs (of depth $r>0$).  If 
$e_{E,A} \ \star \ e_{F,B} \neq 0$ then $E$ and $F$ are aligned and $A=B$. 


\end{prop}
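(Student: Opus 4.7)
The plan is to handle the statement in two phases: first use the cuspidality hypotheses together with Proposition~\ref{convolution-prelim-ms} to force $E$ and $F$ to be aligned, and then use character orthogonality inside $H:=\Gk_{E,r}\cap\Gk_{F,r}$ to force $A=B$.

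\textbf{Phase 1 (alignment).} Let $D_E, D_F\subset C(E,F)$ be the maximal facets provided by Lemma~\ref{facet-prelim-ms}. Using $e_{E,A}=e_{E,A}\star e_{\Gk_{E,r^+}}$ and its analogue on the right, applying Proposition~\ref{convolution-prelim-ms}, and absorbing via $e_{\Gk_{E,r^+}}\star e_{\Gk_{D_E,r^+}}=e_{\Gk_{D_E,r^+}}$ (the inclusion $\Gk_{E,r^+}\subset\Gk_{D_E,r^+}$ is provided by Lemma~\ref{bf-prelim-ms}), I will rewrite
\begin{equation*}
e_{E,A}\star e_{F,B}\;=\;e_{E,A}\star e_{\Gk_{D_E,r^+}}\star e_{\Gk_{D_F,r^+}}\star e_{F,B}.
\end{equation*}
A direct calculation shows that $e_{E,A}\star e_{\Gk_{D_E,r^+}}$ equals $e_{E,A}$ if $\chi_A|_{\Gk_{D_E,r^+}}$ is trivial, and equals $0$ otherwise. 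When $D_E\supsetneq E$, we have $D_E\in\myVsph(E)$, and the integral characterization of cuspidality recorded after Definition~\ref{cuspidal-pontryagin-ms} forces $\chi_A|_{\Gk_{D_E,r^+}}$ to be nontrivial; so the whole convolution vanishes. Hence the non-vanishing hypothesis forces $E=D_E$, and symmetrically $F=D_F$, which means $\dim E=\dim F=\dim C(E,F)$, i.e., $E$ and $F$ are aligned.

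\textbf{Phase 2 (equality of characters).} Assuming $E$ and $F$ are aligned, they share a common affine span, so the root-subsystems $\Phi_E$ and $\Phi_F$ of \eqref{constant-prelim-ms} coincide, and for each $\alpha\in\Phi_E$ the minimal affine roots $\psi_{\alpha,E,r}$ and $\psi_{\alpha,F,r}$ also coincide; consequently $V_E$ and $V_F$ are literally the same subquotient of $\Gk$, and the ``Levi-like'' root-group product $L:=\prod_{\alpha\in\Phi_E\cup\{0\}}\Xk_{\psi_{\alpha,E,r}}$ is contained in both $\Gk_{E,r}$ and $\Gk_{F,r}$, hence in $H$; moreover $L$ surjects onto the common quotient $V_E=V_F$. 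Evaluating $(e_{E,A}\star e_{F,B})(g)$ at $g=u_0 v_0\in\Gk_{E,r}\Gk_{F,r}$ by parameterizing the fiber of the multiplication map over $g$ as an $H$-coset yields
\begin{equation*}
(e_{E,A}\star e_{F,B})(g)\;=\;\frac{\chi_A(u_0)\chi_B(v_0)}{\meas(\Gk_{E,r})\,\meas(\Gk_{F,r})}\int_H \chi_A(h)\overline{\chi_B(h)}\,dh.
\end{equation*}
Character orthogonality on the group $H$ then forces $\chi_A|_H=\chi_B|_H$ whenever the integral is nonzero; combined with the surjection $L\twoheadrightarrow V_E=V_F$, this forces $A=B$.

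\textbf{Main obstacle.} The delicate step is Phase 2: verifying that alignment genuinely produces the identification $V_E=V_F$ as the same subquotient of $\Gk$, and exhibiting an explicit subgroup of $H$ that surjects onto this common quotient. Once those structural identifications are in place, the character-orthogonality conclusion is essentially automatic. Phase 1 is conceptually smoother because Proposition~\ref{convolution-prelim-ms} supplies most of the work; the only real input there is that cuspidality of $\chi_A$ kills the character-integral $\int_{\Gk_{D_E,r^+}}\chi_A$ whenever $D_E\supsetneq E$.
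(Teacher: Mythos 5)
Your argument is correct and follows essentially the same route as the paper's: alignment is forced exactly as in the paper by combining Proposition \ref{convolution-prelim-ms} with the vanishing of $e_{E,A}\star e_{\Gk_{D_{E},r^{+}}}$ when $D_{E}\supsetneq E$ (cuspidality), and the equality $A=B$ again reduces to orthogonality of characters on the common quotient $V_{E}=V_{F}$. Your Phase 2 merely makes explicit, via the integral over $H=\Gk_{E,r}\cap\Gk_{F,r}$ and the spanning affine root groups, what the paper compresses into the identity $e_{E,A}\star e_{\Gk_{F,r^{+}}}=e_{E,A}\star e_{F,A}$ followed by the observation that $e_{F,A}\star e_{F,B}\neq 0$ precisely when $A=B$.
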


\begin{proof} \ \  Let  $D_{E}$ and $D_{F}$ be the unique facets (see Lemma \ref{facet-prelim-ms}) of $C(E,F)$ of dimension $\dim (C(E,F))$, and which contain $E$ and $F$ respectively.  Since $e_{E,A} \ \star \ e_{F,B} \neq 0$, it follows that 
$$
e_{E,A} \ \star \ e_{\Gk_{F,r^{+}}} \ \neq  \ 0 \ .
$$
Assume that $\dim (C(E, F)) > \dim (E )$.  We have 
$$
\aligned
e_{E,A} \ \star \ e_{\Gk_{F,r^{+}}} \ &= \ ( \, e_{E,A}  \ \star \ e_{\Gk_{E,r^{+}}} \, ) \ \star \ e_{\Gk_{F,r^{+}}} \ = \ e_{E,A}  \ \star \ ( \, e_{\Gk_{E,r^{+}}} \ \star \ e_{\Gk_{F,r^{+}}} \, )  \\
&= \ e_{E,A}  \ \star \ ( \, e_{\Gk_{D_{E},r^{+}}} \ \star \ e_{\Gk_{F,r^{+}}} \, ) \qquad {\text{\rm{(by Proposition \ref{convolution-prelim-ms})}}} \\
&= \ ( \, e_{E,A}  \ \star \ e_{\Gk_{D_{E},r^{+}}} \, ) \ \star \ e_{\Gk_{F,r^{+}}} \ \ .
\endaligned
$$
\noindent Since the pair $(E,A)$ is cuspidal and $E \subsetneq D_{E}$, we deduce the convolution $( \, e_{E,A}  \ \star \ e_{\Gk_{D_{E},r^{+}}} \, )$ is zero; so, 
$e_{E,A} \ \star \ e_{\Gk_{F,r^{+}}}$ is zero, a contradiction. Replacing the roles of $E$ and $F$, we conclude that 
$$ 
\dim (F) = \dim (C(E,F)) = \dim (E), 
$$
that is, $E$ and $F$ are aligned. 
Thus the cosets of $\Gk_{E,r^{+}}$ in $\Gk_{E,r}$ 
and  of $\Gk_{F,r^{+}}$ in $\Gk_{F_r}$ are represented by the same group elements, and it is easy to verify that 
$$ 
e_{E,A} \, \star \, e_{\Gk_F,r^{+}} \ = \  e_{E,A} \, \star \, e_{F,A}. 
$$
 Now
$$
\aligned
e_{E,A} \ \star \ e_{F,B} \ &= \  e_{E,A} 
\ \star \ \big( e_{\Gk_F,r^{+}} \ \star \ e_{F,B} \big) \\
&= \ \big( e_{E,A} \  \star \ e_{ \Gk_F,r^{+}}\big) \ \star \ e_{F,B} \\
&= \ \big( e_{E,A} \ \star \ e_{F,A} \big) \ \star \ e_{F,B} \\
&= \ e_{E,A} \ \star \ \big( e_{F,A} \ \star \ e_{F,B} \big)  \ \ . \\
\endaligned
$$
\noindent The convolution $e_{F,A} \ \star \ e_{F,B}$ is nonzero precisely when $A=B$.

\end{proof}

\bigskip

\begin{cor}   \label{key-cor-pontryagin-ms} 
Assume that  $(E_1,A_1)$ and $(F_1,B_1)$ are parabolically inflated from cuspidal pairs $(E,A)$ and $(F,B)$, respectively. If 
$e_{E_1, A_1} \ \star \  e_{F_1, B_1}\neq 0$  then $E$ and $F$ are aligned and $A=B$. In particular, this is true if
$(E_1,A_1)=(F_1,B_1)$. 

\end{cor}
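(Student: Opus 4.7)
The plan is to reduce Corollary \ref{key-cor-pontryagin-ms} to Proposition \ref{key-prop-pontryagin-ms} by sandwiching the convolution $e_{E_{1},A_{1}} \star e_{F_{1},B_{1}}$ between the two cuspidal idempotents $e_{E,A}$ and $e_{F,B}$, so that a non-zero outer convolution forces a non-zero inner cuspidal convolution.

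The first step will be to record the absorption identities handed down by Proposition \ref{uniqueness-pontryagin-ms}. Since $(E_{1},A_{1})$ and $(F_{1},B_{1})$ are, by hypothesis, parabolically inflated from the cuspidal pairs $(E,A)$ and $(F,B)$ respectively, the proposition yields immediately
\[
e_{E_{1},A_{1}} \star e_{E,A} \, = \, e_{E_{1},A_{1}}, \qquad e_{F_{1},B_{1}} \star e_{F,B} \, = \, e_{F_{1},B_{1}}.
\]
These are one-sided identities, so the second step is to upgrade them to two-sided absorption. For this I will use the involution $f \mapsto f^{\ast}$ with $f^{\ast}(g) := \overline{f(g^{-1})}$. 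Each idempotent in sight is self-adjoint: the support $\Gk_{\bullet,r}$ is closed under inversion, and the character factor satisfies $\overline{\chi(g^{-1})} = \chi(g)$ because $\chi$ is a unitary character of the finite abelian quotient $\Gk_{\bullet,r}/\Gk_{\bullet,r^{+}}$ (which is commutative for $r>0$). Since $(f \star g)^{\ast} = g^{\ast} \star f^{\ast}$, applying $\ast$ to the identities above produces the reversed absorptions
\[
e_{E,A} \star e_{E_{1},A_{1}} \, = \, e_{E_{1},A_{1}}, \qquad e_{F,B} \star e_{F_{1},B_{1}} \, = \, e_{F_{1},B_{1}}.
\]

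With both directions of absorption in hand, associativity gives the key identity
\[
e_{E_{1},A_{1}} \star e_{F_{1},B_{1}} \, = \, \bigl(e_{E_{1},A_{1}} \star e_{E,A}\bigr) \star \bigl(e_{F,B} \star e_{F_{1},B_{1}}\bigr) \, = \, e_{E_{1},A_{1}} \star \bigl(e_{E,A} \star e_{F,B}\bigr) \star e_{F_{1},B_{1}}.
\]
If the left-hand side is non-zero, then so is the middle factor $e_{E,A} \star e_{F,B}$, and Proposition \ref{key-prop-pontryagin-ms} forces $E$ and $F$ to be aligned with $A=B$. The particular case $(E_{1},A_{1}) = (F_{1},B_{1})$ is immediate, since $e_{E_{1},A_{1}}$ is a non-zero idempotent and therefore convolves with itself non-trivially; thus any two cuspidal pairs from which it is parabolically inflated must be aligned with identical characters.

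The only step that needs genuine verification is the self-adjointness of the four idempotents, which is a short character-theoretic check; once both directions of absorption are available, the corollary drops out of Proposition \ref{key-prop-pontryagin-ms} with no further work. I do not foresee any real obstacle.
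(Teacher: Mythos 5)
Your proposal is correct and follows essentially the same route as the paper: absorb $e_{E_1,A_1}$ and $e_{F_1,B_1}$ into their cuspidal sources via Proposition \ref{uniqueness-pontryagin-ms}, use associativity to expose the inner factor $e_{E,A}\star e_{F,B}$, and invoke Proposition \ref{key-prop-pontryagin-ms}. The only difference is that you explicitly justify the left-sided absorption $e_{F,B}\star e_{F_1,B_1}=e_{F_1,B_1}$ via self-adjointness under $f\mapsto f^{*}$, a point the paper's one-line proof leaves implicit; this is a legitimate (and careful) verification, not a change of method.
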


\begin{proof}  By Proposition \ref{uniqueness-pontryagin-ms}  and associativity of convolution, 
$$ 
e_{E_1, A_1} \ \star \  e_{F_1, B_1}= \big( e_{E_1,A_1} \ \star  \ e_{E, A} \big)  \ \star \ \big( e_{F,B} \ \star  \ e_{F_1, B_1} \big)  = 
 e_{E_1,A_1} \ \star \ \big( e_{E, A}\ \star \ e_{F,B}  \big)\  \star  \ e_{F_1, B_1}. 
$$ 
Thus $e_{E, A}\ \star \ e_{F,B}\neq 0$. The corollary follows from Proposition \ref{key-prop-pontryagin-ms}.

\end{proof}

\vskip 0.50in

\subsection{Associate cuspidal characters} \  \quad

\medskip

Recall two facets $F_1, F_2 \, \subset \, \ScptB$ (the Bruhat-Tits building with its original simplicial structure) are {\color{\myblue}associate}, when there exists $g \in \Gk$ so that $g.F_1$ and $F_2$ are aligned, i.e., $\dim (F_1) = \dim (F_2) = \dim (C(g.F_1,F_2))$.   In this situation, for any $k \in \bN$, there is a canonical equality of the groups $\Gk_{g.F_{1},k}/\Gk_{g.F_{1},k^{+}}$ and $\Gk_{F_{2},k}/\Gk_{F_{2},k^{+}}$, and the element $g$ provides an isomorphism of the groups $\Gk_{F_{1},k}/\Gk_{F_{1},k^{+}}$ and $\Gk_{F_{2},k}/\Gk_{F_{2},k^{+}}$.  The analogue of this to two refined facets $F_1, \, F_2 \, \subset \, \ScptB_{N}$, and $r \in {\frac{1}{N}} \bN_{\ge 0}$ holds. 

\medskip

\begin{minipage}{\textwidth}
\begin{prop}\label{associate-facets-pontryagin-ms} \ \ 

\begin{itemize}
\item[(i)]  Suppose $F_1$, $F_2$ and $F_3$ are three facets with $F_1$ and $F_2$ associate to $F_3$.  Then, there exists $r, s \, \in \, \Gk$ so that $F_1$, $r.F_2$ and $s.F_3$ lie in an apartment $\ScptA$, and each of these facets generates the same affine subspace of $\ScptA$.

\smallskip

\item[(ii)] Associativity is an equivalence relation on the facets of $\ScptB_{N}$.  
\end{itemize}
\end{prop}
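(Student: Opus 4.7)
The plan is to prove (i) first, using the transitive action of a parahoric subgroup on the apartments containing its associated facet, and then to derive (ii) as an essentially formal consequence of (i).

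For (i), the idea is as follows. By hypothesis there exist $g_1, g_2 \in \Gk$ so that $g_1.F_1$ is aligned with $F_3$ and $g_2.F_2$ is aligned with $F_3$. Fix an apartment $\ScptA_1$ containing $g_1.F_1$ and $F_3$, and an apartment $\ScptA_2$ containing $g_2.F_2$ and $F_3$. Because the parahoric $\Gk_{F_3,0}$ acts transitively on the (refined) apartments containing $F_3$, I can find $h \in \Gk_{F_3,0}$ with $h.\ScptA_2 = \ScptA_1$; since $h$ fixes $F_3$ pointwise, $h.g_2.F_2$ is aligned with $F_3$ inside $\ScptA_1$. Now apply $g_1^{-1}$: setting $\ScptA := g_1^{-1}.\ScptA_1$, $s := g_1^{-1}$, and $r := g_1^{-1} h g_2$, the facets $F_1$, $r.F_2$, and $s.F_3$ all lie in $\ScptA$, and both $F_1$ and $r.F_2$ are aligned with $s.F_3$. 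Since the alignment condition $\dim(F) = \dim(F') = \dim(C(F,F'))$ forces $F$ and $F'$ to lie in (and span) the same affine subspace of $\ScptA$, all three facets generate one common affine subspace, proving (i).

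For (ii), reflexivity is immediate (take $g=e$), and symmetry is also immediate: if $g.F_1$ is aligned with $F_2$, then applying $g^{-1}$ shows $F_1$ is aligned with $g^{-1}.F_2$, so $F_2$ is associate to $F_1$. For transitivity, suppose $F_1$ is associate to $F_2$ and $F_2$ is associate to $F_3$. Using symmetry, $F_1$ is associate to $F_2$ and $F_3$ is associate to $F_2$, so the hypothesis of (i) is met with $F_2$ in the role of the common third facet. Applying (i), there exist $r, s \in \Gk$ so that $F_1$, $r.F_3$, $s.F_2$ lie in a common apartment and span the same affine subspace. In particular $\dim(F_1) = \dim(r.F_3) = \dim(C(F_1,r.F_3))$, so $F_1$ and $r.F_3$ are aligned, giving $F_1$ associate to $F_3$.

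The main obstacle I anticipate is the apartment bookkeeping in part (i): the two witnesses $g_1$ and $g_2$ naturally produce two different apartments, and the key technical input that makes everything come together is the transitivity of $\Gk_{F_3,0}$ on apartments containing $F_3$. Once that is invoked cleanly, the rest is a routine translation by $g_1^{-1}$ and the observation that two facets of equal dimension, aligned with a common third facet inside a single apartment, automatically span the same affine subspace.
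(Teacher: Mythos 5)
Your proof is correct and uses essentially the same argument as the paper: the key step in both is the transitivity of a parahoric subgroup on the apartments containing its facet, which lets you merge the two witnessing apartments into one, after which alignment of all three facets follows because aligned facets in a common apartment span the same affine subspace. The only cosmetic difference is that you anchor this apartment-moving at $F_3$ (matching the hypothesis of (i) as literally stated), whereas the paper anchors it at the middle facet $g.F_2$ of the chain $F_1 \sim F_2 \sim F_3$; the two configurations are interchangeable by symmetry of the relation.
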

\end{minipage}

\medskip
\begin{proof} \ \ To prove  assertion (i), take $g, \, h \, \in \Gk$ so that $F_1$ and $g.F_2$ are aligned, and $F_2$ and $h.F_3$ are aligned (and thus $g.F_2$ and $(gh).F_3$ are aligned).   Let $\ScptA$ be an apartment containing $F_1$ and $g.F_2$, and let $\ScptA'$ be an apartment containing $g.F_2$ and $(gh).F_3$.  Take $k \in \Gk_{g.F_2,0}$ so that $k.\ScptA' = \ScptA$.  Then $F_1$, $g.F_2$ and $k.(gh).F_3$ are aligned in $\ScptA$.

\smallskip

Assertion (ii) obviously follows from assertion (i).

\end{proof}

\medskip

Fix $N \in \bN_{+}$ and $r \in {\frac{1}{N}}\bN_{+}$.  Define two cuspidal pairs $(E_1,A_1)$ and $(E_2,A_2)$ of depth $r$ to be {\color{\myblue}associate} if:
\smallskip

\begin{itemize}
\item[(i)] \ The facets $E_1$ and $E_2$ are associate, i.e., there exists $g \in \Gk$ so that $g.E_{1}$, $E_{2}$ are aligned -- so, $\Gk_{g.E_{1},r}/\Gk_{g.E_{1},r^{+}} = \Gk_{E_{2},r}/\Gk_{E_{2},r^{+}}$, and $Ad (g)$ provides an isomorphism between $\Gk_{g.E_{1},r}/\Gk_{g.E_{1},r}$ and $\Gk_{E_{1},r} / \Gk_{g.E_{1},r^{+}}$).

\smallskip

\item[(ii)] \ For an element $g$ as in part (i), there exists  $j \in \Gk_{E_{2},0}$ so that, as characters, $\myAd ( \, j \, ) E_2 = \myAd ( \, g \, ) (E_1)$.



\end{itemize}

\medskip

\noindent {\sc{Example.}} \quad Take $\Gk = \SL (2,\mk )$, and let $\Sk$ be the diagonal torus.  For $r \in \bN_{+}$, the groups $\Gk_{x,r}$ and $\Gk_{x,r^{+}}$ are constant in the interior of the chambers of $\ScptA (\Sk )$.  If $x$ an interior point of such a chamber, then the quotient group $\Gk_{x,r}/\Gk_{x,r^{+}}$ is supported on the diagonal elements and is isomorphic to $\bF_{q}$.   Fix a chamber $F \subset \ScptA (\Sk )$.  Then, with $A \in  \widehat{\bF_{q}}$ the cuspidal pairs $(F,A)$ and $(F,-A)$ are associate.  Note that for $p \ne 2$, and $A \ne 0$, the characters $A$ and $-A$ are not in the same $\Ad (\Gk_{x,0} )$-orbit in $\reallywidehat{\Gk_{x,r}/ \Gk_{x,r^{+}}}$.   


\bigskip

\begin{prop}\label{associattivity-cuspidal-pairs-pontryagin-ms} Associativity of cuspidal pairs is an equivalence relation. 
\end{prop}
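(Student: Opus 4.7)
The plan is to verify the three axioms of an equivalence relation by combining the already-established transitivity of the facet-associate relation (Proposition~\ref{associate-facets-pontryagin-ms}) with the canonical alignment identifications of the Moy--Prasad quotients. I repeatedly use the auxiliary fact that for $r>0$ the subgroup $\Gk_{E,0^{+}}$ acts trivially on $\Gk_{E,r}/\Gk_{E,r^{+}}$ (since $[\Gk_{E,0^{+}},\Gk_{E,r}]\subset\Gk_{E,r^{+}}$), so that the class of a conjugating element in $\Gk_{E,0}/\Gk_{E,0^{+}}$ determines its action on characters of the $r/r^{+}$-quotient; in particular, any lift through an alignment isomorphism $\Gk_{F,0}/\Gk_{F,0^{+}}\simeq\Gk_{E,0}/\Gk_{E,0^{+}}$ induces the same character action.

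Reflexivity is immediate: take $g=1$ and $j=1$. For symmetry, suppose $(E_{1},A_{1})\sim(E_{2},A_{2})$ is witnessed by $g,j$. Applying $g^{-1}$ to the aligned pair $(g.E_{1},E_{2})$ gives the aligned pair $(E_{1},g^{-1}.E_{2})$, so $g':=g^{-1}$ satisfies the facet-level condition in the reverse direction. Conjugating the identity $\myAd(j)A_{2}=\myAd(g)A_{1}$ by $\myAd(g^{-1})$ yields $\myAd(g^{-1}j^{-1}g)A_{1}=\myAd(g^{-1})A_{2}$. The element $j_{0}:=g^{-1}j^{-1}g$ lies in $\Gk_{g^{-1}.E_{2},0}$, and via the canonical identification $\Gk_{g^{-1}.E_{2},0}/\Gk_{g^{-1}.E_{2},0^{+}}\simeq\Gk_{E_{1},0}/\Gk_{E_{1},0^{+}}$ coming from the alignment of $g^{-1}.E_{2}$ with $E_{1}$, I choose a lift $j'\in\Gk_{E_{1},0}$ of the class of $j_{0}$. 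By the auxiliary fact, $\myAd(j')=\myAd(j_{0})$ on $\Gk_{E_{1},r}/\Gk_{E_{1},r^{+}}$, so $\myAd(j')A_{1}=\myAd(g')A_{2}$ as required.

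For transitivity, let $(E_{1},A_{1})\sim(E_{2},A_{2})\sim(E_{3},A_{3})$ be witnessed by $(g_{12},j_{2})$ and $(g_{23},j_{3})$. Proposition~\ref{associate-facets-pontryagin-ms}(i) produces $s,t\in\Gk$ with $E_{1}$, $s.E_{2}$, $t.E_{3}$ lying in a common apartment $\ScptA$ and spanning a single affine subspace; the three facets are then pairwise aligned, so the canonical alignment identifications place all three quotients $\Gk_{E_{1},r}/\Gk_{E_{1},r^{+}}$, $\Gk_{s.E_{2},r}/\Gk_{s.E_{2},r^{+}}$, $\Gk_{t.E_{3},r}/\Gk_{t.E_{3},r^{+}}$ onto a single abelian group $V$. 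Transporting $A_{2},A_{3}$ to $V$ via $\myAd(s),\myAd(t)$ and composing the two given character identities (with all $\myAd$-conjugations tracked through the $V$-identifications) produces a single identity $\myAd(t)A_{3}=\myAd(\eta)A_{1}$ on $V$ for an explicit element $\eta\in\Gk$ assembled from $s,t,g_{12},g_{23},j_{2},j_{3}$. Since $E_{1}$ is aligned with $t.E_{3}$ in $\ScptA$, the element $g_{13}:=t^{-1}$ satisfies $g_{13}.E_{1}=t^{-1}.E_{1}$ aligned with $E_{3}$, giving the required facet-level witness. Pulling the $V$-identity back by $\myAd(t^{-1})$ and choosing $j_{13}\in\Gk_{E_{3},0}$ as a lift through the alignment identification at $E_{3}$ of the class of $t^{-1}\eta\,t^{-1}\cdot t$, the auxiliary fact yields $\myAd(j_{13})A_{3}=\myAd(g_{13})A_{1}$, establishing $(E_{1},A_{1})\sim(E_{3},A_{3})$.

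The main obstacle is bookkeeping rather than conceptual: at every step each $\myAd$-action must be referred to the correct $r/r^{+}$-quotient and each canonical alignment identification tracked; the auxiliary triviality fact for $\Gk_{E,0^{+}}$ on the $r$-quotient removes all ambiguity in passing between representatives and classes modulo $0^{+}$-subgroups, and this is precisely what allows the constructed $j'$ and $j_{13}$ to be taken in $\Gk_{E_{1},0}$ and $\Gk_{E_{3},0}$ respectively while still implementing the correct conjugation on characters.
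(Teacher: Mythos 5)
Your argument follows the paper's own proof: reflexivity and symmetry are routine, and for transitivity you invoke Proposition~\ref{associate-facets-pontryagin-ms} to place $E_{1}$, $s.E_{2}$, $t.E_{3}$ simultaneously aligned in one apartment and then compose the two witnesses through the resulting common quotient $V$. Your auxiliary observation that $\Gk_{E,0^{+}}$ acts trivially on $\Gk_{E,r}/\Gk_{E,r^{+}}$ (so only the class of a conjugator in the reductive quotient matters) is precisely what justifies the paper's terser assertion that the conjugating elements ``can be assumed to be in the triple intersection'' of the three parahorics, so the proposal is correct and essentially the same in approach.
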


\begin{proof} \ \ It suffices to prove transitivity.  Suppose $(F_2,A_2)$ is associate to $(F_1,A_1)$ and $(F_3,A_3)$.  We can assume:
\smallskip
\begin{itemize} 
\item[$\bullet$] \ $F_1$ and $g.F_2$ are aligned, and $k \in (\Gk_{F_1,0} \, \cap \, \Gk_{g.F_2,0}) $ satisfies $A_{1} = \Ad (k) (\Ad (g) A_{2} )$.
\smallskip  
\item[$\bullet$] \ $F_2$ and $h.F_3$ are aligned, and  $\Ad (g) A_{3} = \Ad (\ell) (\Ad (gh) A_{3} )$ with $\ell \in (\Gk_{g.F_2,0} \, \cap \, \Gk_{(gh).F_3} )$.
\end{itemize}

\smallskip

\noindent As in the proof of Proposition \eqref{associate-facets-pontryagin-ms} , take an apartment containing $g.F_2$ and $g.(h.F_3)$, and a $t$ translation of the apartment so that the facets $F_1$, $gF_2$ and $t(gh).F_3$ are simultaneously aligned.  Then, the above elements $k$ and $\ell$ can be assume to be in the intersection $(\Gk_{F_1,0} \, \cap \, \Gk_{g.F_2,0} \, \cap \, \Gk_{t.((gh).F_3)} )$, and we then deduce $(F_1,A_1)$ and $(F_3,A_3)$ are associate.  
 
\end{proof}

Set ${\mathcal C} = {\mathcal C} (E,A)$ to be the equivalence class of cuspidal pairs associate to a given cuspidal pair $(E,A)$.  The set ${\mathcal C}$ is trivially $\Gk$-equivariant, i.e., for any $(F,B) \in {\mathcal C}$, and $g \in \Gk$, then $(g F , \Ad (g) B)$ belongs to ${\mathcal C}$ too, and obviously ${\mathcal C}$ has an attached equivalence class of associate facets ${\mathcal F} ( {\mathcal C})$, namely the associativity class of $E$.  We call ${\mathcal F} ( {\mathcal C})$ the {\it{support}} of ${\mathcal C}$.  

\medskip

For $E' \in {\mathcal F} ( {\mathcal C})$, we define the idempotent:

\begin{equation}\label{cuspidal-idempotent-pontryagin-ms} 
e^{{\,}{\mathcal C}}_{E'} \ := \ {\underset {(E',B) \, \in \, {\mathcal C}} \sum} e_{E',B} \ .
\end{equation}      
\noindent This is a system of $\Gk$-equivariant idempotents supported on the facets ${\mathcal F} ( {\mathcal C})$.

\vskip 0.40in

Define the {\it{facet closure}} ${\overline{{\mathcal F}({\mathcal C})}}$ of ${\mathcal F}({\mathcal C})$ to be

\begin{equation}\label{closure-pontryagin-ms}
{\overline{{\mathcal F}({\mathcal C})}} \ := \ \{ \ {\text{\rm{$D$ a (refined) facet}}} \ | \ {\text{\rm{$D$ is a subfacet of a facet in ${\mathcal F}({\mathcal C})$}}} \ \} \ .
\end{equation}

\noindent   If $D \in  {\overline{{\mathcal F}({\mathcal C})}}$, set
\begin{equation}\label{local-star-pontryagin-ms}
\mystar_{\mathcal C} (D) \ := \  \{ \  {\text{\rm{facets $F \subset {{\mathcal F}({\mathcal C})}$}}}  \ | \ D \subset F \ \} \ \ , 
\end{equation}     
\noindent  and 

\begin{equation}\label{idempotent-pontryagin-ms}
\aligned
\chi (D) \ :&= \ \begin{cases}
\begin{tabular}{p{3.7in}}
set of characters of $\Gk_{D,r}/\Gk_{D,r^{+}}$ parabolically inflated from a cuspidal pair $(F,B) \in {\mathcal C}$, with $F \in \mystar_{\mathcal C} (D)$ \\
\end{tabular}
\end{cases} \\
e^{{\,}{\mathcal C}}_{D} \ :&= \ {\text{\rm{$ \,  \ {\underset {\chi \in \chi (D)} {\sum}} \ e_{D,\chi}  \, $ \ \ \ (an idempotent).}}} 
\begin{matrix} \ \\ \ \\ \ \end{matrix}
\endaligned
\end{equation}

\smallskip

\noindent Note that if the facet $D$ is a facet of ${\mathcal F} ({\mathcal C} )$, then the above definition agrees with $e^{{\,}{\mathcal C}}_{D}$ as defined in \eqref{cuspidal-idempotent-pontryagin-ms}.   

\smallskip 

For any $D \in {\overline{{\mathcal F}({\mathcal C})}}$ and  $F \in \mystar_{\mathcal C}(D)$, then trivially,
$$
e^{{\,}{\mathcal C}}_{F} \, \star \, e^{{\,}{\mathcal C}}_{D} \ = \ e^{{\,}{\mathcal C}}_{F}  \ = \ e^{{\,}{\mathcal C}}_{D} \, \star \, e^{{\,}{\mathcal C}}_{F} \ .
$$

\noindent More generally, we have:

\begin{prop} Suppose $D \, , \, E \, \in \, {\overline{{\mathcal F}({\mathcal C})}}$, with $D \subset E$. \ Then 
$$
e^{{\,}{\mathcal C}}_{E} \, \star \, e^{{\,}{\mathcal C}}_{D} \ = \ e^{{\,}{\mathcal C}}_{E}  \ = \ \, e^{{\,}{\mathcal C}}_{D} \, \star \, e^{{\,}{\mathcal C}}_{E} \ .
$$
\end{prop}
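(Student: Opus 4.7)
Our plan is to reduce both equalities to a single ``packaging identity'':
\begin{equation}\label{eq-packaging}
e^{\,\mathcal{C}}_E \;=\; \sum_{\substack{\chi \in \chi(D) \\ \chi|_{\Gk_{E,r^+}} = 1}} e_{D,\chi}.
\end{equation}
Once \eqref{eq-packaging} is proved, both $e^{\,\mathcal{C}}_E \star e^{\,\mathcal{C}}_D = e^{\,\mathcal{C}}_E$ and $e^{\,\mathcal{C}}_D \star e^{\,\mathcal{C}}_E = e^{\,\mathcal{C}}_E$ follow at once from the orthogonality relation $e_{D,\chi}\star e_{D,\chi'}=\delta_{\chi,\chi'}\, e_{D,\chi}$, since \eqref{eq-packaging} displays $e^{\,\mathcal{C}}_E$ as a partial subsum of $e^{\,\mathcal{C}}_D$.

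The proof of \eqref{eq-packaging} has two steps. First, for each fixed $\xi\in\chi(E)$ we establish an ``extension identity''
$$e_{E,\xi}\;=\;\sum_{\widetilde\chi} e_{D,\widetilde\chi},$$
the sum taken over all characters $\widetilde\chi$ of $\Gk_{D,r}/\Gk_{D,r^+}$ that are trivial on $\Gk_{E,r^+}$ and restrict to $\xi$ on $\Gk_{E,r}$. For this we choose a facet $\bar E$ opposite to $E$ with respect to $D$; Proposition~\ref{iwahori-a-prelim-ms} supplies the Iwahori decomposition $\Gk_{D,r}/\Gk_{D,r^+} = V_{D,E}\oplus V_E\oplus V_{D,\bar E}$, in which $\Gk_{E,r}/\Gk_{D,r^+}$ is identified with $V_{D,E}\oplus V_E$. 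The extensions $\widetilde\chi$ are precisely those characters trivial on $V_{D,E}$, equal to $\xi$ on $V_E$, and otherwise arbitrary on $V_{D,\bar E}$; character orthogonality on $V_{D,\bar E}$ kills the sum off $V_{D,E}\oplus V_E$, and the measure identity $\mathrm{meas}(\Gk_{D,r})=|V_{D,\bar E}|\cdot\mathrm{meas}(\Gk_{E,r})$ yields the correct constant.

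Second, we match index sets: the $\widetilde\chi$ produced above, as $\xi$ ranges over $\chi(E)$, must be exactly the $\chi\in\chi(D)$ with $\chi|_{\Gk_{E,r^+}}=1$. One direction is straightforward. If $\xi\in\chi(E)$ is parabolically inflated from a cuspidal pair $(F,B)\in\mathcal C$ with $F\supset E$, then $F\supset E\supset D$ places $F$ in $\mystar_{\mathcal C}(D)$, and the chain $\Gk_{F,r^+}\subset\Gk_{F,r}\subset\Gk_{E,r}$ together with $\widetilde\chi|_{\Gk_{E,r}}=\xi$ transports the two parabolic inflation conditions from $\xi$ to $\widetilde\chi$; hence $\widetilde\chi\in\chi(D)$.

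The main obstacle is the reverse direction. Let $\chi\in\chi(D)$ be parabolically inflated from a cuspidal pair $(F,B)\in\mathcal C$ with $F\supset D$, and assume $\chi|_{\Gk_{E,r^+}}=1$; set $\xi:=\chi|_{\Gk_{E,r}}$, a well-defined character of $\Gk_{E,r}/\Gk_{E,r^+}$. We must show $\xi\in\chi(E)$, the difficulty being that the given $F$ need not contain $E$. By Proposition~\ref{uniqueness-pontryagin-ms} applied to $\xi$ there is a cuspidal pair $(F',B')$ with $F'\supset E$ which parabolically inflates $\xi$. The inclusions $\Gk_{F',r^+},\Gk_{F',r}\subset\Gk_{E,r}$ (from $E\subset F'$) let us lift the inflation of $\xi$ from $(F',B')$ to a parabolic inflation of $\chi$ from $(F',B')$ at $D$'s level. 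Thus $\chi$ is parabolically inflated from the two cuspidal pairs $(F,B)$ and $(F',B')$ simultaneously. Applying Proposition~\ref{uniqueness-pontryagin-ms} twice and using associativity of convolution,
$$ e_{D,\chi}\star e_{F,B}\star e_{F',B'} \;=\; e_{D,\chi}\star e_{F',B'} \;=\; e_{D,\chi} \;\neq\; 0,$$
so $e_{F,B}\star e_{F',B'}\neq 0$; Corollary~\ref{key-cor-pontryagin-ms} then forces $(F,B)$ and $(F',B')$ to be aligned with matching characters, so they are associate and $(F',B')\in\mathcal C$. This gives $\xi\in\chi(E)$ and completes the proof of \eqref{eq-packaging}.
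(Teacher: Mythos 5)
Your proof is correct and is essentially the fully written-out version of the paper's own (one-line) argument, which simply invokes the containment $\mystar_{\mathcal C}(D) \supset \mystar_{\mathcal C}(E)$; your ``packaging identity'' exhibiting $e^{\,\mathcal C}_E$ as a partial subsum of $e^{\,\mathcal C}_D$ over characters of $\Gk_{D,r}/\Gk_{D,r^{+}}$ is precisely what that containment buys. (Only the forward inclusion of your index-matching is actually needed for the convolution identity, so your reverse direction via Corollary \ref{key-cor-pontryagin-ms} proves a little more than required, but it is correct.)
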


\begin{proof}  The assertion follows from the property that 
$D \subset E$ means 
$$
\mystar_{\mathcal C}(D) \ \supset \ \mystar_{\mathcal C}(E) \ . 
$$
\end{proof}

\noindent As a complement, we have:

\medskip

\begin{prop} Suppose $E \, , \, F \, \in \, {\overline{{\mathcal F}({\mathcal C})}}$, with $E \cap F \neq \emptyset$. \ Then 
$$
e^{{\,}{\mathcal C}}_{E} \, \star \, e^{{\,}{\mathcal C}}_{F} \ = \ \, e^{{\,}{\mathcal C}}_{F} \, \star \, e^{{\,}{\mathcal C}}_{E} \ .
$$
\end{prop}

\begin{proof} \quad The assertion follows from the fact that the idempotents {\,}$e^{\mathcal C}_{E}${\,} and {\,}$e^{\mathcal C}_{F}${\,} factor to functions on the abelian quotient group {\,}$\Gk_{(E \cap F),r}/\Gk_{(E \cap F),r^{+}}$.   
\end{proof}

\vskip 0.70in 
 

\section{Some Euler--Poincar{\'{e}} sums}\label{ep-ms}

\medskip

We fix $N \in \bN_{+}$, and $r \in {\frac{1}{N}}{\bN_{+}}$.  Suppose $C$ is a chamber in $\ScptB_{N}$, and $K$ is a facet of $C$.  Define
\begin{equation}\label{a-ep-ms}
\myVsph_{C} (K) \ := {\text{\rm{set of subfacets of $C$ which contain $K$}}} . 
\end{equation}


Let
\begin{equation}\label{min-para-ep-ms}
\aligned
\Delta_{C}(K) \, :&= \{ \, F \in \myVsph_{C}(K) \ | \ \dim (F) = (\dim (C) - 1) \ \} \ \subset \myVsph_{C}(K) 
\endaligned
\end{equation}
\noindent of facets of codimension one.  
we have a bijection between subsets of $\Delta_{C}(K)$ and $\myVsph_{C}(K)$ given by

$$
Q = \{ \, F_{j_1} , \, \dots \, , \, F_{j_s} \, \} \, \subset \, \Delta_{C}(K) \quad \longleftrightarrow \quad {\underset {{F_{j_{i}}} \in Q } \bigcap F_{j_{i}} } \ \in \ \myVsph_{C}(K) \ \ ,   
$$
\noindent with the convention that when $Q = \emptyset$, the empty intersection is the chamber $C$.

\bigskip



\begin{prop}\label{g-ep-ms} Suppose $N \in \bN_{+}$, $r \in {\frac{1}{N}}{\bN_{+}}$.  Suppose $C$ is a chamber of $\ScptB_{N}$, and $J \not\subset K$ are facets of $C$.  Then
\smallskip
\begin{itemize}
\item[(i)]  
$$
\big( \ {\underset {F \in \myVsph_{C}(K)} \sum } \ (-1)^{\dim (F)} \ e_{\Gk_{F,r^{+}}} \ \big) \ \star \ e_{\Gk_{J,r^{+}}} \ = \ 0 \ .
$$ 
\smallskip
\item[(ii)]  Suppose ${\mathcal C} = \{ (E,A) \}$ is an equivalence class of cuspidal pairs of depth $r$.
$$
\big( \ {\underset {F \in \myVsph_{C}(K)} \sum } \ (-1)^{\dim (F)} \ e^{\mathcal C}_{F} \ \big) \ \star \ e_{\Gk_{J,r^{+}}} \ = \ 0 \ .
$$ 
\end{itemize}

\end{prop}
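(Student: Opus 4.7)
The plan is to parametrize $\myVsph_{C}(K)$ combinatorially via \eqref{min-para-ep-ms}: the codimension-one subfacets $\Delta_{C}(K)$ are indexed by the vertices of the simplex $C$ lying outside $K$, and each subset $Q$ of this vertex set indexes a subfacet $F_{Q}=\bigcap_{v\in Q}F_{v}\in\myVsph_{C}(K)$ of dimension $\dim C-|Q|$; under this parametrization the simplicial join $F_{Q}\vee J\subseteq C$ corresponds to $Q\setminus V_{J}$, where $V_{J}$ denotes the vertex set of $J$. For part (i), apply Proposition \ref{convolution-prelim-ms} to the pair $F,J\subseteq C$. Since the convex simplicial closure $C(F,J)$ is the single face $F\vee J$ of the simplex $C$, the maximal-dimension facets of Lemma \ref{facet-prelim-ms} both collapse to $D_{F}=D_{J}=F\vee J$, and after absorbing $e_{\Gk_{F,r^{+}}}\star e_{\Gk_{F\vee J,r^{+}}}=e_{\Gk_{F\vee J,r^{+}}}$ (valid because $\Gk_{F,r^{+}}\subseteq\Gk_{F\vee J,r^{+}}$ by Lemma \ref{bf-prelim-ms}(ii)) one obtains the reduction
\begin{equation}\label{plan-key-i}
e_{\Gk_{F,r^{+}}}\star e_{\Gk_{J,r^{+}}} \ =\ e_{\Gk_{F\vee J,r^{+}}}\star e_{\Gk_{J,r^{+}}}.
\end{equation}
The right-hand side of \eqref{plan-key-i} depends on $F$ only through $F\vee J$. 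Regrouping the alternating sum by the common image $Q':=Q\setminus V_{J}$, the fibre over each $Q'$ is a Boolean cube indexed by $T\subseteq V_{J}\cap(V_{C}\setminus V_{K})$ with $Q=Q'\sqcup T$, and the signed fibre sum equals $(-1)^{\dim C-|Q'|}\sum_{T}(-1)^{|T|}$. This vanishes because $J\not\subseteq K$ is precisely the condition that $V_{J}\cap(V_{C}\setminus V_{K})=V_{J}\setminus V_{K}$ is nonempty.

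For part (ii) I would follow an identical outline once the analogous reduction
\begin{equation}\label{plan-key-ii}
e^{\my, \mathcal C}_{F}\ \star\ e_{\Gk_{F\vee J,r^{+}}} \ =\ e^{\my, \mathcal C}_{F\vee J}
\end{equation}
is established. Combined with the trivial absorption $e^{\my, \mathcal C}_{F}\star e_{\Gk_{F,r^{+}}}=e^{\my, \mathcal C}_{F}$ (each $\chi\in\chi(F)$ is trivial on $\Gk_{F,r^{+}}$) and with \eqref{plan-key-i}, the identity \eqref{plan-key-ii} gives $e^{\my, \mathcal C}_{F}\star e_{\Gk_{J,r^{+}}}=e^{\my, \mathcal C}_{F\vee J}\star e_{\Gk_{J,r^{+}}}$, whose right-hand side again depends on $F$ only through $F\vee J$, and the same combinatorial cancellation over $Q\mapsto Q\setminus V_{J}$ then closes the proof.

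The main obstacle is establishing \eqref{plan-key-ii}. The plan is to unfold both sides as functions on $\Gk_{F,r}$: convolution with $e_{\Gk_{F\vee J,r^{+}}}$ acts as Fourier projection, killing each $e_{F,\chi}$ whose $\chi$ is non-trivial on the subgroup $V^{r}_{F,F\vee J}=\Gk_{F\vee J,r^{+}}/\Gk_{F,r^{+}}$, so the left-hand side collapses to $\frac{1}{\meas(\Gk_{F,r})}\sum_{\chi\in\chi(F),\,\chi|_{V^{r}_{F,F\vee J}}=1}\chi$. Using the Iwahori decomposition $V^{r}_{F}=V^{r}_{F,F\vee J}\oplus V^{r}_{F\vee J}\oplus V^{r}_{F,\overline{F\vee J}}$ of Proposition \ref{iwahori-a-prelim-ms}, each surviving $\chi$ takes the form $(0,\chi',\eta)$ with $\chi'\in(V^{r}_{F\vee J})^{*}$ and $\eta\in(V^{r}_{F,\overline{F\vee J}})^{*}$, and the combinatorial heart of the argument is to identify the surviving set with $\chi(F\vee J)\times(V^{r}_{F,\overline{F\vee J}})^{*}$. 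The easy direction uses transitivity of parabolic inflation together with the compatibility $V^{r}_{F,H}=V^{r}_{F,F\vee J}\oplus V^{r}_{F\vee J,H}$ valid for any facet $H\supseteq F\vee J$ (which shows that any cuspidal pair witnessing $\chi'\in\chi(F\vee J)$ also witnesses $(0,\chi',\eta)\in\chi(F)$ irrespective of $\eta$). The converse uses the uniqueness-up-to-associativity statement accompanying Proposition \ref{uniqueness-pontryagin-ms} together with the closure of the equivalence class $\mathcal C$ under associativity, transferring the cuspidal pair of $(0,\chi',\eta)$ to one supported on a facet containing $F\vee J$. Finally, the index identity $[\Gk_{F,r}:\Gk_{F\vee J,r}]=|V^{r}_{F,\overline{F\vee J}}|$ balances the Haar-measure normalizations and identifies the resulting sum with $e^{\my, \mathcal C}_{F\vee J}$.
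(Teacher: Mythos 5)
Your proof is correct and takes essentially the same route as the paper: reduce each convolution to one that depends on $F$ only through $C(F,J)$ (your $F\vee J$), then cancel the alternating sum over the Boolean fibres of $F\mapsto C(F,J)$, which are nontrivial precisely because $J\not\subset K$. The only difference is that for part (ii) you supply a detailed Fourier-analytic justification of the identity $e^{\mathcal C}_{F}\star e_{\Gk_{J,r^{+}}}=e^{\mathcal C}_{C(F,J)}$, which the paper simply asserts as a "fact"; your sketch of it (including the use of the uniqueness-up-to-associates statement for the converse inclusion) is sound.
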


\begin{proof}  \quad We observe that any two facets $E_1, \, E_2$ of $C$, the convolution $e_{\Gk_{E_1,r^{+}}} \star \, e_{\Gk_{E_2,r^{+}}}$ equals $e_{\Gk_{C(E_1,E_2),r^{+}}}$, where $C(E_1,E_2)$ is the convex closure of $E_1$ and $E_2$.  We deduce from this observation, that we can replace $J$ with the facet $C(J,K) \supsetneq K$ (equality is not possible since $J \not\subset K$).   \ So we assume $J \supsetneq K$.  This assumption means we have a map

\begin{equation}\label{j-ep-ms}
\aligned
\myVsph_{C}(K) \ \ &\longrightarrow \ \ \ \myVsph_{C}(J) \\
E \ \ \ &\longrightarrow \ \ C(E,J) \ .
\endaligned
\end{equation}

\noindent As above, let  $\Delta_{C}(K) := \, \{ \, F_1 \, , \, \dots \, , F_{k} \, \}$ be the facets of dimension $(\dim (C) - 1)$.  We assume the numbering of the facet is so that the intersection $(F_1 \cap \dots \cap F_j )$ equals $J$, i.e., $\{  \, F_1 \, , \, \dots \, F_j \, \}$ equals $\Delta_{C}(J)$.  \  Then, the map of \eqref{j-ep-ms} is 
\smallskip
$$
{\text{\rm{$Q \ \subset \ {\Delta}_{C}(K) \quad \longrightarrow \quad (Q \cap {\Delta}_{C}(J)) \ \subset \ {\Delta}_{C}(J)$ \ ,}}} 
$$
\noindent from which see immediately see that the fiber of a subset $Q' \subset 
{\Delta}_{J}'$ consists of the $2^{(k-j)}$ subsets 
$$
Q' \ \cup \ X {\text{\rm{\quad , \quad where $X$ is a subset of $\Delta_{C}(K) \ \backslash \ {\Delta}_{C}(J)$.}}}
$$
\noindent Since $k>j$ (the assumption $J \supsetneq K$), the convolution sum over each fiber of  \eqref{j-ep-ms} is obviously $0$, and assertion (i) follows.

\medskip

The proof of assertion (ii) is similar to that of assertion (i).  We use the fact that 
$$
e^{\mathcal C}_{F} \ \star \ e_{\Gk_{J,r^{+}}} \ = \ e^{\mathcal C}_{C(F,J)} \ .
$$
\end{proof}




\vskip 0.50in


\vskip 0.70in 
 

\section{A Key Proposition}\label{keyprop-ms}

\medskip 


We fix a maximal $\mk$-split torus $\Sk = \mS (\mk) \subset \Gk = \mG (\mk )$.   
Let $\Phi$ denote the set of roots of $\Sk$.  
Given a simple set of roots $\Delta \subset \Phi$, set:
$$
\aligned
\Phi^{+}_{\Delta} \ :&= \ {\text{\rm{set of positive for $\Delta$}}} \\
\ScptA_{\Delta} \ :&= \ {\text{\rm{positive Weyl chamber for $\Delta$}}}
\endaligned
$$
Let $\ScptB (\Gk)_N $ be the refined Bruhat--Tits building of $\Gk$ and $\ScptA = \ScptA (\Sk )$ the apartment of the maximal split torus $\Sk$.  
Let $r\in \frac{1}{N}\mathbb Z$ and $K$ a (refined) facet in $\ScptA$. Recall that $\Phi_K$ is the set of gradients of affine roots $\psi$ such that $\psi=r$ on $K$.  
We record the following well known lemma. 
\smallskip
\begin{lemma}\label{facet-simple-roots-keyprop-ms}   Set $\ell = \dim (\ScptA ) \ (= \myrank (\mG))$.  Let $C \subset \ScptA$ be a (refined) chamber and let ${\mathcal F}(C) = \{ F_0 , F_1 , \dots , F_{\ell} \}$ be the faces of $C$.   For any nonempty proper subset ${\mathcal F}_{K} \subsetneq {{\mathcal F}(C)}$, set: 
\begin{itemize}
\item[$\bullet$] \ \ $K \ := \ {\underset {F \subset {\mathcal F}_{K}} \bigcap F}$. 
\item[$\bullet$] \ \ For $F \in {{\mathcal F}}_{K}$, let $\alpha_{F}$ be the roots of $\Sk$ so that $\alpha_{F} \perp F$, and points outward from $C$.
\end{itemize} 

\noindent Then the set \,  $\Delta_{K} \, := \, \{ \, \alpha_{F} \, | \, F \in {\mathcal F}_{K} \, \}$ is a set of simple root for the root system $\Phi_K$. 
 
\end{lemma}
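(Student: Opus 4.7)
The plan is to identify the root system $\Phi_K$ geometrically in terms of $\Delta_K$, and then use the chamber $C$ to cut out a simple system. A root $\alpha \in \Phi$ lies in $\Phi_K$ exactly when some virtual affine root with gradient $\alpha$ takes the constant value $r$ on $K$; because $r \in \tfrac{1}{N}\bN$ and refined facets are intersections of refined affine root hyperplanes, this is equivalent to $\alpha$ being constant on $K$, i.e.\ to $\alpha$ being perpendicular to the direction $V_K$ of $K$. Since $K = \bigcap_{F \in \mathcal{F}_K} F$ and each wall $F$ has direction $\alpha_F^\perp$, we get $V_K^\perp = \mathrm{span}(\Delta_K)$, and hence $\Phi_K = \Phi \cap \mathrm{span}(\Delta_K)$.

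Next, I would check that $\Delta_K$ is a basis of $\mathrm{span}(\Phi_K)$. The refined chamber $C$ is an $\ell$-simplex (as the enumeration $\mathcal{F}(C) = \{F_0, \dots, F_\ell\}$ records), so its $\ell+1$ outward facet normals satisfy a unique linear relation $\sum_i c_i \alpha_{F_i} = 0$ with all $c_i > 0$. Hence any proper subfamily is linearly independent, and combined with the first paragraph, $\Delta_K$ spans $\mathrm{span}(\Phi_K)$. To conclude that $\Delta_K$ is a simple system, it then remains to show that every root of $\Phi_K$ is a non-negative or non-positive integer combination of $\Delta_K$.

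For this last step, fix $x_K$ in the interior of $K$. For each $\alpha \in \Phi_K$, choose a virtual affine root $\psi_\alpha$ with $\mygrad(\psi_\alpha) = \alpha$ and $\psi_\alpha \equiv r$ on $K$; its hyperplane passes through $x_K$ and belongs to the refined arrangement, so $C$ sits in a single open half-space of it. Hence $C - x_K$ lies in a single open Weyl chamber $C'$ of the linear root system $\Phi_K$. Among the walls of $C$, those passing through $K$ are exactly $\{F : F \in \mathcal{F}_K\}$; their supporting affine hyperplanes pass through $x_K$ and belong to the $\Phi_K$-sub-arrangement. None of these can cut through the interior of $C'$ without separating $C$ from itself, so each is a wall of $C'$. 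Since $|\mathcal{F}_K| = \myrank(\Phi_K)$ equals the number of walls of $C'$, they exhaust them, and the outward normals $\{\alpha_F\}_{F \in \mathcal{F}_K} = \Delta_K$ are (up to an overall sign) the simple roots attached to $C'$.

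The main technical point to verify carefully is the equivalence between ``$\psi \equiv r$ on $K$'' and ``$\mygrad(\psi) \perp V_K$,'' which relies on the compatibility between $r \in \tfrac{1}{N}\bN$ and the level sets of virtual affine roots; the rest is routine sign-bookkeeping between ``outward from $C$'' and the positive-Weyl-chamber convention, plus the standard fact that for an $\ell$-simplex the $\ell+1$ outward facet normals admit a unique (up to positive scalar) linear dependence with strictly positive coefficients.
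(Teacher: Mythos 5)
The paper's own justification is a one-liner (``the chamber $C$ maps to a Weyl chamber of $\Gk_{K,0}/\Gk_{K,0^{+}}$''), and your third paragraph is essentially that argument written out correctly. However, the equivalence you single out as ``the main technical point to verify carefully'' is in fact false: it is not true that $\alpha$ being constant on $K$ implies $\alpha \in \Phi_K$. If $\alpha$ is constant on $K$ with value $c$, you need $c \in r + \frac{1}{N}\bZ$ in order to produce a virtual affine root with gradient $\alpha$ equal to $r$ on $K$, and this can fail. Concretely, take $N=1$ and $\mG$ of type $G_2$ with simple roots $\alpha$ (short), $\beta$ (long) and highest root $\theta = 3\alpha+2\beta$; at the vertex $v$ of the fundamental alcove with $\alpha(v)=0$, $\theta(v)=1$ one has $\beta(v)=1/2$, so $\beta \notin \Phi_{v}$ even though $\beta$ is (trivially) constant on $\{v\}$. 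Here $\Phi_{v}=\{\pm\alpha,\pm\theta\}$ is a proper subsystem of $\Phi = \Phi\cap V_K^{\perp}$, so your identification $\Phi_K = \Phi\cap\mathrm{span}(\Delta_K)$ is wrong in general (this is exactly the phenomenon of non-special vertices, and it also occurs for refined facets when $N>1$).

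Fortunately the error is localized and repairable, because your argument only ever needs the two correct containments. First, $\Delta_K \subseteq \Phi_K$: each $F \in {\mathcal F}_K$ lies on a refined affine hyperplane $H_\psi$ with $\mygrad(\psi)=\pm\alpha_F$, and since $r \in \frac{1}{N}\bZ$ the functional $\pm\psi + r$ is again a virtual affine root, with value $r$ on $F \supseteq K$. Second, $\Phi_K \subseteq V_K^{\perp}$ (the true direction of your equivalence). Together with the linear independence of any proper subfamily of the $\ell+1$ outward normals of a simplex, these give $\mathrm{span}(\Phi_K)=\mathrm{span}(\Delta_K)$ and hence $\myrank(\Phi_K)=|{\mathcal F}_K|$, which is all that your third paragraph uses: $C$ lies in a single chamber $C'$ of the arrangement of hyperplanes $\{\psi_\alpha = r\}$, $\alpha\in\Phi_K$ (each such hyperplane is a refined hyperplane since $r\in\frac1N\bZ$), the hyperplanes $\myAff(F)$ for $F\in{\mathcal F}_K$ support facets of $C\subseteq C'$ and hence are walls of $C'$, and the count $|{\mathcal F}_K|=\myrank(\Phi_K)$ shows they exhaust the walls, so $\Delta_K$ (up to the global sign coming from the outward convention) is the simple system attached to $C'$. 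With the first paragraph replaced by these two containments, your proof is correct.
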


\medskip

\begin{proof} Elementary.  The chamber $C$ maps to a Weyl chamber of $\Gk_{K,0}/\Gk_{K,0^{+}}$.
\end{proof}

Fix a chamber $C_{0} \subset \ScptA$, and let $\myht_{C_{0}}$ be the Bruhat height/length function (with respect to $C_{0}$) on the chambers of $\ScptB$.  For a chamber $D \neq C_{0}$, we recall the faces ${\mathcal F}(D)$ of $D$ can be partitioned into two nonempty sets:
\begin{equation}\label{child-parent-keypro-ms}
\aligned
{\text{\rm{$c_{C_{0}}(D)$ (children of $D$)}}} \ :&= \ {\text{\rm{chambers adjacent to $D$ and of length $(\myht_{C_{0}} (D) + 1)$}}} \\ 
{\text{\rm{$p_{C_{0}}(D)$ (parents of $D$)}}} \ :&= \ {\text{\rm{chambers adjacent to $D$ and of length $(\myht_{C_{0}} (D) - 1)$}}} \ .
\endaligned
\end{equation}
\noindent For a chamber $D \subset \ScptA$ and $F$ a face of $D$, denote by $s_{F}(D)$, the chamber obtained by reflecting (in $\ScptA$) the chamber $D$ across the face $F$, and define: 
\begin{equation}\label{definition-a-keyprop-ms}
\alpha_{F} \ := \  \begin{cases}
\begin{tabular}{p{4.6in}}
to be the root which is perpendicular to $F$ and which points inwards to (resp.~outwards from) $D$ if $\myht_{C_0}(s_{F}(D)) = (\myht_{C_0}(D)-1)$ (resp.~$\myht_{C_0}(s_{F}(D)) = (\myht_{C_0}(D)+1)$).
\end{tabular}
\end{cases}
\end{equation}
\medskip
\noindent Note that the notation $\alpha_{F}$ suppressed the dependence on the base affine chamber $C_{0}$ and apartment $\ScptA$.

\smallskip

Let $\ScptA \subset \ScptB$ be as Lemma \ref{facet-simple-roots-keyprop-ms}.   To a set of simple roots $\Delta$ of $\Sk$, and a chamber $C_{0} \subset \ScptA$, there is an associated sector $S(C_{0},\Delta )$ in $\ScptA$ whose definition we recall as:

\begin{equation}\label{definition-b-keyprop-ms}
S(C_{0},\Delta ) \ := \  \begin{cases}
\begin{tabular}{p{4.6in}}
smallest union of affine chambers so that if $x \in  S(C_{0},\Delta )$, and $v$ is in the positive Weyl chamber of the simple roots $\Delta$, then $(x +v) \in S(C_{0},\Delta )$ too.
\end{tabular}
\end{cases}
\end{equation}

\medskip

\noindent The apartment $\ScptA$ is the union of the sectors  $S(C_{0},\Delta )$ as $\Delta$ runs over the possible sets of simple roots.  See Figure \ref{figure-keypro-ms} for the example of C2 (where the eight sectors are color coded).

\smallskip 

\noindent A chamber $D$ belongs to $S(C_{0},\Delta )$ if there is a sequence of chambers  $C_{0} \, , \, C_{1} \, , \dots \, C_{r} = D$ with $i = \myht_{C_{0}} (C_{i})$ so that $C_{i+1}$ and $C_{i}$ share a face $F$ so that $\alpha_{F}$ is positive for $\Delta$.    

\medskip

\begin{prop}\label{proposition-keyprop-ms} Let $\ScptA \subset \ScptB = \ScptB (\Gk)$ be as Lemma \ref{facet-simple-roots-keyprop-ms}.    For any chamber $D \neq C_{0}$ in $\ScptA$, there exists a set (possibly more than one) of simple roots $\Delta$ of $\Sk$ so that: 

\medskip

\begin{itemize}  

\item[(i)] \ For every face $F \subset D$, the root $\alpha_{F}$ belongs to $\Phi^{+}_{\Delta}$.

\item[(ii)] \ $D$ is contained in $(C_{0} + \ScptA_{\Delta}) \ \subset S(C_{0} , \Delta )$ ($\ScptA_{\Delta}$ is the positive Weyl chamber of $\Delta$).

\item[(iii)] \ For every $\alpha \in \Delta$, there exists a face $F \subset D$ so that:
\begin{itemize}
\item[$\bullet$] \ $\alpha_{F}$ is outwards,
\item[$\bullet$] \ the root $\alpha$ appears in the expression of $\alpha_{F}$ in terms of $\Delta$. 
\end{itemize}
\end{itemize}
\end{prop}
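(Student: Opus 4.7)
I plan to choose $\Delta$ via a generic displacement vector from $C_0$ to $D$. Let $y_D$ and $x_0$ denote barycenters of $D$ and $C_0$, and set $v := y_D - x_0 \ne 0$; after perturbing $x_0$ slightly inside $\myint(C_0)$ if needed, I may assume $v$ lies in the interior of a unique closed Weyl chamber $\overline{\ScptA_\Delta}$, and this $\Delta$ will be my candidate.

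The technical heart of the argument will be the Key Lemma: $\alpha_F(v) > 0$ for every face $F$ of $D$. Writing $\phi_F$ for the affine functional with zero-set $H_F$ and $\mygrad(\phi_F) = \alpha_F$, one has $\alpha_F(v) = \phi_F(y_D) - \phi_F(x_0)$, and I would split on whether $H_F$ separates $C_0$ from $D$. When $H_F$ separates (i.e., $\myht_{C_0}(s_F D) = \myht_{C_0}(D) - 1$), the sign convention for $\alpha_F$ forces $\phi_F(y_D) > 0 > \phi_F(x_0)$ and the claim is immediate. In the non-separating case both values are negative and I would establish $|\phi_F(y_D)| < |\phi_F(x_0)|$ by comparing inradii: since $H_F$ is a wall of $D$, the distance from $y_D$ to $H_F$ equals the inradius of $D$; meanwhile $H_F$ cannot also be a wall of $C_0$ (otherwise reflecting $C_0$ across $H_F$ would give $D$, a separating situation), so the inscribed ball of $C_0$ is disjoint from $H_F$ and $|\phi_F(x_0)|$ strictly exceeds the inradius of $C_0$. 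Since all chambers of the apartment are Weyl-congruent, these inradii agree and the desired strict inequality follows.

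With the Key Lemma secured, condition (i) is immediate from the characterization $\beta \in \Phi^{+}_\Delta \iff \beta(v) > 0$ for $v \in \myint(\ScptA_\Delta)$. For (ii), I would use the description of $C_0 + \ScptA_\Delta$ as $\{p \in \ScptA : \beta(p) \geq \min_{C_0}\beta \text{ for all } \beta \in \Delta\}$; a minimal gallery $C_0 = E_0, E_1, \ldots, E_k = D$ crosses walls whose roots $\gamma_i$ all lie in $\Phi^{+}_\Delta$ by (i), and tracking $\beta \in \Delta$ along the gallery yields $\min_D \beta \geq \min_{C_0}\beta$, placing all vertices of $D$ in the sector. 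For (iii), I would argue by minimality of $\Delta$: if some $\alpha \in \Delta$ did not appear in any outward $\alpha_F$, the outward constraints $\alpha_F(v) > 0$ would be invariant under decreasing $\alpha(v)$, so by the genericity built into the choice of $v$, one could slide $v$ onto the wall $\alpha = 0$ of $\overline{\ScptA_\Delta}$, contradicting $v \in \myint(\ScptA_\Delta)$.

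The main obstacle will be the Key Lemma in the non-separating case. The inradius comparison rests on the geometric fact that, in an irreducible Coxeter complex, the inscribed ball of a chamber is tangent only to its own walls---this is where the paper's hypothesis that $\mG$ is quasisimple enters essentially. Once the Key Lemma is in hand, verifications of (ii) and (iii) reduce to combinatorial bookkeeping with positive systems and minimal galleries.
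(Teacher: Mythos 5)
Your Key Lemma fails, and the failure is exactly at the point your parenthetical tries to wave away. In the non-separating case you assert that $H_F$ cannot be a wall of $C_0$, ``otherwise reflecting $C_0$ across $H_F$ would give $D$.'' That inference is false in rank $\ge 2$: an affine root hyperplane supports faces of infinitely many chambers, so it can be a wall of both $C_0$ and $D$ with both chambers on the same side of it and $s_{H_F}(C_0)\neq D$. Concretely, in the $\widetilde{A}_2$ apartment let $C_0$ and $D$ be two ``upward'' alcoves resting on the same affine root hyperplane $H$, with $D$ far to the right. The base $F$ of $D$ lies on $H$, $H$ does not separate $C_0$ from $D$, so $\alpha_F$ is the outward (downward-pointing) root; since the incenters of $C_0$ and $D$ are at the same distance from $H$, your quantity $\alpha_F(v)=\phi_F(y_D)-\phi_F(x_0)$ equals $0$, not $>0$. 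A small generic perturbation of $x_0$ makes this nonzero but of uncontrolled sign (pushing $x_0$ downward makes it negative), so the recipe ``take the $\Delta$ with $v\in\myint(\ScptA_{\Delta})$'' selects a wrong $\Delta$ for half of the admissible perturbations, and nothing in the proposal determines the right half. This is precisely the difficulty the paper's proof is engineered to handle: it takes the unique $\Pi$ with $D\subset\ScptA_{\Pi}$ (the Weyl cone from a special vertex of $C_0$), isolates the set $\Theta$ of faces of $D$ lying on walls of that cone with $C_0$ on the same side, and replaces $\Pi$ by $\Delta=w_{\Theta}(\Pi)$, which flips exactly those simple roots; your displacement-vector heuristic contains no mechanism that performs this flip.

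Two further points would need repair even if the Key Lemma were fixed. Your description of the sector as $\{p:\beta(p)\ge\min_{C_0}\beta \ \text{for all} \ \beta\in\Delta\}$ is in general strictly larger than $C_0+\overline{\ScptA_{\Delta}}$ (the minima of distinct simple roots over $C_0$ are attained at different vertices), so membership in that set does not prove (ii). And the argument for (iii) is not a proof: ``sliding $v$ onto the wall $\alpha=0$'' changes $v$ and contradicts nothing about the originally chosen $v$; (iii) is a genuine combinatorial assertion, which the paper establishes by evaluating the fundamental coweight $w_{\Theta}(\lambda)$ on the point of $D$ farthest from $x_0$.
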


\medskip

\begin{proof} \quad   Pick a special vertex $x_{0}$ of $C_{0}$ to be the origin.   There exists a unique set of simple roots $\Pi \subset \Phi$ such that $D\subset \ScptA_{\Pi}$.  Suppose $F$ is a face of $D$:

\smallskip

\begin{itemize}
\item[$\bullet$] \ If $F$ is not contained in the boundary of $\ScptA_{\Pi}$, let $x$ be an interior point of $F$. It easily follows, from the definition of $\alpha_F$, that  $\alpha_F(x) >0$ hence $\alpha_F\in \Phi_{\Pi}^+$.  

\smallskip

\item[$\bullet$] \ If $F$ is contained in the boundary of $\ScptA_{\Pi}$, then (since the boundary walls of $\ScptA_{\pi}$ are contained in simple root hyperplanes), either $\alpha_F$ or $-\alpha_F$ is in $\Pi$, and the boundary hyperplane containing $F$ is $\alpha_F=0$.  \ If  $D$ and $C_0$ are on the different sides of the hyperplane $\alpha_F=0$, then $\alpha_F\in \Pi$, in particular, it is positive. Otherwise $-\alpha_F\in \Pi$. 

\end{itemize}

\smallskip

\noindent Let 
$$
\Theta  \ := \ \begin{cases}
\begin{tabular}{p{4.5in}}
the set of faces  $F$ of $D$ contained in the boundary of $\ScptA_{\Pi}$ such that $D$ and $C_0$ are on the same side of the hyperplane $\alpha_F=0$
. \\
\end{tabular}
\end{cases}
$$

\smallskip

\noindent We identify $\Theta$ with a set 
 of simple roots $\Theta \subset \Pi$, by the map $F \mapsto -\alpha_F$. 
   Summarizing, for every face $F$ of $D$, either $\alpha_F\in \Phi_{\Pi}^+$ or $-\alpha_F \in \Theta$, and 
 every element of $-\Theta$ occurs as $\alpha_F$. 
 
 \medskip 
 \noindent  Claim: \ if $\alpha_F\in \Phi_{\Pi}^+$ then it is not a linear combination of roots in $\Theta$. 

\smallskip

\noindent{\it{Proof of claim}}: \  Since $D\neq C_0$, $\Theta$ is a proper subset of $\Pi$. Since $D$ is a simplex no proper subset of $\alpha_F$ is linearly 
  dependent. The claim now follows since $|\Theta| < |\Pi|$ and the number of faces of $D$ is $|\Pi| + 1$. 
  
\smallskip 
  
 \noindent Summarizing again, the set of all $\alpha_F$ is a union of $-\Theta$ and a subset of $\Phi_{\Pi}^+ \setminus \Phi^+_{\Theta}$, where $\Phi_{\Theta}$ denotes the root subsystem spanned by $\Theta$. Let $W_{\Theta}$ be the Weyl group of this root system, and let $w_{\Theta}\in W_{\Theta}$ be the longest 
 element.  We observe that $w_{\Theta}$ has order $2$ so we can write $w_{\Theta}$ at places where naturally one should have $w_{\Theta}^{-1}$.   

\medskip

We are now ready to prove the three statements of the Proposition.  Define 
$$
\Delta \ := \ w_{\Theta}(\Pi) \ . 
$$
\noindent Since $ w_{\Theta} (\Theta)=-\Theta$ and $w_{\Theta}$ permutes $\Phi_{\Pi}^+ \setminus \Phi^+_{\Theta}$, it follows that 
 $$
 \Phi^+_{\Delta} = (\Phi_{\Pi}^+ \setminus \Phi^+_{\Theta}) \cup \Phi^+_{-\Theta} 
 $$ 
\noindent hence all $\alpha_F\in \Phi^+_{\Delta}$, so statement (i) holds.

 \medskip 
 We now show statement (ii), $D$ is contained in $S(C_{0},\Delta)$. To this end, we consider the facet $E$ of $D$ defined by 
 \[ 
 E = {\underset {F \in \Theta} {\bigcap}} \ F \ . 
 \] 
 \noindent and observe that $w_{\Theta}$ fixes $E$. 
 Let $y$ be an interior point in $E$. Since $w_{\Theta} (y) =y$, it follows that $y\in \ScptA_{\Delta}$.  Since $C_0$ is on the same side as $D$ with respect to 
 hyperplanes spanned by faces $F\in \Theta$, it follows that $x+y$ is in the interior of $D$ for all $x$ in the interior of $C_0$, sufficiently close to $x_0$, so (ii) holds. 
 
\smallskip 
It remains to show statement (iii), for every $\alpha\in \Delta$ there exists an outward $\alpha_F$ such that $\alpha$ appears in the expression of $\alpha_F$ in 
terms of $\Delta$. If $\alpha \in -\Theta$, then $\alpha=\alpha_F$, for some face $F$, and $\alpha_F$ is outward by the definition of $\Theta$.  
Now assume that $\alpha \notin -\Theta$. 
Let $v\in D$ be the (unique) point maximizing the distance from $x_0$. Let $E$ be the facet of $D$ containing $v$ as the interior point. We identify $v$ with 
(half of) the gradient of the distance function at $v$. Then $v$ is perpendicular 
to $E$, and $v$ can be uniquely written down as a linear combination of vectors perpendicular to faces $F$ containing $E$ and pointing out of $D$.  Thus, 
\[ 
v= \sum_{F\supseteq E} x_F \alpha_F 
\] 
where $x_F>0$ and $x_F <0$ if $\alpha_F$ is outward and inward, respectively. 
Let $\lambda$ be the fundamental co-weight corresponding to $\alpha$, that is,  $\lambda(\alpha)=1$  and $\lambda(\beta) =0$ for all other $\beta\in \Delta$. 
If $\lambda(v) >0$ then 
\[ 
\lambda(v)= \sum_{F\supseteq E} x_F\lambda(\alpha_F) >0 
\] 
so there must be an outward $\alpha_F$ containing $\alpha$ in its linear expansion in terms of $\Delta$.  It remains to prove that $\lambda(v) >0$. 
Note that $v\in \ScptA_{\Pi}$, since $D\subset \ScptA_{\Pi}$. Observe that $\ScptA_{\Pi} \setminus 0$ is 
contained in the interior of the cone dual to $\ScptA_{\Pi}$.  Since $\ScptA_{\Pi}$ is spanned by fundamental co-weights,  one of which is $w_{\Theta}(\lambda)$, 
 it follows that $w_{\Theta}(\lambda)(v) >0$. Hence 
 \[ 
0< w_{\Theta}(\lambda)(v)=  \lambda(w_{\Theta}(v))=\lambda(v) 
\] 
 where the second equality follows from the fact that $v-w_{\Theta}(v)$ is a linear combination of roots in $\Theta$, so perpendicular to $\lambda$, 
 since $\alpha \notin -\Theta$. 

\end{proof}

\medskip

\begin{figure}[ht]
\includegraphics[height=15cm]{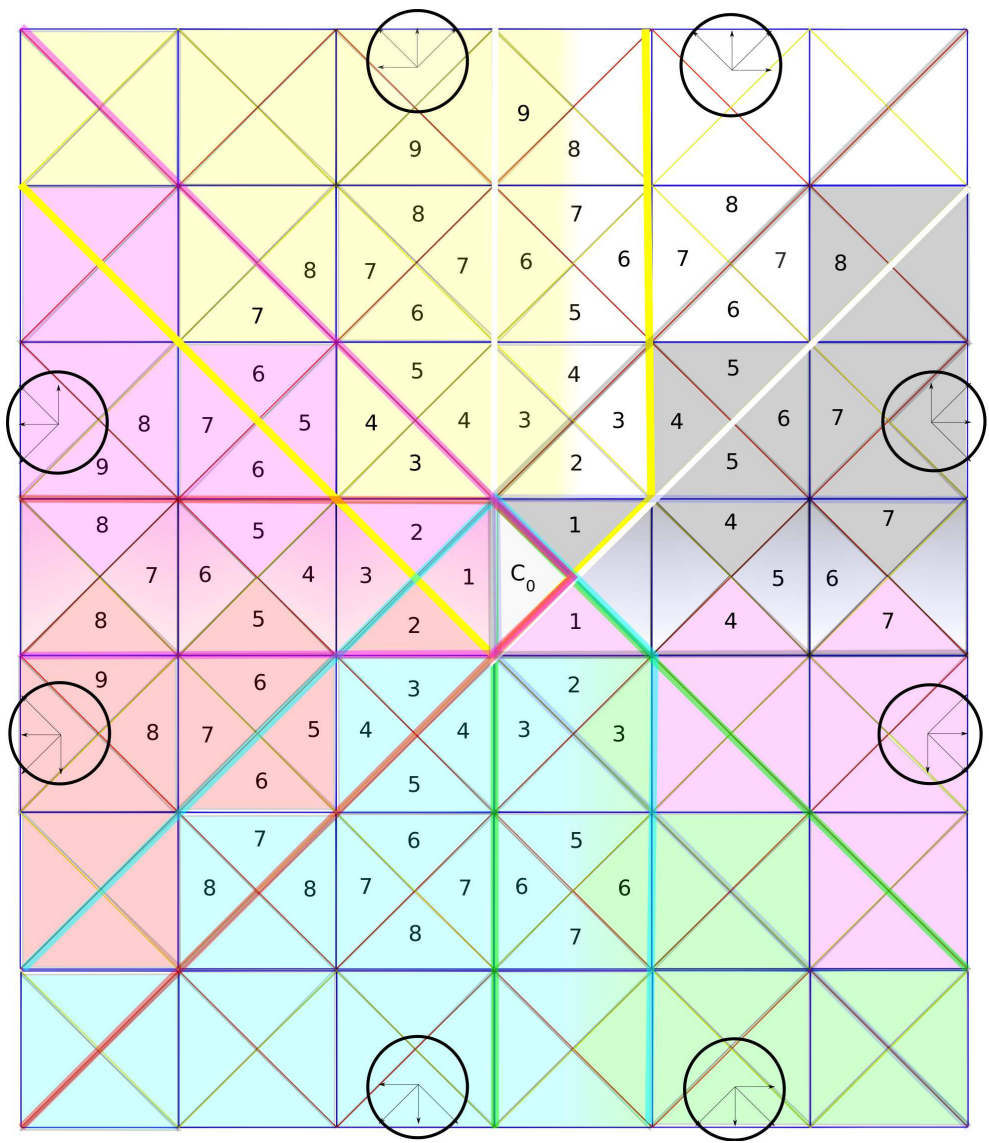}
\caption{Illustration of Proposition \ref{proposition-keyprop-ms} }
\label{figure-keypro-ms}
\end{figure}

\noindent {\sc{Example}}. \quad Figure \ref{figure-keypro-ms} illustrates Proposition \ref{proposition-keyprop-ms} for C2.  For the shown chamber $C_{0}$ the eight sectors $S(C, \Delta )$ are color coded with an indication of the positive roots in the circles.  Chambers of a single color have a unique set of simple roots $\Delta$ which satisfy the conditions of Proposition \ref{proposition-keyprop-ms}.  Chambers with a blend of two colors have two possible adjacent set of simple roots satisfying Proposition \ref{proposition-keyprop-ms}.
 
\vskip 0.70in 
 

\section{Essential Compactness of Euler-Poincar{\'{e}} Sums}\label{convolve-ms}

\medskip

\subsection{\label{a-convolve-ms}{Assumptions}} \

\medskip 

\noindent In the following, we assume: 

\begin{itemize}
\item[$\bullet$] \ $\mk$ is a non-archimedean local field.
\smallskip
\item[$\bullet$] \ $\Gk = \mG (\mk )$ is the group of $\mk$-rational points of an absolutely quasisimple split linear algebraic group defined over $\mk$.
\smallskip
\item[$\bullet$] \ $\ScptB = \ScptB (\Gk )$ is the Bruhat--Tits building of $\Gk$.  The assumption $\mG$ is quasisimple means the chambers of $\ScptB$ are simplicies. 
\smallskip
\item[$\bullet$] \ For $N \in \bN$, let $\ScptB_{N}$ denote the refined building, and suppose $r \in {\frac{1}{N}}\bN_{+}$.
\end{itemize}

\bigskip

\subsection{\label{b-convolve-ms}{Bruhat length/height on a refined building}} \ 

\medskip

The fixture of a chamber $C_0 \subset \ScptB_{N}$, leads to an elementary and obvious generalization of the Bruhat length/height function on the chambers of $\ScptB$ to a (generalized) Bruhat length/height function $\myht_{C_{0}}$ on the chambers of $\ScptB_{N}$.  As in  [{\reBCM}:{\S}2], if $\ScptA = \ScptA (\Sk )$ is an apartment containing $C_{0}$, and $\pm \alpha \in \Phi (\Sk )$ is a pair of opposite roots, we define  
\begin{equation}
\myht^{\pm \alpha}_{C_{0}} (D) \ := \ \begin{cases} 
\begin{tabular}{p{3.3in}}
the number of refined affine hyperplanes $H_{\psi}$ perpendicular to $\pm \alpha$ and separating $C_{0}$ and $D$.
\end{tabular} 
\end{cases}
\end{equation}

\noindent Then, $\myht_{C_{0}}(D)$ is the sum of the $\myht^{\pm \alpha}_{C_{0}} (D)$'s over all pairs of opposite roots of $\Phi (\Sk )$.  For $m \in \bN$, define, as in [{\reBCM}:{\S}2], the ball of radius $m$:
\begin{equation}\label{dist-ball-convolve-ms}
\myBall (C_{0},m) \ := \ \{ \ {\text{\rm{$D$ chamber of $\ScptB_{N}$}}} \ | \ \myht_{C_{0}}(D) \, \le m \, \} \ .
\end{equation}

\noindent Suppose $D$ is a chamber with $\myht_{C_0}(D) = m$.  We recall (see [{\reBCM}:{\S}2]):
\begin{itemize}
\item[$\bullet$] \ A face {\,}$F${\,} of {\,}$D${\,} is called {\it{inward}} with respect to the base chamber $C_{0}$ if {\,}$F${\,} is a subfacet of {\,}$\myBall (C_{0},(m-1))$.   A face {\,}$F${\,} of {\,}$D${\,} which is not inward is called {\it{outward}}.

\smallskip

\item[$\bullet$] \ Set 
\begin{equation}
\aligned
\mychild_{C_{0}}(D) \ :&= \ {\text{\rm{set of outward (child) faces of $D$}}} \ , \\
\myparent_{C_{0}}(D) \ :&= \ {\text{\rm{set of inward (parent) faces of $D$}}}.
\endaligned
\end{equation}

\vskip 0.05in

\noindent and define the facet {\,}${\mathcal E}_{C_{0}}(D)$ as:

\begin{equation}\label{key-facet-convolve-ms}
{\mathcal E}_{C_{0}}(D) \ := \ {\underset {F \in \mychild_{C_{0}}(D)} \bigcap} F \ . 
\end{equation}

\noindent Then (see [{\reBCM}:{\S}2.2]), the subsets of $\mychild_{C_{0}}(D)$ parametrize the facets of $D$ not in {\,}$\myBall (C_{0},(m-1))$ as follows:
$$
\myTau \, \subset \, \mychild_{C_{0}}(D) \quad \xleftrightarrow{\qquad} \quad {\myFacet (T)} \, := \, {\underset {F \in \myTau} \bigcap } \ F \ . 
$$
\noindent The empty intersection is the chamber $D$ itself.  These facets of $D$ are also precisely the facets of $D$ containing ${\mathcal E}_{C_{0}}(D)$. 
\end{itemize}
 
\bigskip

\subsection{Convolution}

\begin{lemma}\label{lemma-a-convolve-ms} \ Under the assumptions of subsection {\eqref{a-convolve-ms}}, fix a (base) chamber $C_{0} \subset \ScptB_{N}$, and an open compact subgroup $J \subset \Gk$.  Suppose ${\mathcal C} = \{ (E,A) \}$ is an associate class of cuspidal pairs, and $e^{{\,}\mathcal C}_{F}$ is the corresponding $\Gk$-equivariant system of idempotents defined in \eqref{idempotent-pontryagin-ms}.  \ \ Then, there exists $L_{0} \in \bN_{+}$ (dependent on $C_0$ and $J$) so that  for any chamber {\,}$D$, with  $\myht_{C_0}(D) \ge L_{0}$, the Euler-Poincar{\'{e}} convolution
$$
e_{J} \ \star \ \big( \, {\underset {{\mathcal E}_{C_{0}}(D) \subset K \subset D} \sum} \ (-1)^{\dim ( K )} \, e^{{\,}\mathcal C}_{K } \ \big) \quad {\text{\rm{vanishes.}}}
$$
\end{lemma}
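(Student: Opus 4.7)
The strategy is to reduce the claim to Proposition \ref{g-ep-ms}(ii). Set $E := {\mathcal E}_{C_0}(D)$. For chambers $D$ with $\myht_{C_0}(D) \ge L_0$, the plan is to produce an inward face $Y \in \myparent_{C_0}(D)$ (which automatically satisfies $Y \not\supset E$, since $E$ is the intersection of the outward faces of $D$) such that after left-convolution with $e_J$, the Euler--Poincar\'e sum over $\myVsph_D(E)$ becomes right-invariant under $e_{\Gk_{Y, r^+}}$. Once this invariance is established, Proposition \ref{g-ep-ms}(ii), applied in the chamber $D$ with base facet $E$ and the inward face $Y$, forces the sum to vanish.

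For $D \ne C_0$, apply Proposition \ref{proposition-keyprop-ms} to obtain a set of simple roots $\Delta$ with $D \subset S(C_0, \Delta)$, all outward $\alpha_F \in \Phi^+_\Delta$, and every $\alpha \in \Delta$ appearing in some outward $\alpha_F$. Compactness of $J$ supplies a bound $m_0 = m_0(J)$ on the depth of its root-group components, and hence an Iwahori-type factorization $J = J^- J^0 J^+$ relative to $\Delta$. Choose $L_0$, depending on $J$ and $r$, large enough that for $\myht_{C_0}(D) \ge L_0$, every affine root $\psi$ with $\mygrad(\psi) \in \Phi^+_\Delta$ contributing to $J^+$ satisfies $\psi > r$ on $E$; this is possible because positive affine root values on $E$ grow linearly with $\myht_{C_0}(D)$ since $E$ lies deep in the sector $S(C_0, \Delta)$. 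Consequently $J^+ \subset \Gk_{E, r^+}$, and by Lemma \ref{bf-prelim-ms}(ii), $J^+ \subset \Gk_{K, r^+}$ for every $K \in \myVsph_D(E)$. Since the characters in $\chi(K)$ are inflated from $\Gk_{K, r}/\Gk_{K, r^+}$, the component $e_{J^+}$ is absorbed, giving $e_J \star e^{{\,}\mathcal C}_K = e_{J^- J^0} \star e^{{\,}\mathcal C}_K$ for each $K$.

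The remaining step produces the inward face $Y$ and handles the convolution $e_{J^- J^0} \star e^{{\,}\mathcal C}_K$. Using $\Gk$-equivariance of the system $\{e^{{\,}\mathcal C}_K\}$, the convolution identity of Proposition \ref{convolution-prelim-ms}, and property (iii) of the Key Proposition (which constrains the structure of $\myparent_{C_0}(D)$ and guarantees it is nonempty for $D \ne C_0$), we select $Y \in \myparent_{C_0}(D)$ and rewrite $e_{J^- J^0} \star e^{{\,}\mathcal C}_K$ so as to expose $e_{\Gk_{Y, r^+}}$ as a right factor. The cuspidal characterization of Proposition \ref{property-pontryagin-ms}---specifically, the restriction properties of cuspidal characters with respect to Iwahori decompositions along opposite facets---supplies the compatibility needed for this rewriting.

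The main obstacle is the uniformity of $Y$ across all $K \in \myVsph_D(E)$, which is what permits a single application of Proposition \ref{g-ep-ms}(ii) to the full alternating sum. Establishing this uniform right-absorption is the main technical content of the argument and rests on the precise interplay between the Iwahori structure of Proposition \ref{iwahori-a-prelim-ms}, the convolution identity of Proposition \ref{convolution-prelim-ms}, and the geometric constraints supplied by the Key Proposition.
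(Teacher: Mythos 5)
Your reduction to Proposition \ref{g-ep-ms}(ii) is the right target, and your observation that $J^{+}$ gets absorbed into $\Gk_{E,r^{+}}$ for $D$ deep in the sector is a correct (and relevant) piece of the picture. But the proof has a genuine gap at its center: the claim that, after left convolution with $e_{J}$, the alternating sum becomes right-invariant under $e_{\Gk_{Y,r^{+}}}$ for an \emph{inward} face $Y$ is never established -- you assert that it follows from an unspecified ``rewriting'' of $e_{J^{-}J^{0}} \star e^{\,\mathcal C}_{K}$ and you yourself flag the uniform right-absorption as ``the main technical content.'' That is precisely the step that carries the whole lemma, and I do not see a mechanism for it: $\Gk_{Y,r^{+}}$ for an inward face $Y$ has no containment relation with $J^{-}J^{0}$ or with the groups $\Gk_{K,r}$ supporting the idempotents $e^{\,\mathcal C}_{K}$, and nothing in Propositions \ref{iwahori-a-prelim-ms}, \ref{convolution-prelim-ms} or \ref{property-pontryagin-ms} produces such a factor. (Two smaller problems: an arbitrary open compact $J$ need not admit an Iwahori factorization $J = J^{-}J^{0}J^{+}$ relative to $\Delta$ -- the paper avoids this by replacing $J$ with $\Gk_{C_{0},s} \subset J$, using $e_{J} \star e_{\Gk_{C_{0},s}} = e_{J}$; and the hypothesis of Proposition \ref{g-ep-ms} is $Y \not\subset E$, not $Y \not\supset E$ as you wrote, though that condition does hold here.)

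The paper's argument goes through an \emph{outward} facet, not an inward face, and the absorption happens on the left. Writing $E = {\mathcal E}_{C_{0}}(D) = F_{0} \cap \cdots \cap F_{l}$ as the intersection of the outward faces, part (iii) of Proposition \ref{proposition-keyprop-ms} supplies an outward face $F_{0}$ whose root $\alpha_{F_{0}}$ involves, with coefficient $\ge 1$, a simple root $\beta \in \Delta$ chosen so that $\beta(x-y) > a = (s-r)+h\epsilon$ for $x \in D$, $y \in C_{0}$. Setting $F = F_{1} \cap \cdots \cap F_{l} \supsetneq E$, every affine root $\psi$ with $\psi = r$ on $E$ and $\psi > r$ on $F$ has gradient $\sum_{i} x_{i}\alpha_{F_{i}}$ with $x_{0} < 0$ and all $x_{i} \le 0$, hence is a negative root involving $\beta$; the height bound then forces $\psi(y) > s$ on $C_{0}$, so $\Xk_{\psi} \subset \Gk_{C_{0},s}$. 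This yields $e_{\Gk_{C_{0},s}} \star e_{\Gk_{E,r^{+}}} = e_{\Gk_{C_{0},s}} \star e_{\Gk_{F,r^{+}}}$, and since $e_{\Gk_{E,r^{+}}}$ is a left identity for every $e^{\,\mathcal C}_{K}$ in the sum, Proposition \ref{g-ep-ms}(ii) applied with the facet $F \not\subset E$ kills the sum. The selection of $F_{0}$ via part (iii) of the Key Proposition -- which your outline cites only as a constraint on $\myparent_{C_{0}}(D)$ -- is the essential input, and it is used to discard one outward face, not to produce an inward one. To repair your proof you would need to abandon the inward-face/right-invariance strategy and carry out this left-absorption along the distinguished outward face.
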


\begin{proof} \quad Let $r \in {\frac{1}{N}}\bN_{+}$ be the depth of the associate class ${\mathcal C}$.  As preliminary reduction, we choose $s \in {\frac{1}{N}}\bN_{+}$ so that both $s \ge r$ and $\Gk_{C_{0},s} \subset J$.  If suffices to show the assertion with $J$ replaced by $\Gk_{C_{0},s}$.  \


\medskip

We remark that because the Iwahori subgroup $\Gk_{C_{0},o}$ acts transitively on the apartments containing $C_{0}$ it is sufficient to fix an $\ScptA  = \ScptA (\Sk)$ containing $C_{0}$, and establish the assertion of the Lemma for chambers in $\ScptA$ sufficiently far from $C_{0}$.  So, we fix $\ScptA = \ScptA (\Sk ) \supset C_{0}$.

\medskip

Suppose $D \ne C_{0}$ is a chamber of $\ScptA$.  \ By Proposition \ref{proposition-keyprop-ms}, there is a simple set of roots $\Delta \subset \Phi (\Sk )$ so that 
$$
\aligned
D \ \subset \ C_{0} \ + \ \ScptA_{\Delta} \quad {\text{\rm{($\ScptA_{\Delta}$ the positive Weyl chamber $\ScptA_{\Delta}$ of $\Delta$),}}}
\endaligned
$$

\noindent and 
\begin{itemize}
\item[(i)] For each face $F \subset D$, the root $\alpha_{F}$ (notation of  Proposition \ref{proposition-keyprop-ms}) belongs to the positive roots $\Phi^{+}_{\Delta}(\Sk )$.
\smallskip
\item[(ii)] For any simple root $\alpha \in \Delta$, there exists an outwards face $F \subset D$ (i.e., $\alpha_{F}$ is outwards from $D$), so that the root $\alpha$ appears in the expression of $\alpha_{F}$ in terms of $\Delta$.
\end{itemize}

\medskip

For any choice of a set of simple roots $\Delta \subset \Phi (\Sk )$, we show the vanishing assertion of the Lemma is true for a $D \subset C_{0} + \ScptA_{\Delta}$ satisfying (i), (ii) and $\myht_{C_{0}}(D)$ sufficiently large.

\medskip

For any chamber $D (\neq C_{0}) \subset \ScptA$, define, as in \eqref{key-facet-convolve-ms}:
$$
E \ := \ {\mathcal E}_{C_{0}} (D) = F_0 \cap F_1 \cap \ldots \cap F_l \ ,
$$ 
the intersection of all outward faces $F_i$  of $D$. Let $\Phi_E$ be the set of gradients of affine roots $\psi$ such that $\psi=r$ on $E$.  
The set $\Phi_E$ is a root system, is independent of $r\in \frac{1}{N}\mathbb Z$,  and  
$$
\{\alpha_{F_0}, \alpha_{F_1}, \ldots ,\alpha_{F_l}\} 
$$ 
\noindent is a set of simple roots for $\Phi_E$. Thus, if $\psi$ is an affine root constant on $E$, then the gradient of $\psi$ is a sum $\sum_i x_i\alpha_{F_i}$ where all 
$x_i$ are either non-negative or non-positive integers. Let $F\supset E$ be the intersection of all outward faces with $F_0$ removed. (If $F_0$ is the only outward face then we set 
$F=D$.)  

\medskip

We make two remarks:

\smallskip

\begin{itemize}

\item[$\bullet$] If $\psi$ is an affine root such that $\psi=r$ on $E$ and $\psi>r$ on $F$ then the gradient of $\psi$ is $\sum_i x_i\alpha_{F_i}$ where $x_0<0$ and 
$x_i\leq 0$ for all $i$. To see this, let $\delta$ be a translation of $\ScptA$ such that $\alpha_{F_0}(\delta)=1$ and $\alpha_{F_i}(\delta)=0$ for $i\neq 0$. Let $x$ be an 
interior point of $E$.  Since $\alpha_{F_0}$ is outward, the ray starting at $x$ in the direction of $-\delta$ passes through the interior of $F$. Since $\psi>r$ on $F$, 
it follows that $x_0<0$, and $x_i\leq 0$ 
for all $i$, since all coefficients have the same sign.  
\smallskip

\item[$\bullet$] 
 Given $a\geq 0$, there exists $L_a \geq 0$ such that: if $D$ is a chamber in $C_0 + \ScptA_{\Delta}$ and $\mathrm{ht}_{C_0}(D) \geq L_a$ then there exists 
 a simple root $\beta\in \Delta$ such that $\beta(x-y) > a$ for all $y\in C_0$ and $x\in D$.  To see this, 
 consider the distance on  $\ScptA$ defined by 
 \[ 
d(x,y)=  \max_{\alpha\in \Delta} |\alpha (x-y)|, 
 \] 
 where  $x,y\in \ScptA$. 
 Now observe that  there exists $\epsilon >0$ such that $\alpha (x-y)>-\epsilon$ for all $x\in C_0$, $y\in C_0 + \ScptA_{\Delta}$ and all $\alpha\in \Delta$. 
 Let $b=\max(a, \epsilon)$.  Observe that a bounded set in $\ScptA$ contains only finitely many chambers.
  Thus if $L_a$ is large enough then for all 
 $D$ such that $\mathrm{ht}_{C_0}(D) \geq L_a$  we have  $d(x,y) >b$ for all $x\in D$ and all $y\in C_0$.  If $D$ is contained in $C_0 + \ScptA_{\Delta}$ this simply means that 
 there exists $\beta \in \Delta$ such that $\beta (x-y)>b \geq a$. 

\end{itemize}

\medskip

We continue the proof of the Lemma under our stated assumptions on the chamber $D$ (that $D \subset C_{0} + \ScptA_{\Delta}$ satisfies hypotheses (i), (ii)). 
 Let $a = (s-r) + h\epsilon$,  where $h$ is the height of the highest root, and $\epsilon$ as in the second bullet above. Then, by the same bullet,  there exists $L_{a}$ be such that if 
 $\myht_{C_{0}}(D) \ge L_{a}$, then there exists $\beta \in \Delta$ so that $\beta (x) - \beta (y) \ge a$ for all $y \in C_{0}$ and $x \in D$.

\medskip

By hypothesis (ii), there exists an outward face $F_0$ of $D$ so that when $\alpha_{F_0}$ is written as an (integer) sum of simple roots (in $\Delta$), the coefficient of $\beta $ is $\ge 1$.  Let $F$ be the intersection of the outward faces of $D$ different from $F_0$.  
(So $F\supset E := {\mathcal E}_{C_{0}}(D)$.) We claim that
$$
e_{\Gk_{C_{0},s}} \ \star \ e_{\Gk_{E,r^+}} \ = \ e_{\Gk_{C_{0},s}} \ \star \ e_{\Gk_{F,r^{+}}} \ .
$$
 Let $\psi$ be a virtual affine root $\psi>r$ on $F$ but $\psi=r$ on $E$. To prove the identity it suffices to show that 
 $\mathfrak X_{\psi} \subset \Gk_{C_0,s}$ for every such $\psi$. By the first bullet above, 
 the gradient of $\psi$ is a sum  $\sum_i x_i \alpha_{F_i}$ where  
 $x_i\leq 0$ and $x_0<0$.   Since all $\alpha_{F_i}$ are positive roots, the gradient of $\psi$ is a negative root 
 containing a non-zero (integer) multiple of $\beta$ in its expression as a sum of roots in $\Delta$. Recall that if $x\in F$ and $y\in C_0$, 
 then $\beta(x-y) > a$ and $\alpha(x-y) > -\epsilon$ for all other $\alpha\in \Delta$. It follows that 
 $\psi(y-x) > s-r$. Thus 
 \[ 
 \psi(y)=\psi(x) + \psi (y-x) >s, 
 \]

\noindent  hence $\mathfrak X_{\psi} \subset \Gk_{C_0,s}$ which proves the identity. We apply it (and associativity of convolution) to obtain

$$
\aligned
e_{\Gk_{C_{0},s}} \ \star \ \big( &{\underset {E \subset K \subset D} \sum} \ (-1)^{\dim ( K )} \, e^{{\,}\mathcal C}_{K } \, \big) \ \ = \ \ e_{\Gk_{C_{0},s}} \ \star \ \big( e_{\Gk_{E,r^+}} \ \star \ \big( {\underset {E \subset K \subset D} \sum} \ (-1)^{\dim ( K )} \, e^{{\,}\mathcal C}_{K } \, \big) \, \big) \\
&= \ \ e_{\Gk_{C_{0},s}} \ \star \ \big( \ e_{\Gk_{F,r^{+}}} \ \star \ \big( {\underset {E \subset K \subset D} \sum} \ (-1)^{\dim ( K )} \, e^{{\,}\mathcal C}_{K } \, \big) \ \big) \, \ .
\endaligned
$$

\noindent By Proposition \ref{g-ep-ms} the inner convolution of the last line vanishes.  

\end{proof}

\medskip

\begin{cor}\label{stabilization-convolve-ms}  Suppose $f \in \Cic (\Gk)$. Under the assumptions of Lemma {\ref{lemma-a-convolve-ms}}, the convolution sums 
$$
f \ \star \ \big( {\underset {K \, \subset \, \myBall (C_{0},m) } \sum} \ (-1)^{\dim ( F )} \, e_{K} \ \big) \
$$
\noindent stabilize, i.e., are constant as a function of $m$, for $m$ sufficiently large.  
\end{cor}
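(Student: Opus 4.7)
My plan is to deduce the corollary from Lemma \ref{lemma-a-convolve-ms} by a telescoping argument. Since $f \in \Cic(\Gk)$, there is an open compact subgroup $J \subset \Gk$ with $f \, = \, f \star e_J$; by associativity of convolution, it suffices to show that for all sufficiently large $m$,
\begin{equation*}
e_J \ \star \ \bigl( \, S_{m+1} \, - \, S_m \, \bigr) \ = \ 0, \qquad \text{where} \quad S_m \ := \ \sum_{K \, \subset \, \myBall(C_0,m)} (-1)^{\dim (K)} \, e^{\,\mathcal C}_{K}.
\end{equation*}

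Next I would parameterize the new terms appearing in $S_{m+1} - S_m$. Every facet $K$ contributing to this difference must be contained in some chamber $D$ with $\myht_{C_0}(D) = m+1$ and in no chamber of smaller height. From the description in \S\ref{b-convolve-ms} (following [\reBCM]), the subfacets of such a chamber $D$ that fail to lie inside $\myBall(C_0,m)$ are exactly the facets $K$ containing $\mathcal{E}_{C_0}(D)$, and each such facet $K$ arises for a unique chamber $D$ of height $m+1$. Since only finitely many chambers of $\ScptB_{N}$ have bounded Bruhat height, we obtain the finite identity
\begin{equation*}
S_{m+1} \ - \ S_m \ = \ \sum_{\substack{D \, \subset \, \ScptB_{N} \\ \myht_{C_0}(D) \, = \, m+1}} \ \ \sum_{\mathcal{E}_{C_0}(D) \, \subset \, K \, \subset \, D} (-1)^{\dim (K)} \, e^{\,\mathcal C}_{K}.
\end{equation*}

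Finally, let $L_0 = L_0(C_0, J)$ be the integer produced by Lemma \ref{lemma-a-convolve-ms}. If $m+1 \ge L_0$, then every chamber $D$ appearing in the outer sum satisfies $\myht_{C_0}(D) \ge L_0$, so the lemma applies and yields
\begin{equation*}
e_J \ \star \ \Bigl( \sum_{\mathcal{E}_{C_0}(D) \, \subset \, K \, \subset \, D} (-1)^{\dim (K)} \, e^{\,\mathcal C}_{K} \, \Bigr) \ = \ 0
\end{equation*}
for each such $D$. By linearity in each argument of the convolution and the finiteness of the outer sum, we conclude $e_J \star (S_{m+1} - S_m) = 0$ for all $m \ge L_0 - 1$. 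Telescoping then shows that $f \star S_m$ is independent of $m$ once $m \ge L_0 - 1$.

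\textbf{Main obstacle.} The real substance is already contained in Lemma \ref{lemma-a-convolve-ms}; the corollary itself is bookkeeping. The only point that demands care is confirming that the ``new'' facets entering $\myBall(C_0,m+1)$ are sorted uniquely by chambers of height $m+1$ via the facet $\mathcal{E}_{C_0}(D)$ of \eqref{key-facet-convolve-ms}; this is inherited verbatim from the setup of \S\ref{b-convolve-ms} and the analogous argument in [\reBCM].
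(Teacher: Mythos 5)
Your proposal is correct and is essentially the paper's own proof: the paper likewise writes the difference of the ball sums as a sum over chambers $D$ in $\myBall(C_0,m+1)\setminus\myBall(C_0,m)$ of the inner Euler--Poincar\'e sums over facets $K$ with ${\mathcal E}_{C_0}(D)\subset K\subset D$, and invokes Lemma \ref{lemma-a-convolve-ms} to kill each term. The only difference is that you spell out the reduction $f=f\star e_J$ and the unique sorting of new facets by chambers of height $m+1$, both of which the paper leaves implicit (the latter resting on the parametrization recalled in \S\ref{b-convolve-ms} from [\reBCM]).
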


\begin{proof}  \ The difference in the Euler-Poincar{\'{e}} convolution over the ball {\,}$\myBall (C_0,(m+1))$ and the ball $\myBall (C_0,m)${\,} is the sum of over the chambers $D \, \subset \, (\myBall (C_0,(m+1)) \backslash \myBall (C_0,m))$  of the Euler-Poincar{\'{e}} convolutions:
$$
e_{\Gk_{C_{0},s}} \ \star \ \big( {\underset {{\mathcal E}_{C_{0}}(D) \subset K \subset D} \sum} \ (-1)^{\dim ( K )} \, e^{{\,}\mathcal C}_{K } \ \big) \  ,
$$
\noindent which vanish by the Lemma \ref{lemma-a-convolve-ms}. 

\end{proof}

\bigskip

\begin{prop} \label{independence-a-convolve-ms} \ Under the assumptions of Corollary \eqref{stabilization-convolve-ms}, suppose $C_{1}$ is a chamber of $\ScptB_{N}$ which is adjacent to $C_{1}$, i.e., $C_{0}$ and $C_{1}$ share a common face.   Then, for m sufficiently large:
$$
f \ \star \ \big( {\underset {K \, \subset \, \myBall (C_{0},m) } \sum} \ (-1)^{\dim ( K )} \, e^{\mathcal C}_{K} \big) \ = \ f \ \star \ \big( {\underset {K \, \subset \, \myBall (C_{1},m) } \sum} \ (-1)^{\dim ( K )} \, e^{\mathcal C}_{K} \big) \ .
$$
\end{prop}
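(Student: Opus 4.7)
The plan is to interpose the common enlargement
\[
B \ := \ \myBall(C_{0}, M) \ \cup \ \myBall(C_{1}, M)
\]
for a suitably large $M$, and to show separately that $f \star EP(\myBall(C_{0}, M)) = f \star EP(B)$ and $f \star EP(\myBall(C_{1}, M)) = f \star EP(B)$, where I write $EP(S) := \sum_{K \subset \overline S} (-1)^{\dim K}\, e^{\my,\mathcal C}_{K}$. Concretely, I would first fix an open compact subgroup $J \subset \Gk$ such that $f = f \star e_{J}$, and apply Lemma \ref{lemma-a-convolve-ms} at the two base chambers $C_{0}$ and $C_{1}$ to obtain thresholds $L_{0}$ and $L_{1}$; then choose $M \ge \max(m, L_{0} - 1, L_{1} - 1)$. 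By Corollary \ref{stabilization-convolve-ms}, $f \star EP(\myBall(C_{i}, m)) = f \star EP(\myBall(C_{i}, M))$ for $i = 0, 1$, which reduces the assertion to the above two equalities.

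For the first equality, I would proceed in two steps. Since $C_{0}$ and $C_{1}$ are adjacent, they share a face lying in a unique virtual affine hyperplane $H_{*}$; a simple crossing count then gives $\myht_{C_{0}}(D) - \myht_{C_{1}}(D) = \pm 1$ for every chamber $D$, with sign determined by which side of $H_{*}$ contains $D$. In particular, each chamber $D \in B \setminus \myBall(C_{0}, M)$ lies on the $C_{1}$-side of $H_{*}$ and satisfies $\myht_{C_{0}}(D) = M + 1$. Next, using the uniqueness of the building-theoretic projection $\mathrm{proj}_{K}(C_{0})$ (the unique chamber containing $K$ that minimizes $\myht_{C_{0}}$), together with the characterization recalled in Section \ref{b-convolve-ms} (from [{\reBCM}:{\S}2.2]) that the facets $K \subset D$ with $K \not\subset \overline{\myBall(C_{0}, \myht_{C_{0}}(D) - 1)}$ are exactly those satisfying ${\mathcal E}_{C_{0}}(D) \subset K \subset D$, I would obtain the disjoint identification
\[
\overline B \ \setminus \ \overline{\myBall(C_{0}, M)} \ = \ \bigsqcup_{D \, \in \, B \setminus \myBall(C_{0}, M)} \ \{\, K \ : \ {\mathcal E}_{C_{0}}(D) \subset K \subset D \, \}.
\]
Summing $(-1)^{\dim K} e^{\my,\mathcal C}_{K}$ over this decomposition then expresses $EP(B) - EP(\myBall(C_{0}, M))$ as the sum over the chambers $D$ above of precisely the inner Euler--Poincar{\'{e}} convolutions appearing in Lemma \ref{lemma-a-convolve-ms}. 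Since each such $D$ has $\myht_{C_{0}}(D) = M + 1 \ge L_{0}$, every inner sum vanishes after convolution with $e_{J}$, and hence after convolution with $f$ (as $f = f \star e_{J}$).

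The second equality is obtained by the symmetric argument with the roles of $C_{0}$ and $C_{1}$ interchanged, applying Lemma \ref{lemma-a-convolve-ms} at base $C_{1}$ to the chambers of $B \setminus \myBall(C_{1}, M)$, which lie on the $C_{0}$-side of $H_{*}$ with $\myht_{C_{1}}(D) = M + 1$. The main obstacle is the disjoint enumeration of the new facets in $\overline B \setminus \overline{\myBall(C_{0}, M)}$: one must verify that any chamber $D_{0} \in B$ containing such a facet $K$ necessarily satisfies $\myht_{C_{0}}(D_{0}) = M + 1$, lies on the $C_{1}$-side of $H_{*}$, and therefore coincides with $\mathrm{proj}_{K}(C_{0})$, so that each new $K$ is counted exactly once via its unique $D_{K} := \mathrm{proj}_{K}(C_{0})$. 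Once this building-theoretic enumeration is secured, the proof reduces to a direct assembly of the two intermediate equalities with the stabilization provided by Corollary \ref{stabilization-convolve-ms}.
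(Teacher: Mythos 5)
Your proposal is correct and follows essentially the same route as the paper: the paper compares each ball to the intersection $\myBall(C_{0},m)\cap\myBall(C_{1},m)$ rather than to the union $B$, but in both versions the symmetric difference is organized chamber by chamber via the parametrization of the facets of $D$ outside the smaller ball by $\{\,K : {\mathcal E}_{C_{0}}(D)\subset K\subset D\,\}$, and each chamber's inner Euler--Poincar{\'{e}} contribution is killed by Lemma \ref{lemma-a-convolve-ms} after convolution with $f$. One small inaccuracy: $\myht_{C_{0}}(D)-\myht_{C_{1}}(D)$ is not always $\pm 1$ --- it equals $0$ exactly when no apartment contains $C_{0}$, $C_{1}$ and $D$, as the paper's case analysis notes --- but since only the bound $|\myht_{C_{0}}(D)-\myht_{C_{1}}(D)|\le 1$ is needed to conclude $\myht_{C_{0}}(D)=M+1$ for $D\in B\setminus\myBall(C_{0},M)$, your argument is unaffected.
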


\begin{proof} \quad We first observe the fixture of a base chamber $C_{0}$ results in the following: \  Suppose $F \subset \ScptB_{N}$ is a face.  The convex closure $C(C_{0},F)$ contains a unique chamber $D$ which contains $F$.  Furthermore, if $D' \ne D$ is another chamber containing $F$, then $\myht_{C_{0}}(D') = \myht_{C_{0}}(D)+1$.  So, for any face $F \subset \ScptB_{N}$, we define:

\begin{equation}\label{unique-convolve-ms}
{\ScptC}_{C_{0}}(F) \ := \ {\text{\rm{the chamber $D$ containing $F$, so that $F$ is an outward face of $D$}}} . 
\end{equation}

\noindent The chamber ${\ScptC}_{C_{0}}(F)$ is the unique chamber of $C(C_{0},F)$   containing $F$ (denoted as $D_F$ in \eqref{facet-prelim-ms}).

\medskip

As a second observation, we note that the adjacency of $C_{0}$ and $C_{1}$ means $|(\myht_{C_0} - \myht_{D_0})| \le 1$.  In fact, an explicit relationship is the following.  \ Take a chamber $D \subset \ScptB_{N}$ (possibly equal to $C_{0}$ or $C_{1}$), and consider two possibilities depending if there is or there is no apartment containing the three chambers $C_{0}$, $C_{1}$, and $D$.

\medskip

\noindent {\sc{Case}} \ $\nexists$ apartment $\ScptA$.  Let $F$ denote the common face of $C_{0}$ and $C_{1}$.  Here, the relationships between the convex closures $C(D,F)$, $C(D,C_{0})$, and $C(D,C_{1})$ is 
$$
C(D,C_{0}) \ = \ C(D,F) \ \cup \ C_{0} \quad {\text{\rm{and}}} \quad  C(D,C_{1}) \ = \ C(D,F) \ \cup \ C_{1} \ . 
$$ 
\noindent In particular, there exists $g \in \Gk$ which fixes $C(D,F)$ and takes $C_{0}$ to $C_{1}$, and therefore $\myht_{C_{0}}(D) = \myht_{C_{1}}(D)$.  

\medskip

\noindent {\sc{Case}} \ $\exists$ apartment $\ScptA$. \ \ In the apartment $\ScptA$, let $H$ denote the (refined) affine root hyperplane which separates the (adjacent) chambers $C_{0}$ and $C_{1}$.  If $D$ and $C_{0}$ (resp.~$C_{1}$) are on the same side of $H$, then 
$\myht_{C_{1}}(D) =  \myht_{C_{0}}(D) + 1$  (resp.~$\myht_{C_{0}}(D) =  \myht_{C_{1}}(D) + 1$).

\medskip

\noindent  We deduce, from this explicit height relationship, the relationship between the outward faces of $D$ with respect to $C_{0}$ and $C_{1}$ is the following:

\smallskip

\begin{itemize}
\item[$\bullet$] \ If $\nexists$ apartment $\ScptA$, then the outward facing faces of $D$ with respect to $C_{0}$ is the same as  with respect to $C_{1}$.

\smallskip

\item[$\bullet$] \ If $\exists$ apartment $\ScptA$, and $D$ has a face $E$ in the (refined) affine root hyperplane $H$, then $E$ is outward for $C_{0}$ (resp.~$C_{0}) $ when $D$ and $C_{0}$ (resp.~$C_{1}$) are on the same side of $H$.  If $D$ does not have a face on $H$, then the outward facing faces of $D$ with respect to $C_{0}$ is the sames as with respect to $C_{1}$.
\end{itemize}

\medskip


Suppose $C_{0}$ and $C_{1}$ are adjacent chambers with common face $F$, and $F' \ne F$ is any other face.

\begin{itemize} 
\item[$\bullet$] \ If $F$ and $F'$ are not aligned, then ${\ScptC}_{C_{0}}(F') =  {\ScptC}_{C_{1}}(F')$.  To see this, we consider $C(F,F')$.  That $F$ and $F'$ are not aligned means the maximal dimension facet $Y$ (denoted as $D_{F'}$ in Lemma \ref{facet-prelim-ms}) in $C(F,F')$ containing $F'$ is a chamber, and $F$ is a outward face $Y$.  Since $C(C_{0},F') \supset C(F.F')$ as well as $C(C_{1},F') \supset C(F.F')$, it follows  $Y$ is ${\ScptC}_{C_{0}}(F')$ and ${\ScptC}_{C_{1}}(F')$.

\smallskip

\item[$\bullet$] \ If $F$ and $F'$ are aligned there exists an apartment ${\ScptA}'$ containing $F$, $F'$, ${\ScptC}_{C_{0}}(F')$, and  ${\ScptC}_{C_{1}}(F')$, and in ${\ScptA}'$ the reflection across the affine hyperplane generated by $F$ swaps    ${\ScptC}_{C_{0}}(F')$, and  ${\ScptC}_{C_{1}}(F')$.  To see this, we note that $\dim (C(F,F'))$ is that of a face.  Pick an apartment $\ScptA''$ containing $C_{0}$ and $F$. We use $F \subset C_{0}$, to say $\ScptA''$ contains $C(F,F')$, and the later generates a hyperplane in $\ScptA''$.  The Iwahori subgroup $\Gk_{C_{0}}$ acts transitively on the apartments containing $C(C_{0},F')$.  There exists $h \in \Gk_{C_{0},0}$ fixing $C(C_{0},F')$ and moving $C_{1}$ to $\ScptA''$.  The apartment $\ScptA' := h^{-1}.\ScptA''$ satisfies the assertion.

\end{itemize}

\medskip  We are now ready to prove the Proposition. \ Consider the intersection 
\begin{equation}\label{intersection-convolve-ms}
{\text{\rm{Int}}}(m) \ := \ \myBall(C_{0},m) \ \cap \ \myBall(C_{1},m) \ .
\end{equation}

\noindent For $a \in \{ 0, 1\}$, the Euler-Poincar{\'{e}} sum of the idempotents $e^{\mathcal C}_{K}$ over the facets of the ball $\myBall (C_{a},m)$ equals:
$$
{\underset {K \subset {\text{\rm{Int}}}(m)} \sum } (-1)^{\dim (K)} e^{\mathcal C}_{K} \ \ + {\underset 
{\text{\rm{\tiny $D \subset \Big( \myBall (C_{a},m) \backslash {\text{\rm{Int}}}(m) \Big)$}}} \sum } \ \ \ {\underset {{\mathcal E}_{C_{a}}(D) \subset K \subset D} \sum } (-1)^{\dim (K)}  e^{\mathcal C}_{K} \ \ .
$$

\noindent So the difference of the Euler-Poincar{\'{e}} sums over $\myBall (C_{0},m)$ and $\myBall (C_{1},m)$ is

\begin{equation}\label{difference-convolve-ms} 
\aligned
{\underset 
{\text{\rm{\tiny $D \subset \Big( \myBall (C_{0},m) \backslash {\text{\rm{Int}}}((m)) \Big)$}}} \sum } \ &\ \ {\underset {{\mathcal E}_{C_{0}}(D) \subset K \subset D} \sum } (-1)^{\dim (K)}  e^{\mathcal C}_{K} \\
&\ \ - \ \ 
{\underset 
{\text{\rm{\tiny $D \subset \Big( \myBall (C_{1},m) \backslash {\text{\rm{Int}}}((m)) \Big)$}}} \sum } \ \ \ {\underset {{\mathcal E}_{C_{1}}(D) \subset K \subset D} \sum } (-1)^{\dim (K)}  e^{\mathcal C}_{K} \ \ .
\endaligned
\end{equation}

\noindent Suppose $m \ge 1$ and $D \subset ( \myBall (C_{0},m) \cup \myBall (C_{1},m) )$.

\begin{itemize}

\item[$\bullet$] {\sc{Case}} \ ${\nexists}$ apartment $\ScptA$ containing $C_{0}$, $C_{1}$ and $D$.  \ Here $D \subset {\text{\rm{Int}}}(m)$ and therefore it does not occur as a summation index value in \eqref{difference-convolve-ms} . 

\smallskip

\item[$\bullet$] {\sc{Case}} \ ${\exists}$ apartment $\ScptA$ containing $C_{0}$, $C_{1}$ and $D$. \ Here, $D$ occurs as a summation index value in the first (resp.~second) term of \eqref{difference-convolve-ms} when $\myht_{C_{0}}(D) = m$ and $\myht_{C_{1}}(D) = (m-1)$ (resp.~$\myht_{C_{1}}(D) = m$ and $\myht_{C_{0}}(D) = (m-1)$, and the Euler-Poincar{\'{e}} sum 
$$
f \ \star \ \Big(  {\underset {{\mathcal E}_{C_{0}}(D) \subset K \subset D} \sum } (-1)^{\dim (K)}  e^{\mathcal C}_{K} \ \Big) \qquad {\text{\rm{(resp. }}} f \ \star \ \Big(  {\underset {{\mathcal E}_{C_{1}}(D) \subset K \subset D} \sum } (-1)^{\dim (K)}  e^{\mathcal C}_{K} \ \Big) {\text{\rm{ ) }}} 
$$
\noindent vanishes for $m$ sufficiently large.

\end{itemize}

Given the above, we deduce that for $m$ sufficiently large that:

$$
\aligned
f \ \star \ \big( {\underset {K \, \subset \, \myBall (C_{0},m) } \sum} \ (-1)^{\dim ( K )} \, e^{\mathcal C}_{K} \big) \ &= \ f \ \star \ \big( {\underset {K \, \subset \, {\text{\rm{Int}}} (m) } \sum} \ (-1)^{\dim ( K )} \, e^{\mathcal C}_{K} \ \big) \ \\
&= \ f \ \star \ \big( {\underset {K \, \subset \, \myBall (C_{1},m) } \sum} \ (-1)^{\dim ( K )} \, e^{\mathcal C}_{K} \ \big) \ .
\endaligned
$$

\end{proof}

\medskip

\begin{cor} \label{independence-b-convolve-ms}  Under the assumptions of subsection \eqref{a-convolve-ms}, suppose $C_{0}$ and $C_{1}$ are two chambers of $\ScptB_{N}$, and $f \in \Cic (\Gk )$.  Then, for $m$ sufficiently large:  
$$
f \ \star \ \big( {\underset {K \, \subset \, \myBall (C_{0},m) } \sum} \ (-1)^{\dim ( K )} \, e^{\mathcal C}_{K} \big) \ = \ f \ \star \ \big( {\underset {K \, \subset \, \myBall (C_{1},m) } \sum} \ (-1)^{\dim ( K )} \, e^{\mathcal C}_{K} \ \big) \ .
$$
\end{cor}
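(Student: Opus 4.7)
The plan is to reduce the statement about two arbitrary chambers to the adjacent case already handled in Proposition \ref{independence-a-convolve-ms}. The Bruhat--Tits building $\ScptB_{N}$ is chamber-connected: given any two chambers $C_{0}$ and $C_{1}$, there exists a finite gallery
\[
C_{0} \, = \, D_{0}, \, D_{1}, \, \dots, \, D_{t} \, = \, C_{1}
\]
of chambers in $\ScptB_{N}$ such that $D_{i}$ and $D_{i+1}$ share a common face for each $0 \le i \le t-1$.

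First, I would fix $f \in \Cic(\Gk)$ and apply Proposition \ref{independence-a-convolve-ms} to each consecutive adjacent pair $(D_{i}, D_{i+1})$. This gives, for each $i$, an integer $M_{i} \in \bN$ such that for all $m \ge M_{i}$,
\[
f \ \star \ \big( {\underset {K \, \subset \, \myBall (D_{i},m) } \sum} \ (-1)^{\dim ( K )} \, e^{\mathcal C}_{K} \big) \ = \ f \ \star \ \big( {\underset {K \, \subset \, \myBall (D_{i+1},m) } \sum} \ (-1)^{\dim ( K )} \, e^{\mathcal C}_{K} \big) \ .
\]
Setting $M := \max\{M_{0}, M_{1}, \dots, M_{t-1}\}$, which exists since the gallery is finite, the desired equality between the Euler--Poincar\'e sums over $\myBall(C_{0},m)$ and $\myBall(C_{1},m)$ follows for all $m \ge M$ by telescoping through the gallery.

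There is essentially no obstacle here, since Proposition \ref{independence-a-convolve-ms} handles the only nontrivial case; the remaining work is bookkeeping. One minor point worth noting explicitly: Proposition \ref{independence-a-convolve-ms} requires that the threshold $m$ is large enough relative to $f$ and the pair $(D_{i}, D_{i+1})$, but since each $M_{i}$ depends only on $f$ together with the two adjacent chambers, and the gallery is of finite length $t$, taking the maximum gives a single threshold $M$ valid uniformly along the gallery. Thus the corollary follows without further difficulty.
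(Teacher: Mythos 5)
Your proposal is correct and is essentially the same argument the paper gives: connect $C_{0}$ to $C_{1}$ by a finite gallery of pairwise adjacent chambers and apply Proposition \ref{independence-a-convolve-ms} to each consecutive pair, taking $m$ beyond the maximum of the finitely many thresholds. No further comment is needed.
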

\begin{proof}  \ \ We apply Proposition \ref{independence-a-convolve-ms} to a sequence of chambers $C_{0} = D_{0} \, , \, D_{1} , \dots , , \, D_{n} = C_{1}$ in which has the property that $D_{i}$ and $D_{i+1}$ are adjacent. 
\end{proof}

\bigskip

\noindent We combine the stabilization Corollary \eqref{stabilization-convolve-ms} and the independence Corollary \eqref{independence-b-convolve-ms} to deduce the existence of a distribution ${\mydistD}^{\mathcal C}$, which we write descriptively as: 

\begin{equation}\label{distribution-convolve-ms}
{\mydistD}^{\mathcal C} \ = \ \big( {\underset {F \, \subset \, \ScptB_{N} } \sum} \ (-1)^{\dim ( F )} \, e^{\mathcal C}_{F} \ \big) \ ,
\end{equation}
\noindent so that for any $f \in \Cic (\Gk )$ and any chamber $C_{0}$, there is an integer $N(f,C_{0})$ so that 

\begin{equation}
f \ \star \ D^{\mathcal C} \ = \ f \ \star \ \big( {\underset {K \, \subset \, \myBall (C_{0},m) } \sum} \ (-1)^{\dim ( K )} \, e^{\mathcal C}_{K} \big) \qquad {\text{\rm{for $m \ge N(f,C_{0})$}}} . 
\end{equation}

\bigskip


\begin{thm}\label{main-convolve-ms} Suppose $\mk$ is a non-archimedean local field and $\mG$ is an absolutely quasisimple linear algebraic group defined over $\mk$.  Let $\Gk = \mG (\mk )$ be the group of $\mk$-rational points, and $\ScptB = \ScptB (\Gk )$ the Bruhat--Tits building of $\Gk$.  For $N \in \bN$, let $\ScptB_{N}$ be the refined building, and suppose ${\mathcal C} = \{ (F, \chi ) \}$ is the equivalence class of a cuspidal associate pair of depth $r \in {\frac{1}{N}}{\bN_{+}}$.  Let  $\{ e^{\mathcal C}_{F} \}$ be the $\Gk$-equivariant system of idempotents defined in \eqref{idempotent-pontryagin-ms}.   Then, the distribution 
$$
{\mydistD}^{\mathcal C}
\, = \, {\underset {E \subset \ScptB_{N}} \sum} \ (-1)^{\dim (E)} \, e^{\mathcal C}_{E}
$$
\noindent of \eqref{distribution-convolve-ms} satisfies the following:
\smallskip
\begin{itemize} 
\item[(i)] ${\mydistD}^{\mathcal C}$ is $\Gk$-invariant.
\smallskip
\item[(ii)] ${\mydistD}^{\mathcal C}$ is essentially compact.
\smallskip
\item[(iii)]  For a facet $K \subset \ScptB_{N}$, define
$$
e^{r}_{K} \ := \ {\mfrac{1}{\meas (\Gk_{K,r^{+}} )}} \ 1_{\Gk_{K,r^{+}}} \ \ .
$$

\noindent Then,
$$
{\mydistD}^{\mathcal C} \ \star \ e^{r}_{K} \ = \ e^{\mathcal C}_{K} \ .
$$

\smallskip

\item[(iii.1)] If {\,}$(K, \xi) \notin {\mathcal C}$, then {\,}${\mydistD}^{\mathcal C} \star e_{K,\xi} \, = \, 0$.
\smallskip
\item[(iii.2)] If {\,}$(K, \xi) \in {\mathcal C}$, then {\,}${\mydistD}^{\mathcal C} \star e_{K,\xi} \, = \, e_{K,\xi}$.

\medskip

\item[(iv)] If ${\mathcal D} \, = \, \{ (F', \chi' ) \} \, \neq \, {\mathcal C}$ is another cuspidal associate pair of depth $r \in {\frac{1}{N}}{\bN_{+}}$, then 
$$
{\mydistD}^{\mathcal C} \ \star \ {\mydistD}^{{\mathcal D}} \ = \ 0 \quad .
$$
\smallskip

\item[(v.1)]  The depth  $\le r$ projector ${P}_{\le r} \, := \, \big( {\underset {F \, \subset \, \ScptB_{N} } \sum} \ (-1)^{\dim ( F )} \, e_{\Gk_{F,r^{+}}} \big)$ is decomposed as
$$
{P}_{\le r} \ =  \ {\underset {\text{\rm{${\mathcal C}$ of depth $r$}}} \sum } {\mydistD}^{\mathcal C} \ .
$$

\item[(v.2)] $\mydistD^{\mathcal C}$ is idempotent.

\end{itemize} 
\end{thm}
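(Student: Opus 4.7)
The plan is to prove the parts in the order (ii), (i), (iii), (iii.1)--(iii.2), (iv), (v.1), (v.2), with (iii) being the technical heart from which the other statements follow formally.

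Parts (i) and (ii) are immediate consequences of Section 6. Corollary \ref{stabilization-convolve-ms} shows that for each $f \in \Cic(\Gk)$ the convolution $f \star \sum_{F \subset \myBall(C_0, m)} (-1)^{\dim F} e^{\mathcal C}_F$ stabilizes in $m$, and Corollary \ref{independence-b-convolve-ms} shows the stabilized value is independent of $C_0$; this gives a well-defined $\mydistD^{\mathcal C} \star f \in \Cic(\Gk)$, hence (ii). For (i), the system $\{e^{\mathcal C}_F\}$ is $\Gk$-equivariant by construction, so conjugation by $g \in \Gk$ merely relabels the terms of the $C_0$-independent Euler--Poincar\'e sum.

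The crux is (iii). Fix a facet $K$ and a base chamber $C_0 \supseteq K$; by (ii), $\mydistD^{\mathcal C} \star e^r_K$ equals the truncated Euler--Poincar\'e sum $\sum_{F \subset \myBall(C_0, m)} (-1)^{\dim F} e^{\mathcal C}_F \star e^r_K$ for $m$ large. I would analyze each term using $e^{\mathcal C}_F \star e^r_F = e^{\mathcal C}_F$ (character orthogonality on the abelian group $\Gk_{F,r}/\Gk_{F,r^+}$) together with Proposition \ref{convolution-prelim-ms}:
$$
e^{\mathcal C}_F \star e^r_K \;=\; e^{\mathcal C}_F \star e_{\Gk_{D_F, r^+}} \star e_{\Gk_{D_K, r^+}},
$$
where $D_F, D_K \subset C(F,K)$ are the unique maximal-dimensional facets containing $F$ and $K$. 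If $F \subsetneq D_F$, the cuspidality of the characters comprising $e^{\mathcal C}_F$ combined with Corollary \ref{key-cor-pontryagin-ms} forces this product to vanish. Only facets $F$ aligned with some $D_K \supseteq K$ lying in ${\overline{\mathcal F(\mathcal C)}}$ survive, and a telescoping Euler--Poincar\'e argument patterned on Proposition \ref{g-ep-ms} would organize these fibers over the link of $K$ inside $C_0$ and collapse the sum to $e^{\mathcal C}_K$. The main obstacle is the combinatorial bookkeeping of this telescoping: the contributing facets $F$ range over all of ${\overline{\mathcal F(\mathcal C)}}$ aligned with subfacets of $C_0$ (not merely over the subfacets of $C_0$ themselves), and one must verify that the alternating signs cancel in a way that produces exactly $e^{\mathcal C}_K$ carrying the sign $(-1)^{\dim K}$.

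Once (iii) is established, the remaining parts are formal. For (iii.1) and (iii.2), factor $\mydistD^{\mathcal C} \star e_{K,\xi} = (\mydistD^{\mathcal C} \star e^r_K) \star e_{K,\xi} = e^{\mathcal C}_K \star e_{K,\xi}$ and apply character orthogonality on $\Gk_{K,r}/\Gk_{K,r^+}$: the result is $e_{K,\xi}$ if $\xi \in \chi(K)$ and $0$ otherwise, which by Corollary \ref{key-cor-pontryagin-ms} translates, for cuspidal $(K,\xi)$, to membership in $\mathcal C$. Part (iv) follows by expanding $\mydistD^{\mathcal D} = \sum_K (-1)^{\dim K} e^{\mathcal D}_K$ and applying (iii.1) termwise, since characters in $e^{\mathcal D}_K$ lie in $\chi(K)$ for the class $\mathcal D$ but not for $\mathcal C \neq \mathcal D$. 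For (v.1), every character of $\Gk_{F,r}/\Gk_{F,r^+}$ is parabolically inflated from a cuspidal pair unique up to associate equivalence (Proposition \ref{uniqueness-pontryagin-ms} and Proposition \ref{associattivity-cuspidal-pairs-pontryagin-ms}), so $\sum_{\mathcal C} e^{\mathcal C}_F = e^r_F$, and summing with alternating signs recovers the BKV formula \eqref{bkv-intro-ms} for $P_{\le r}$. Finally, (v.2) follows from $\mydistD^{\mathcal C} \star \mydistD^{\mathcal C} = \sum_K (-1)^{\dim K} \mydistD^{\mathcal C} \star e^{\mathcal C}_K$: expanding $e^{\mathcal C}_K = \sum_{\chi \in \chi(K)} e_{K,\chi}$ and applying the extension of (iii.2) to all characters in $\chi(K)$ (by the same orthogonality argument) yields $\mydistD^{\mathcal C} \star e^{\mathcal C}_K = e^{\mathcal C}_K$, hence $\mydistD^{\mathcal C} \star \mydistD^{\mathcal C} = \mydistD^{\mathcal C}$.
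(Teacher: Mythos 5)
Your overall architecture agrees with the paper's: (i) and (ii) are read off from Corollaries \ref{stabilization-convolve-ms} and \ref{independence-b-convolve-ms}, (iii) is the technical heart, and (iii.1)--(v.2) then follow formally by the same orthogonality and uniqueness-of-cuspidal-support arguments the paper uses. The formal deductions in your last paragraph are all fine.

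The genuine gap is in (iii) itself: you name the cancellation as ``the main obstacle'' and then do not carry it out, and the term-by-term strategy you propose (analyzing each $e^{\mathcal C}_{F} \star e^{r}_{K}$ via $D_{F}$, $D_{K}$ and hoping cuspidality kills everything with $F \subsetneq D_{F}$) does not work as stated, because individual terms need not vanish --- it is only alternating sums over suitable packets of facets that vanish, and your worry about facets ranging over all of $\overline{{\mathcal F}({\mathcal C})}$ is exactly the symptom of not having the right packets. The paper's resolution is a chamber-by-chamber decomposition of the ball: $\myBall (C_{0},m)$ is the disjoint union of the facets of $C_{0}$ together with, for each chamber $D \neq C_{0}$ in the ball, the facets $J$ with ${\mathcal E}_{C_{0}}(D) \subseteq J \subseteq D$. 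The contribution of $C_{0}$ collapses to $e^{\mathcal C}_{K}$ by the identity $e^{r}_{K} \star e^{\mathcal C}_{J} = e^{\mathcal C}_{C(K,J)}$ and the Euler-characteristic identity on the simplex $C_{0}$; the contribution of each $D \neq C_{0}$ vanishes because Lemma \ref{lemma-b-convolve-ms} guarantees $\dim \big( C({\mathcal E}_{C_{0}}(D),K) \big) > \dim \big( {\mathcal E}_{C_{0}}(D) \big)$, so that after inserting $e^{r}_{L}$ (with $L$ the maximal facet of $C(K,{\mathcal E}_{C_{0}}(D))$ containing ${\mathcal E}_{C_{0}}(D)$) the fibering argument of Proposition \ref{g-ep-ms} produces repetitions that cancel in the alternating sum. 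Note this argument never invokes cuspidality in step (iii) and never needs to compare facets across different chambers; supplying Lemma \ref{lemma-b-convolve-ms} and this decomposition is precisely what is missing from your proposal.
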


\bigskip

As a preparation to the proof of Theorem \ref{main-convolve-ms}, we state and prove:

\begin{lemma}\label{lemma-b-convolve-ms} \quad Fix a base chamber {\,}$C_{0} \subset \ScptB (\Gk)_{N}$, and a facet $K \subset C_{0}$.  For any chamber $D \neq C_{0}$:
$$
\dim ( C({\mathcal E}_{C_{0}} (D) , K ) ) \ > \ \dim ( {\mathcal E}_{C_{0}} (D)) \ . 
$$
\end{lemma}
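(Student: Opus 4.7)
The plan is to fix an apartment $\ScptA$ of $\ScptB_{N}$ containing both $C_{0}$ and $D$ (any two chambers of the building lie in a common apartment, and refining the simplicial structure preserves this), and to prove the slightly stronger statement
\[
\overline{C_{0}} \cap \myAff (E) \ = \ \emptyset, \qquad \text{where } E := \mathcal{E}_{C_{0}}(D).
\]
Granting this, any $K \subset \overline{C_{0}}$ is disjoint from $\myAff (E)$, so $\myAff (E \cup K) \supsetneq \myAff (E)$, and hence $\dim C(E,K) = \dim \myAff (E \cup K) > \dim \myAff (E) = \dim E$, which is the desired inequality.

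First I will describe $E$ intrinsically inside the simplex $D \subset \ScptA$. Label the vertices of $D$ as $v_{0}, \ldots, v_{\ell}$ and let $F_{i}$ be the face opposite to $v_{i}$, with wall $H_{i} := \myAff (F_{i})$; partition $\{0,\ldots,\ell\} = O \sqcup I$ according to whether $F_{i}$ is outward or inward relative to $C_{0}$. Because $D \neq C_{0}$, a minimal gallery from $C_{0}$ to $D$ crosses at least one wall of $D$, so $I$ is non-empty. Since $E = \bigcap_{i \in O} F_{i}$, the standard identification of faces of a simplex gives $E$ as the sub-simplex with vertex set $\{v_{j} : j \in I\}$, whence $\myAff (E) = \myAff \{v_{j} : j \in I\}$.

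The second step is a short half-space calculation. For each $j \in I$ pick an affine functional $\phi_{j}$ on $\ScptA$ with $\{\phi_{j} = 0\} = H_{j}$ and $\phi_{j} > 0$ on the $D$-side of $H_{j}$. Because $F_{j}$ is inward, the wall $H_{j}$ is exactly one of the walls crossed by any minimal gallery from $C_{0}$ to $D$, so $C_{0}$ sits strictly on the non-$D$-side of $H_{j}$ and hence $\phi_{j}(x) \leq 0$ for every $x \in \overline{C_{0}}$; meanwhile $\phi_{j}(v_{j}) > 0$, and $\phi_{j}(v_{j'}) = 0$ for every $j' \in I \setminus \{j\}$ (since $v_{j'} \in F_{j} \subset H_{j}$). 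Writing an arbitrary $p \in \myAff (E)$ uniquely as $p = \sum_{j \in I} s_{j} v_{j}$ with $\sum_{j \in I} s_{j} = 1$, one evaluates $\phi_{j_{0}}(p) = s_{j_{0}} \phi_{j_{0}}(v_{j_{0}})$ for every $j_{0} \in I$. If such $p$ also lay in $\overline{C_{0}}$, all these numbers would be $\leq 0$, forcing $s_{j_{0}} \leq 0$ for every $j_{0} \in I$ and contradicting $\sum_{j \in I} s_{j} = 1$. Thus $\overline{C_{0}} \cap \myAff (E) = \emptyset$.

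There is no real obstacle once the correct strengthening is identified; the only subtleties are the explicit description of $E$ via the vertices of $D$ indexed by inward faces, and the sign dichotomy between $v_{j}$ (strictly on the $D$-side of $H_{j}$) and $\overline{C_{0}}$ (in the closed non-$D$-side), which is precisely the content of ``$F_{j}$ inward''. Once these are in place, the argument is formal affine geometry in a single simplex.
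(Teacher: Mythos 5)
Your proof is correct and takes essentially the same route as the paper's: both arguments reduce the dimension inequality to showing that $K$ cannot lie in $\myAff ({\mathcal E}_{C_{0}}(D))$, and both exploit the fact that each inward wall of $D$ separates $C_{0}$ from $D$, so that $\overline{C_{0}}$ lies in the closed negative half-space of every such wall while the vertices of ${\mathcal E}_{C_{0}}(D)$ sit strictly on the positive side. Your barycentric-coordinate computation is a cleaner, fully explicit version of the paper's terser assertion that the corresponding half-space intersections are cones with empty intersection.
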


\begin{proof} \ \ Since {\,}$\dim ( C({\mathcal E}_{C_{0}} (D), K ) )  \ge \dim ( {\mathcal E}_{C_{0}} (D) )$, if suffices to show {\,}$\dim ( C({\mathcal E}_{C_{0}} (D), K ) ) = \dim ( {\mathcal E}_{C_{0}} (D) )$ leads to a contradiction.   Recall, 
$$
\myAff ( {\mathcal E}_{C_{0}} (D) ) \ = \ {\underset {F \in c_{C_{0}}(D)} \bigcap } \myAff (F) 
$$ 
\noindent is the affine subspace generated by ${\mathcal E}_{C_{0}} (D)$.  If $\dim (C({\mathcal E}_{C_{0}} (D) , K ) ) = \dim ( {\mathcal E}_{C_{0}} (D))$, then the facet $K$ must lie in $\myAff ( {\mathcal E}_{C_{0}} (D) )$.  \ In opposition to the facet ${\mathcal E}_{C_{0}} (D)$, set  

$$
\aligned
{\mathcal F}_{C_{0}} \, :&= \, {\underset {F \in p_{C_{0}}(D)} \bigcap } \ F \ \ \ {\text{\rm{(the facet of $D$ opposite to ${\mathcal E}_{C_{0}} (D)$), and}}} \\
\myAff ( {\mathcal F}_{C_{0}} (D) ) \, &= \, {\underset {F \in p_{C_{0}}(D)} \bigcap } \myAff (F) \ \ \ {\text{\rm{(the affine subspace generated by ${\mathcal F}_{C_{0}} (D)$)}}} \ .
\endaligned
$$

\noindent That ${\mathcal E}_{C_{0}} (D)$ and ${\mathcal F}_{C_{0}} (D)$ are opposite facets of $D$  means the two affine subspaces  $\myAff ( {\mathcal E}_{C_{0}} (D) )$ and $\myAff ( {\mathcal F}_{C_{0}} (D) )$ have empty intersection.   By definition, each face $F \subset D$, is contained in the zero hyperplane $H_{\pm \psi_{F}}$ of a pair of virtual affine root.  We choose the sign so that $\mygrad (\psi_{F} ) = \alpha_{F}$, i.e., the outward/inward direction of the face. Then, 
$$
K \, \subset \, \Big( \, {\underset {F \in p_{C_{0}}(D)} \bigcap } H_{\psi_{F} \le 0}\, \Big) \ \ \ {\text{\rm{and}}} \ \ \  \myAff ({\mathcal F}_{C_{0}}(D)) \, \subset \, \Big( \, {\underset {F \in c_{C_{0}}(D)} \bigcap } H_{\psi_{F} \ge 0} \, \Big) \ .
$$  

\noindent But, the above two intersections are cones whose intersection is empty-- a contradiction to the assumption $K \subset \myAff ({\mathcal F}_{C_{0}}(D))$.  \ Thus, the lemma is proved.


\end{proof}


\noindent{\it Proof of Theorem \ref{main-convolve-ms}} \quad Assertion (i) on $\Gk$-invariance of $D^{\mathcal C}$ is a consequence of the independence of base chamber Corollary \eqref{independence-b-convolve-ms}.  

\medskip

Assertion (ii) that $D^{\mathcal D}$ is an essentially compact distribution is the stabilization Corollary \eqref{stabilization-convolve-ms}.

\medskip

To prove assertion (iii), we use the independence of base chamber Corollary \eqref{independence-b-convolve-ms} to assume that the facet $K$ is contained in $C_{0}$.  Then,

$$
\aligned
e^{r}_{K} \ \star \ &\big( \, {\underset {J \subset C_{0}} \sum } (-1)^{\dim (J)} \, e^{\mathcal C}_{J} \, \big) \ = \ {\underset {J \subset C_{0}} \sum } \ (-1)^{\dim (J)} \ e^{r}_{K} \ \star \ e^{\mathcal C}_{J} \\
& \quad  = \ {\underset {J \subset C_{0}} \sum } \ (-1)^{\dim (J)} \ e^{r}_{K} \ \star \ ( e^{r}_{J} \, \star \, e^{\mathcal C}_{J} ) \  =  \ {\underset {J \subset C_{0}} \sum } \ (-1)^{\dim (J)} \ ( \, e^{r}_{K} \ \star \  e^{r}_{J} \, ) \, \star \, e^{\mathcal C}_{J}  \\
& \quad = \ {\underset {J \subset C_{0}} \sum } \ (-1)^{\dim (J)} \ e^{r}_{C(K,J)} \, \star \, e^{\mathcal C}_{J} \  = \ {\underset {J \subset C_{0}} \sum } \ (-1)^{\dim (J)} \ e^{\mathcal C}_{C(K,J)}  \ = \ e^{\mathcal C}_{K} \ \ .\\
\endaligned 
$$

\smallskip

\noindent  Thus, a sufficient condition to prove statement (iii) is that if $D$ is a chamber not equal to $C_{0}$, it is the case that the convolution 
$$
e^{r}_{K} \ \star \ \big( {\underset {{\mathcal E}_{C_{0}}(D) \, \subset J \, \subset D} \sum} \ (-1)^{\dim (J)} \, e^{\mathcal C}_{J} \ \big) \  \ {\text{\rm{vanishes}}}{\,}.
$$
\noindent To see this,  let $L$ be the unique facet of $C(K,{\mathcal E}_{C_{0}}(D))$ which contains ${\mathcal E}_{C_{0}}(D)$ and is of maximal dimension.  we calculate:

$$
\aligned
e^{r}_{K} \ \star \ &\big( {\underset {{\mathcal E}(D) \, \subset J \, \subset D} \sum} \ (-1)^{\dim (J)} \, e^{\mathcal C}_{J} \ \big) \ = \ {\underset {{\mathcal E}_{C_{0}}(D) \, \subset J \, \subset D} \sum} \ (-1)^{\dim (J)} \, e^{r}_{K} \, \star \, e^{\mathcal C}_{J} \\
& \quad  = \ {\underset {{\mathcal E}_{C_{0}}(D) \, \subset J \, \subset D} \sum} \ (-1)^{\dim (J)} \, e^{r}_{K} \, \star \,  e^{r}_{J} \, \star \, e^{\mathcal C}_{J} \ = \ {\underset {{\mathcal E}_{C_{0}}(D) \, \subset J \, \subset D} \sum} \ (-1)^{\dim (J)} \, e^{r}_{K} \, \star \, e^{r}_{L} \, \star \,  e^{r}_{J} \, \star \, e^{\mathcal C}_{J} \\
& \quad = \ {\underset {{\mathcal E}_{C_{0}}(D) \, \subset J \, \subset D} \sum} \ (-1)^{\dim (J)} \, e^{r}_{K} \, \star \, e^{r}_{C(L,J)} \, \star \, e^{\mathcal C}_{J} \ . \\
\endaligned
$$

\noindent The summation of the last line vanishes since ${\mathcal E}_{C_{0}}(D) \subsetneq L$ creates repetition in the summation.  \ Assertions (iii.1) and (iii.2) are immediate consequences of (iii).

\medskip

To complete the proof of the Theorem, we note assertion (iv) is a consequence of (iii), assertion (v.1) is obvious, and (v.2) follows from the orthogonality relation (iv) and (v.1). 

\hfill \qed


\vskip 0.70in 
 
\section{Resolutions}\label{resolutions-ms}

\medskip

\subsection{Review of work of Schneider--Stuhler and Bestvina--Savin} \ 

\medskip

We assume the hypotheses of section \eqref{a-convolve-ms}.  For a facet $K \subset \ScptB_{N}$, set 
\begin{equation}\label{bkv-idempotent-resolutions-ms}
e^{r}_{K} \ = \ {\frac{1}{\meas (\Gk_{K,r^{+}} )}} \ 1_{\Gk_{K,r^{+}}} \ \ .
\end{equation}

\noindent If $(\pi , V_{\pi})$ is a smooth representation of $\Gk$, note that 
$$
\pi ( \, e^{r}_{K} \, ) (V_{\pi}) \ = \ V^{\Gk_{K,r^{+}}}_{\pi} \ \ .
$$
 
\noindent Let ${\mathcal K} = \myOrb_{\Gk}(K)$ be the $\Gk$-orbit of the facet $K$.  Define the dimension of the orbit ${\mathcal K}$ to be the dimension of the facet $K$.  The vector space 
\begin{equation}\label{defn-a-resolutions-ms}
W^{r , {\mathcal K}}_{\pi } \ := \ \{ \ (J , v) \ | \ J \in {\mathcal K} \ , \ v_{J} \, \in \, \pi ( \, e^{r}_{J} \, ) (V_{\pi} ) \ {\text{\rm{ , and $v_{J} = 0$ for almost all $J$}}} \ \}
\end{equation}
\noindent is a smooth representation of $\Gk$ with the action 
$$
g \, . \, (J, v_{J}) \ := \ ( \,  g.J \, , \, \pi (g) (v_{J}) \, ) \ \ .
$$  
\noindent It is equivalent to the smooth representation
\begin{equation}\label{induced-resolutions-ms}
\big( \ \mycInd{\Gk}{\Gk_{K,r^{+}}} ({\text{\rm{triv}}}) \ \big) \ \otimes_{\bC} \ V^{\Gk_{K,r^{+}}}_{\pi} \ , 
\end{equation}
\noindent and therefore it is a projective $\Gk$-module.

\medskip

\noindent For $k \in \{ 1 , 2 , \dots , (\ell = {\text{\rm{rank}}} (\Gk ) ) \}$, set 

\begin{equation}\label{defn-b-resolutions-ms}
W^{r,k}_{\pi} \ := \ {\underset 
{\text{\rm{\tiny $\begin{matrix} {\mathcal K} \\ {\dim ({\mathcal K}) = k} \end{matrix}$}}} \bigoplus} W^{r , {\mathcal K}}_{\pi } \ .
\end{equation}

\medskip 

A key property of the idempotents $e^{r}_{K}$'s is 
$$
{\text{\rm{$\forall${\,} facet {\,}$E$, and subfacet {\,}$F \subset \partial (E)$,}}} \ \ {\text{\rm{it is the that case}}} \ \ \pi (e^{r}_{F}) (V_{\pi}) \subset \pi (e^{r}_{E}) (V_{\pi}) \ .
$$

\noindent This means, the boundary map
\begin{equation}\label{boundary-resolutions-ms}
\aligned
\partial \ : \ W^{r,{\mathcal K}}_{\pi} \ &\xrightarrow{\hskip 0.50in} \ W^{r,( \dim ({\mathcal K} )- 1)}_{\pi} \\
\partial \, \big( \, \oplus \ (E,v_{E}) \, \big) \ &\xrightarrow{\hskip 0.50in} \ \oplus \ ( \, \partial (E) , v_E \, ) \qquad {\text{\rm{(same $v_{E}$'s on both sides)}}} ;
\endaligned
\end{equation}
\noindent is defined, and a $\Gk$-map.  Consequently, there is a boundary $\Gk$-map
$$
W^{r,k}_{\pi} \ \xrightarrow{\ \ \partial \ \ } \ W^{r, (k-1)}_{\pi} \ \ 
k \in \{ 1 , 2 , \dots , \ell \} \ \ .
$$

\noindent Let 
$$
\aligned
W^{r,0}_{\pi} \qquad \ &\xrightarrow{\hskip 0.40in} \ V_{\pi} \\
{\underset { {\text{\rm{ \tiny $\begin{matrix} {\text{\rm{$x_0$ vertex}}} \\
{\text{\rm{(dim 0)}}} \end{matrix}$ }}} } \bigoplus } ( x_{0} , v_{x_{0}}) \ &\xrightarrow{\hskip 0.40in} \ \sum v_{x_{0}} \ \ , 
\endaligned
$$  
\noindent be the augmentation map (a $\Gk$-map).

\bigskip

\begin{thm}\label{ssbs-resolutions-ms} \quad  {\text{\rm{({\bf{Schneider--Stuhler, Bestvina--Savin}})}}} \quad 

\noindent Under the assumptions of section \eqref{a-convolve-ms}, let $\ell = \dim (\ScptB (\Gk ))$, and assume a (smooth) $\Gk$-module $V_{\pi}$ is generated by the subspaces $\pi (e^{r}_{x_{0}}) (V_{\pi})$ as $x_0$ runs over the vertices of $\ScptB (\Gk )_{N}$, i.e., the augmentation map is surjective.  Then 
$$
0 \xrightarrow{\hskip 0.40in} W^{r,\ell}_{\pi} \xrightarrow{\hskip 0.20in \partial \hskip 0.20in}  W^{r,(\ell -1)}_{\pi} \xrightarrow{\hskip 0.20in \partial \hskip 0.20in}  \cdots   \xrightarrow{\hskip 0.20in \partial \hskip 0.20in} W^{r,1}_{\pi} \xrightarrow{\hskip 0.20in \partial \hskip 0.20in} W^{r,0}_{\pi}  \ \xrightarrow{\hskip 0.40in} V_{\pi}
$$ 
\noindent is a (projective) resolution of $V_{\pi}$.
\end{thm}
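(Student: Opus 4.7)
The proof splits naturally into two parts: (a) verifying that each term $W^{r,k}_\pi$ is a projective object in $\Omega(\Gk)$, and (b) establishing exactness of the augmented complex. Part (a) is immediate from the identification in \eqref{induced-resolutions-ms}: each summand $W^{r,\mathcal K}_\pi$ is compactly induced from the stabilizer of a representative facet (an open compact subgroup of $\Gk$, up to a finite index issue), and such compact inductions are projective in the category of smooth $\Gk$-representations. Finite direct sums over the finitely many $\Gk$-orbits of $k$-dimensional facets preserve projectivity.

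For part (b), the strategy is to view $W^{r,\bullet}_\pi$ as the complex of finitely supported sections of a coefficient system $\mathcal V$ on the polysimplicial complex $\ScptB_N$: to a facet $F$ one assigns the stalk $\mathcal V(F) := \pi(e^r_F)(V_\pi) = V_\pi^{\Gk_{F,r^+}}$, and to an inclusion $E \subset F$ one assigns the natural inclusion $\mathcal V(F) \hookrightarrow \mathcal V(E)$ furnished by Lemma \ref{bf-prelim-ms}(ii). The differential \eqref{boundary-resolutions-ms} then becomes the standard simplicial differential for this coefficient system, and the augmentation $W^{r,0}_\pi \to V_\pi$ is the obvious sum map over vertices. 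Since $\ScptB_N$ is contractible, one expects exactness, and one now must convert this topological input into a chain-level statement.

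The core argument is a local-to-global reduction. Given any cycle $c \in W^{r,k}_\pi$, its support is a finite subcomplex $X \subset \ScptB_N$, and there is a compact open subgroup $H \subset \Gk$ fixing every value of $c$. The fixed-point subcomplex $\ScptB_N^H$ is nonempty and convex, hence contractible; after enlarging $X$ if necessary one may arrange $X \subseteq \ScptB_N^H$ to be a contractible finite subcomplex still supporting a bounding chain for $c$. On $X$, one constructs a simplicial contracting homotopy for the constant-coefficient augmented chain complex (e.g.\ by coning over a chosen vertex $x_0$ of $X$), then transfers it to a bounding chain for $c$ by lifting each elementary move through the inclusions $\mathcal V(E) \hookrightarrow \mathcal V(F)$; exactness at $W^{r,0}_\pi \to V_\pi$ uses precisely the hypothesis that the augmentation is surjective.

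The main obstacle is the compatibility of the transferred homotopy with the non-constant coefficient system $\mathcal V$: the restriction maps $\mathcal V(E) \to \mathcal V(F)$ for $F \subset E$ are only inclusions, not isomorphisms, so a naive pullback of the simplicial contracting homotopy need not land in the correct stalk at each simplex. This is resolved by building the homotopy radially with respect to a chosen base vertex and, at each inductive step, using $H$-invariance to certify that the constructed bounding chain indeed lies in the smaller stalk $V_\pi^{\Gk_{F,r^+}}$. This combinatorial bookkeeping, carried out in [\reSSa] (with the ordinary simplicial structure and integral $r$) and adapted in [\reBS] to the refined building $\ScptB_N$, is the technical heart of the argument and is what we would invoke to complete the proof.
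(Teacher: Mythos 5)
The paper offers no proof of this theorem: it is an imported result, and the text immediately following the statement simply attributes the $N=1$ case to [\reSSa] and the general case to [\reBS]. Your outline (projectivity via the compact-induction description \eqref{induced-resolutions-ms}, exactness via the coefficient system $F \mapsto V_{\pi}^{\Gk_{F,r^{+}}}$ on the contractible complex $\ScptB_{N}$ and a contracting homotopy compatible with the inclusions of Lemma \ref{bf-prelim-ms}(ii)) is a faithful sketch of how those references argue, and you correctly defer the genuinely hard step --- the construction of a chain homotopy respecting the non-constant stalks --- to them, which is exactly what the paper itself does.
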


\medskip

We remark that Schneider--Stuhler formulated and proved Theorem \ref{ssbs-resolutions-ms} for $N=1$ in [{\reSSa}].  Bestvina--Savin proved Theorem \ref{ssbs-resolutions-ms} for general $N$ in [{\reBS}].

\bigskip

Suppose $(F , \chi )$ is a cuspidal pair of depth $r \in {\frac{1}{N}}{\bN_{+}}$, and let ${\mathcal C} = \{ (F', {\chi}')$ the equivalence class of pairs associate to $(F , \chi )$.   \ Let ${\mydistD}^{\mathcal C}$ be the Bernstein center idempotent of Theorem \ref{main-convolve-ms}.  Application of ${\mydistD}^{\mathcal C}$ to the resolution of Theorem \ref{ssbs-resolutions-ms} gives a resolution:

$$
0 \xrightarrow{\hskip 0.20in} {\mydistD}^{\mathcal C} \, . \, W^{r,\ell}_{\pi} \xrightarrow{\hskip 0.10in \partial \hskip 0.10in}  {\mydistD}^{\mathcal C} . \, W^{r,(\ell -1)}_{\pi} \xrightarrow{\hskip 0.10in \partial \hskip 0.10in}  \cdots   \xrightarrow{\hskip 0.10in \partial \hskip 0.10in} {\mydistD}^{\mathcal C} . \, W^{r,1}_{\pi} \xrightarrow{\hskip 0.10in \partial \hskip 0.10in} {\mydistD}^{\mathcal C} . \,  W^{r,0}_{\pi}  \ \xrightarrow{\hskip 0.20in} {\mydistD}^{\mathcal C} \, . \, V_{\pi} \ .
$$ 

\noindent Via the definitions \eqref{defn-a-resolutions-ms} and \eqref{defn-b-resolutions-ms}, and statement (iii) of Theorem \ref{main-convolve-ms}, the $\Gk$-module {\,}${\mydistD}^{\mathcal C} . W^{r,k}_{\pi}${\,} has the description 

\smallskip

$$
\aligned
{\mydistD}^{\mathcal C} . \, W^{r,k}_{\pi} \ &= \  {\underset 
{\text{\rm{\tiny $\begin{matrix} {\mathcal K} \\ {\dim ({\mathcal K}) = k} \end{matrix}$}}} \bigoplus} {\mydistD}^{\mathcal C} . \, W^{r , {\mathcal K}}_{\pi } \ \\
&= \  {\underset 
{\text{\rm{\tiny $\begin{matrix} {\mathcal K} \\ {\dim ({\mathcal K}) = k} \end{matrix}$}}} \bigoplus}  
{\text{\rm{\small{ $\begin{matrix} 
\{ \ (J , v) \ | \ J \in {\mathcal K} \ , \ v_{J} \, \in \, \pi ( \, e^{\mathcal C}_{J} \, ) (V_{\pi} ) \ \ {\begin{matrix} \ \\ \ \end{matrix}} \ . \\
{\text{\rm{and $v_{J} = 0$ for almost all $J$}}} \ \}
\end{matrix}$ }}}}
\endaligned
$$

\medskip

We define 
\begin{equation}\label{defn-c-resolutions-ms}
\aligned
W^{{\mathcal C},k}_{\pi} \ :&= \  {\underset 
{\text{\rm{\tiny $\begin{matrix} {\mathcal K} \\ {\dim ({\mathcal K}) = k} \end{matrix}$}}} \bigoplus}  
{\text{\rm{\small{ $\begin{matrix} 
\{ \ (J , v) \ | \ J \in {\mathcal K} \ , \ v_{J} \, \in \, \pi ( \, e^{\mathcal C}_{J} \, ) (V_{\pi} ) \ \ {\begin{matrix} \ \\ \ \end{matrix}} \ . \\
{\text{\rm{and $v_{J} = 0$ for almost all $J$}}} \ \} \ \ ,
\end{matrix}$ }}}}
\endaligned
\end{equation}

\medskip

\noindent and note that $\dim (W^{{\mathcal C},k}_{\pi}) = 0$ unless $k \le \dim (F)$.  Then, in summary: 

\begin{prop}\label{new-resolutions-ms} \quad  Under the assumptions of section \eqref{a-convolve-ms}, and ${\mathcal C} = \{ (F,\chi ) \}$ the equivalence class of a cuspidal pair $(F,\chi)$ of depth $r$, let $L = dim (F)$.   Assume a $\Gk$-module $V_{\pi}$ is generated by the subspaces $\pi (e^{\mathcal C}_{x_{0}}) (V_{\pi})$ as $x_0$ runs over the vertices of $\ScptB (\Gk )_{N}$, i.e., the augmentation map is surjective.  Then, 
$$
0 \xrightarrow{\hskip 0.40in} W^{{\mathcal C},L}_{\pi} \xrightarrow{\hskip 0.20in \partial \hskip 0.20in}  W^{{\mathcal C},(L -1)}_{\pi} \xrightarrow{\hskip 0.20in \partial \hskip 0.20in}  \cdots   \xrightarrow{\hskip 0.20in \partial \hskip 0.20in} W^{{\mathcal C},1}_{\pi} \xrightarrow{\hskip 0.20in \partial \hskip 0.20in} W^{{\mathcal C},0}_{\pi}  \ \xrightarrow{\hskip 0.40in} V_{\pi}
$$ 
\noindent is a (projective) resolution of $V_{\pi}$.
\end{prop}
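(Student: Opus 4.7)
The plan is to apply the Bernstein-center idempotent $\mydistD^{\mathcal C}$ of Theorem \ref{main-convolve-ms} termwise to the Schneider--Stuhler/Bestvina--Savin resolution of Theorem \ref{ssbs-resolutions-ms}. First I would verify that the hypothesis of the proposition has two consequences: (a) since $e^{\mathcal C}_{x_0} = \mydistD^{\mathcal C} \star e^{r}_{x_0}$ by Theorem \ref{main-convolve-ms}(iii), we have $\pi(e^{\mathcal C}_{x_0})(V_\pi) \subseteq \pi(e^{r}_{x_0})(V_\pi)$, so the generation hypothesis of Theorem \ref{ssbs-resolutions-ms} is automatically satisfied; (b) because $\mydistD^{\mathcal C}$ is $\Gk$-invariant, $\pi(\mydistD^{\mathcal C})(V_\pi)$ is a $\Gk$-subrepresentation of $V_\pi$ that contains every generator $\pi(e^{\mathcal C}_{x_0})(V_\pi)$, so $\pi(\mydistD^{\mathcal C})$ acts as the identity on all of $V_\pi$.

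Theorem \ref{ssbs-resolutions-ms} then furnishes a projective resolution $W^{r,\bullet}_\pi \twoheadrightarrow V_\pi$, to which I apply $\pi(\mydistD^{\mathcal C})$ termwise. Because $\mydistD^{\mathcal C}$ is an essentially compact $\Gk$-invariant idempotent distribution (Theorem \ref{main-convolve-ms}(i), (ii), (v.2)), the induced endomorphism $\pi(\mydistD^{\mathcal C})$ on any smooth $\Gk$-module is a $\Gk$-equivariant projection, yielding a direct-sum decomposition $V \, = \, \pi(\mydistD^{\mathcal C})V \, \oplus \, \pi(1-\mydistD^{\mathcal C})V$; the functor $V \mapsto \pi(\mydistD^{\mathcal C})V$ is therefore exact on smooth representations. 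Using Theorem \ref{main-convolve-ms}(iii) termwise I get $\pi(\mydistD^{\mathcal C})(\pi(e^{r}_{K})V_\pi) = \pi(e^{\mathcal C}_{K})V_\pi$, and summing over orbits of dimension $k$ gives $\pi(\mydistD^{\mathcal C})(W^{r,k}_\pi) = W^{{\mathcal C},k}_\pi$, while $\pi(\mydistD^{\mathcal C})(V_\pi) = V_\pi$ by the first paragraph.

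For the truncation at $L = \dim(F)$, every facet of ${\mathcal F}({\mathcal C})$ has dimension $L$, so for $\dim(K) > L$ the set $\mystar_{\mathcal C}(K)$ is empty and $e^{\mathcal C}_{K} = 0$ by \eqref{idempotent-pontryagin-ms}, whence $W^{{\mathcal C},k}_\pi = 0$ in those degrees. Projectivity of each surviving $W^{{\mathcal C},k}_\pi$ is inherited from $W^{r,k}_\pi$: it is the image of the idempotent $\pi(\mydistD^{\mathcal C})$ acting on the projective smooth $\Gk$-module $W^{r,k}_\pi$, hence a direct summand of a projective, hence projective.

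The main technical point requiring care is the exactness of $V \mapsto \pi(\mydistD^{\mathcal C})V$ on smooth $\Gk$-modules, which rests on the fact that $\mydistD^{\mathcal C}$ acts via a genuine $\Gk$-equivariant idempotent endomorphism of each smooth representation. This in turn reduces to the idempotency and essential compactness of $\mydistD^{\mathcal C}$ as a distribution in the Bernstein center, both of which are already established in Theorem \ref{main-convolve-ms}(ii), (v.2); once those are in hand, the present proposition becomes a formal consequence of Theorems \ref{ssbs-resolutions-ms} and \ref{main-convolve-ms}.
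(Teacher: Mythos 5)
Your proposal is correct and follows essentially the same route as the paper: apply the Bernstein-center idempotent $\mydistD^{\mathcal C}$ of Theorem \ref{main-convolve-ms} termwise to the Schneider--Stuhler/Bestvina--Savin resolution, identify $\pi(\mydistD^{\mathcal C})(W^{r,k}_{\pi})$ with $W^{{\mathcal C},k}_{\pi}$ via part (iii) of that theorem, and note the vanishing in degrees above $L=\dim(F)$. Your write-up in fact supplies a few details the paper leaves implicit (that the generation hypothesis forces $\pi(\mydistD^{\mathcal C})=\myId_{V_{\pi}}$, and the exactness/projectivity bookkeeping), but the argument is the same.
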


\medskip

\begin{cor}  Under the assumptions of section \eqref{a-convolve-ms}, suppose $(\pi , V_{\pi})$ is an irreducible smooth representation of $\Gk$ and {\,}$(F, \chi )${\,} is a cuspidal pair with $F \subset \ScptB (\Gk )_{N}$, so that {\,}$V^{\chi}_{\pi} \ne \{ 0 \}$.  Then, $V_{\pi}$ has cohomological dimension $\le \dim (F)$.
\end{cor}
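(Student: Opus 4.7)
The plan is to verify that $V_\pi$ satisfies the hypothesis of Proposition \ref{new-resolutions-ms} \emph{for the associate class} $\mathcal C$ of $(F,\chi)$; the resolution produced there has length $L = \dim(F)$, which then bounds the cohomological dimension. The argument breaks into three pieces.

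\textbf{Step 1: $\pi(\mydistD^{\mathcal C}) = \mathrm{Id}_{V_\pi}$.} The hypothesis $V^\chi_\pi \neq \{0\}$ implies $\pi(e_{F,\chi})V_\pi \neq \{0\}$. Theorem \ref{main-convolve-ms}(iii.2) gives $\mydistD^{\mathcal C} \star e_{F,\chi} = e_{F,\chi}$, hence $\pi(\mydistD^{\mathcal C}) \pi(e_{F,\chi}) = \pi(e_{F,\chi})$, so $\pi(\mydistD^{\mathcal C})$ is nonzero on $V_\pi$. Because $\mydistD^{\mathcal C}$ is $\Gk$-invariant, essentially compact, and idempotent (Theorem \ref{main-convolve-ms}(i), (ii), (v.2)), the operator $\pi(\mydistD^{\mathcal C})$ is a $\Gk$-equivariant idempotent on the irreducible module $V_\pi$, so by Schur's lemma it must equal $\mathrm{Id}_{V_\pi}$.

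\textbf{Step 2: Schneider--Stuhler/Bestvina--Savin applies at level $r$.} Since $\chi$ is trivial on $\Gk_{F,r^+}$, the inclusion $V^\chi_\pi \subseteq V^{\Gk_{F,r^+}}_\pi$ shows $V^{\Gk_{F,r^+}}_\pi \neq \{0\}$, so $\rho(\pi) \leq r$. Choose a vertex $v_0$ in the closure of $F$ in $\ScptB_N$; by Lemma \ref{bf-prelim-ms}(ii), $\Gk_{v_0,r^+} \subset \Gk_{F,r^+}$, whence $V^{\Gk_{v_0,r^+}}_\pi \neq \{0\}$. Irreducibility of $\pi$ then forces $V_\pi$ to be generated as a $\Gk$-module by the subspaces $\pi(e^r_{x_0})V_\pi$ as $x_0$ ranges over vertices of $\ScptB_N$; thus the augmentation map in Theorem \ref{ssbs-resolutions-ms} is surjective.

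\textbf{Step 3: Transfer to the $e^{\mathcal C}_{x_0}$-generation.} For every vertex $x_0$, Theorem \ref{main-convolve-ms}(iii) gives $\mydistD^{\mathcal C} \star e^r_{x_0} = e^{\mathcal C}_{x_0}$. Applying $\pi(\mydistD^{\mathcal C}) = \mathrm{Id}$ to the generating set from Step 2, and using $\Gk$-invariance of $\mydistD^{\mathcal C}$ to commute $\pi(\mydistD^{\mathcal C})$ past each $\pi(g)$, one obtains
\[
V_\pi \ = \ \pi(\mydistD^{\mathcal C})\,V_\pi \ = \ \mathrm{span}_{\mathbb C}\bigl\{\pi(g)\,\pi(e^{\mathcal C}_{x_0})\,v \ : \ g \in \Gk,\ x_0 \text{ a vertex},\ v \in V_\pi\bigr\}.
\]
Thus the hypothesis of Proposition \ref{new-resolutions-ms} is met, and we obtain a projective resolution of $V_\pi$ of length $L = \dim(F)$. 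Consequently the cohomological (projective) dimension of $V_\pi$ is at most $\dim(F)$.

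\textbf{Anticipated obstacle.} Every step uses only already-established machinery; the one point requiring a little care is confirming that applying the essentially compact distribution $\mydistD^{\mathcal C}$ commutes with taking $\Gk$-spans in Step 3, which follows directly from its $\Gk$-invariance together with the key identity (iii) of Theorem \ref{main-convolve-ms}. No further input appears necessary.
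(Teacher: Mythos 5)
Your proposal is correct and follows the same overall strategy as the paper: verify the generation hypothesis of Proposition \ref{new-resolutions-ms} for the class $\mathcal C$ and read the bound off the length $L=\dim(F)$ of the resulting resolution. The paper reaches the generation hypothesis more directly — it observes that $V^{\chi}_{\pi}\subset \pi(e^{\mathcal C}_{x_0})(V_\pi)$ for any vertex $x_0$ of $F$, since every character of $\Gk_{x_0,r}$ occurring in $V^{\chi}_{\pi}$ is parabolically inflated from $(F,\chi)$ and hence lies in $\chi(x_0)$, so irreducibility immediately gives generation — whereas your Steps 1--3 arrive at the same point via $\pi(\mydistD^{\mathcal C})=\mathrm{Id}_{V_\pi}$ (Schur) and the identity $\mydistD^{\mathcal C}\star e^{r}_{x_0}=e^{\mathcal C}_{x_0}$ of Theorem \ref{main-convolve-ms}(iii); both routes are valid.
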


\begin{proof} \quad Let ${\mathcal C}$ denote the equivalence class of the cuspidal pair $(F, \chi)$.  The hypothesis {\,}$V^{\chi}_{\pi} \ne \{ 0 \}$ means $\pi (e^{\mathcal C}_{x_{0}} ) (V_{\pi} ) \ne \{ 0 \}$ for any vertex $x_{0}$ of $F$, and so by the irreducibility hypothesis on $V_{\pi}$, it is generated by $\pi (e^{\mathcal C}_{x_{0}} ) (V_{\pi} ) $.  The resolution of $V_{\pi}$ provided by ${\mathcal C}$ has length $L$; thus $V_{\pi}$ has cohomological dimension $\le \dim (F)$.

\end{proof}
 
\vskip 0.70in 
 

\section{Nonsplit groups}\label{nonsplit}

\medskip

In this section, we explain the minor modifications needed in the proofs under our earlier assumption that $\mk$-defined group $\mG$ is split quasisimple to when it is connected, absolutely quasisimple (possibly non-split).   We assume $\mG$ is connected, absolutely quasisimple.  Set $\Gk = \mG (\mk)$.    We follow [{\reBCM}:{\S}{6}]. \ (We correct here a misstatement in the displayed line following (6.1.2) of [{\reBCM}], namely that the constant parts of affine roots do not run over $\bZ$ but rather ${\frac{1}{R}} \bZ$  for some positive integer $R$).

\medskip

Let $\mk^{un}$ be the maximal unramified extension of $\mk$, and let $\myGal (\mk^{un}/\mk)$ denote the Galois group.  Since $\mG$ is assumed to be absolutely quasisimple, the Bruhat--Tits building $\ScptB (\mG (\mk^{un} ))$ is a simplicial complex, with simplicial actions by $\mG (\mk^{un})$ and $\myGal (\mk^{un} /\mk )$.   The building $\ScptB (\Gk )$ is the $\myGal (\mk^{un}/\mk)$-fixed points of $\ScptB (\mG (\mk^{un} ))$.    

\smallskip

The refined building $\ScptB_{N}$ is also be defined in terms of a set of virtual affine roots.  Take a torus $\SamS$ defined over $\mk$ satisfying{\,}: \ (i) {\,}$\SamS$ is a maximal split $\mk^{un}$-torus, \ and \ (ii) {\,}  $\mS \, := \, \SamS^{\myGal (\mk^{un} / \mk )}$ is a maximal split $\mk$-torus.  
Set $\ScptA =  \ScptA (\SamS (\mk^{un}))^{\myGal (\mk^{un} /\mk )}$, an apartment in $\ScptB$.  The affine 
root system $\Psi$ on $\ScptA$ is the nonconstant restrictions to $\ScptA$ of affine roots in $\Psi( \SamS (\mk^{un} ))$. 
The root system $\Phi$ of $\Gk$ with respect to $\mS$ is  the set of gradients of affine roots.  
Let $x_0$ in $\ScptA$ be a special point such that $\Phi_{x_0}$, the set of gradients of affine roots vanishing at $x_0$, contains all short roots in $\Phi$. Note that the group of  automorphisms of the affine Dynkin diagram acts transitively on such special vertices (see tables in [{\reTa}:{\S}4]).
Consider $\ScptA$ a vector space with $x_0$ its origin. For every $\psi \in \Psi$ define an affine functional 
\[ 
\psi_N(x) :=\frac{1}{N} \psi(Nx). 
\] 
Let $\Psi_N$ be the set of these affine functionals. This set is independent of the choice of $x_0$.   In terms of the graphs of affine functionals in $\ScptA \times \bR$, the graphs of the functionals in $\Psi_N$ are obtained by taking two parallel graphs of affine roots in $\Psi$, and adding $(N-1)$ equally spaced parallel graphs.  Denote by $\ScptA_{N}$ the refined simplicial decomposition of $\ScptA$ by the zero loci of the virtual affine roots $\Psi_{N}$, and by $\ScptB_{N}$ the resulting refined simplicial decomposition of $\ScptB$.  Observe that if $\psi\in \Psi$ then $\psi+1\in \Psi$. Indeed, this is true for affine roots of the quasi-split groups $\mG (\mk^{un})$, by observation (see the example of odd unitary group below), and therefore for $\ScptG$ by restriction.   In particular, it is clear that $\psi_{N} +1/N \in \Psi_N$.  If $F$ is a facet of $\ScptB_{N}$, and $r \in {\frac{1}{N}} \bN_{\ge 0}$, then  
$$
\forall \ x \, , \, y \ \in \ {\text{\rm{interior of}}} \ F \ \ : \quad \Gk_{x,r} \ = \ \Gk_{y,r} \quad {\text{\rm{and}}} \quad \Gk_{x,r^{+}} \ = \ \Gk_{y,r^{+}} \ . 
$$

\noindent {\sc{Example.}} \ Assume $p\neq 2$. \ Take $\mG = \mS \mU_{2n+1}$ to be the special unitary group over a ramified quadratic extension.  Here $\ScptA=\mathbb R^{n}$  and $\Psi$ 
 has simple roots $e_1-e_2, \ldots e_{n-1}-e_n, e_n$ and $1/2-2e_1$. 
 The affine roots are $2e_i +c +1/2$, $\pm e_i\pm e_j +c/2$, $i\neq j$, 
  and $e_i + c/2$ where $c$ is an integer. The root system $\Phi$ is of the type $BC_n$. The affine chamber corresponding to the given set of simple 
  roots has two special vertices, $x_0=(0, \ldots , 0)$ and $y_0=(\frac14 , \ldots , \frac14)$. Observe that $\Phi_{x_0}$ has type $B_n$, 
  while $\Phi_{y_0}$ has type $C_n$, thus we use $x_0$ to define virtual affine roots. If $N=2$ these are 
   $2e_i +c +1/4$, $\pm e_i\pm e_j +c/2$, $i\neq j$, 
  and $e_i + c/2$ where $c$ is half-integer.   \ In particular, we note the refined simplicial structure of $\ScptB_{2}$ is that of $\ScptB$.  This is caused by the root system $\Phi$ being nonreduced.

\smallskip

An important difference between the spilt and non-split case is that the set of affine roots may be non-reduced. An affine root $\psi\in \Psi$ is called divisible if $\psi/2$ is also an affine root (in which case the two affine roots obviously have the same zero locus).  In the context of Lemma \ref{facet-simple-roots-keyprop-ms}, we take the root $\alpha_F$, perpendicular to the face $F$ of a chamber, to be the gradient of a non-divisible root vanishing on $F$. With this convention, the formulations and proofs for the split case then hold for the non-split case too.


\vskip 0.70in 
 

\section{Acknowledgments}

\medskip

The first author thanks the University of Utah Mathematics Department for its hospitality in 2018 and 2019 when parts of this work was done.  The first author thanks Dan Barbasch, Dan Ciubotaru, Dragan Mili{\v{c}}i{\'{c}} and Fiona Murnaghan for useful conversations.

 
\vskip 0.70in 
 
\hrule

\smallskip

\hrule

\vskip 0.30in

\begin{multicols}{2}
[
\section{Index of notation}
]
\bigskip

$A_{E,F}$, $A_{F}$ \ \hfill  \eqref{decomp-2-pontryagin-ms}

\medskip

$\alpha_{F}$  \hfill  \eqref{definition-a-keyprop-ms}

\medskip

$\myBall (C_{0},m)$  \hfill  \eqref{dist-ball-convolve-ms}

\medskip

$C(E,F)$ \hfill \eqref{c-prelim-ms},  \eqref{ccc-prelim-ms}

\medskip

$c_{C_{0}}(D)$  \hfill  \eqref{child-parent-keypro-ms}

\medskip

${\ScptC}_{C_{0}}(F)$ \hfill \eqref{unique-convolve-ms}

\medskip

$D_{E}$ \hfill \eqref{facet-prelim-ms}

\medskip

${\text{\rm{dist}}}_{\ScptA}$, ${\text{\rm{dist}}}_{\Delta}$  \hfill \eqref{lemma-a-convolve-ms}

\medskip 

${\mydistD}^{\mathcal C}$, $P^{\my, {\mathcal C}}$  \hfill  \eqref{distribution-convolve-ms},  \eqref{ep-sum-intro-ms}

\medskip


$\Delta_{C}(K)$  \hfill  \eqref{min-para-ep-ms}

\medskip

$e_{J}$  \hfill  \eqref{haar-prelim-ms}

\medskip

$e_{E,A}$ \hfill  \eqref{idem-char-pontryagin-ms}

\medskip

$e^{\my, {\mathcal C}}_{D}$  \hfill \eqref{idempotent-pontryagin-ms}

\medskip

$e^{r}_{K}$  \hfill  \eqref{definition-a-keyprop-ms}

\medskip

$H_\xi$ \hfill \eqref{hyperplane-defn-prelim-ms}

\medskip

$P_{\le r}$ \hfill \eqref{depth-ler-projector-intro-ms}

\medskip

$p_{C_{0}}(D)$  \hfill  \eqref{child-parent-keypro-ms}

\medskip

$\partial$  \hfill  \eqref{boundary-resolutions-ms}

\medskip

$S(C_{0},\Delta )$  \hfill  \eqref{definition-b-keyprop-ms}

\medskip

$\mystar_{\mathcal C}(D)$  \hfill \eqref{local-star-pontryagin-ms}

\medskip

$\Phi (\mS)$, $\Phi$ \hfill \eqref{hyperplane-defn-prelim-ms}

\medskip

$\Phi_{E}$ \hfill \eqref{constant-prelim-ms}

\medskip

$\chi (D)$ \hfill \eqref{idempotent-pontryagin-ms} 

\medskip

$\Psi (\mS)$, $\Psi$ \hfill \eqref{hyperplane-defn-prelim-ms}

\medskip

$\Psi (\mS)_{N}$ \hfill \eqref{virtualn-prelim-ms}

\medskip

$V^{*}$, $\widehat{V}$ \hfill \eqref{direct-pontryagin-ms}

\medskip

$V^{r}_{E}$, $V^{r}_{E,F}$ \hfill \eqref{direct-pontryagin-ms}

\medskip

${\myVsph}(E)$  \hfill  \eqref{virtual-sphere-prelim-ms}

\medskip

$\myVsph_{C} (K)$  \hfill  \eqref{a-ep-ms}

\medskip

$W^{r,k}_{\pi}$ \hfill \eqref{resolution-b-intro-ms}, \eqref{defn-b-resolutions-ms}

\end{multicols}

\vfill
 
\vskip 0.70in 
 

\vfill
\vfill

\vfill
{\small{

{\tsc{Department of Mathematics, The Hong Kong University of Science and Technology, Clear Water Bay Road, Hong Kong, Email:{,}}}{\tt{amoy{\char'100}ust.hk}}

\vskip 0.10in
{\tsc{Department of Mathematics, University of Utah, Salt Lake City, UT 84112, USA}}

\noindent {\tsc{Email:{\,}}}{\tt{savin{\char'100}math.utah.edu}}

}}

\eject

\vfill
 
}} 
 
\end{document}